\documentclass[12pt]{amsart}
\usepackage{amssymb}
\usepackage{amsmath}
\usepackage{amsthm}
\usepackage{times}
\usepackage{graphicx}

\usepackage[curve,all]{xy}
 \xyoption{curve}
 \xyoption{line}
\xyoption{arc}


\usepackage{color}
\usepackage{amsthm}
\newtheorem{theorem}{Theorem}[section]
\newtheorem{lemma}[theorem]{Lemma}
\newtheorem{corollary}[theorem]{Corollary}

\newtheorem{prop}[theorem]{Proposition}

\theoremstyle{definition}

\theoremstyle{remark}
\newtheorem{remark}[theorem]{Remark}

\numberwithin{equation}{section}

\newcommand{\calE}{\mathcal{E}}

\newcommand{\calO}{\mathcal{O}}

\newcommand{\calS}{\mathcal{S}}

\def\Aut{{\text{Aut}}}

\def\Ker{{\text{Ker}}}
\def\Im{{\text{Im}}}
\def\O{{\text{O}}}

\def\Num{{\rm{Num}}}

\def\deg{{\text{deg}}}

\def\Num{{\text{Num}}}
\def\Km{{\text{Km}}}

\def\I{{\text{I}}}
\def\II{{\text{II}}}
\def\III{{\text{III}}}
\def\IV{{\text{IV}}}
\def\V{{\text{V}}}
\def\VI{{\text{VI}}}
\def\VII{{\text{VII}}}
\begin{document}
\title [Enriques surfaces] {On Enriques surfaces in characteristic 2 with a finite group of automorphisms}

\author{Toshiyuki Katsura}
\address{Faculty of Science and Engineering, Hosei University,
Koganei-shi, Tokyo 184-8584, Japan}
\email{toshiyuki.katsura.tk@hosei.ac.jp}

\author{Shigeyuki Kond\=o}
\address{Graduate School of Mathematics, Nagoya University, Nagoya,
464-8602, Japan}
\email{kondo@math.nagoya-u.ac.jp}
\thanks{Research of the first author is partially supported by
Grant-in-Aid for Scientific Research (B) No. 15H03614, and the second author by (S) No. 15H05738.}

\begin{abstract}
Complex Enriques surfaces with a finite group of automorphisms are
classified into seven types.  In this paper, we determine which types of such Enriques surfaces exist in characteristic 2.  In particular we give a one dimensional family of
classical and supersingular Enriques surfaces with the automorphism group $\Aut(X)$ isomorphic to the symmetric group $\mathfrak{S}_5$ of degree five. 
\end{abstract}

\maketitle

\section{Introduction}\label{sec1}

We work over an algebraically closed field $k$ of characteristic 2.
Complex Enriques surfaces with a finite group of automorphisms are completely
classified into seven types.  The main purpose of this paper 
is to determine which types of such Enriques surfaces exist in characteristic 2.
Recall that, over the complex numbers, a generic Enriques surface has an infinite group of automorphisms (Barth and Peters \cite{BP}).  On the other hand, Fano \cite{F} gave an Enriques surface with a finite group of automorphisms.  
Later Dolgachev \cite{D1} gave another example of such Enriques surfaces.  Then Nikulin \cite{N} proposed a classification of such Enriques surfaces in terms of the periods.  Finally the second author \cite{Ko} classified all complex Enriques surfaces
with a finite group of automorphisms, geometrically.  There are seven types ${\I, \II,\ldots, \VII}$ of such Enriques surfaces.  The Enriques surfaces of type ${\I}$ or 
${\II}$ form an irreducible one dimensional family, and each of the remaining types
consists of a unique Enriques surface.  
The first two types contain exactly twelve nonsingular rational curves, on the other hand, the remaining five types contain exactly twenty nonsingular rational curves.
The Enriques surface of type ${\I}$ (resp. of type ${\VII}$) is the example given 
by Dolgachev (resp. by Fano).   We call the dual graphs of all nonsingular rational curves on the Enriques surface of type $K$ the dual graph of type $K$ ($K = {\I, \II,..., \VII}$).  
 
In positive characteristics, the classification problem of Enriques surfaces with a finite group of automorphisms is still open.  Especially the case of characteristic 2 is most interesting.  In the paper \cite{BM2}, Bombieri and Mumford classified
Enriques surfaces in characteristic 2 into three classes, namely, singular, classical and supersingular Enriques surfaces.  
As in the case of characteristic $0$, an Enriques surface
$X$ in characteristic 2 has a canonical double cover
$\pi : Y \to X$, which is a separable ${\bf Z}/2{\bf Z}$-cover,
a purely inseparable $\mu_2$- or $\alpha_2$-cover according to $X$ being singular, classical or supersingular. The surface $Y$ might have singularities, but it is $K3$-like in the sense that its dualizing sheaf is trivial. 

In this paper we consider the following problem:
{\it does there exist an Enriques surface in characteristic $2$ with a finite group of automorphisms whose dual graph of all nonsingular rational curves is of type ${\rm I, II,..., VI}$ or ${\rm VII}$ $?$}  Note that if Enriques surface $S$ in any characteristic has the dual graph of
type $K$ ($K={\rm I, II,..., VII}$), then the automorphism group ${\rm Aut}(S)$ is finite by Vinberg's criterion (see Proposition \ref{Vinberg}).

We will prove the following Table \ref{Table1}:

\begin{table}[!htb]\label{}
{\offinterlineskip
\halign{\strut\vrule#&\quad\hfil\rm#\hfil\quad&&
\vrule#&\quad#\hfil\quad\cr\noalign{\hrule}
& {\rm Type}&&$\I$ && $\II$ && $\III$&&$\IV$&&$\V$ && $\VI$ &&$\VII$&\cr
\noalign{\hrule}
& {\rm singular} && {$\bigcirc$} && {$\bigcirc$} && {$\times$} && {$\times$} && {$\times$} && {$\bigcirc$} && {$\times$} &\cr
\noalign{\hrule}
& {\rm classical} && {$\times$} && {$\times$} && {$\times$} && {$\times$} && {$\times$} && {$\times$} && {$\bigcirc$} &\cr
\noalign{\hrule}
& {\rm supersingular} && {$\times$} && {$\times$} && {$\times$} && {$\times$} && {$\times$} && {$\times$} && {$\bigcirc$} &\cr
\noalign{\hrule}
}}
\
\caption{}
\label{Table1}
\end{table}

\noindent
In Table \ref{Table1}, $\bigcirc$ means the existence and $\times$ means the non-existence
of an Enriques surface with the dual graph of type $\I,..., \VII$.  

In case of type ${\I, \II, \VI}$, the construction of such Enriques surfaces over the complex numbers works well
in characteristic 2 (Theorems \ref{Ithm}, \ref{IIthm}, \ref{VIthm}).  The most difficult and interesting case is of type ${\VII}$.
We give a 1-dimensional family of classical 
and supersingular Enriques surfaces with a finite group of automorphisms whose dual graph is of type ${\VII}$ (Theorems \ref{main}, \ref{main2}).  We remark that this family is non-isotrivial (Theorem \ref{non-isotrivial}).
Recently the authors \cite{KK} gave a one dimensional family
of classical and supersingular Enriques surfaces which contain a remarkable forty divisors, by using a theory
of Rudakov and Shafarevich \cite{RS} on purely inseparable covers of surfaces.  We employ here the same method
to construct the above classical and supersingular Enriques surfaces with the dual graph of type ${\VII}$.

It is known that
there exist Enriques surfaces in characteristic 2 with a finite group of automorphisms
whose dual graphs of all nonsingular rational curves do not appear 
in the case of complex surfaces
(Ekedahl and Shepherd-Barron\cite{ES}, Salomonsson\cite{Sa}).  See Remark \ref{extra}.
The remaining problem of the classification of Enriques surfaces in characteristic 2 with 
a finite group of automorphisms is to determine such Enriques surfaces appeared only 
in characteristic 2.

The plan of this paper is as follows.  In section \ref{sec2}, we recall the known results
on Rudakov-Shafarevich's theory on derivations, lattices and Enriques surfaces.
In section \ref{sec3}, we give a construction of a one dimensional family of classical
and supersingular Enriques surfaces with the dual graph of type $\VII$.
Moreover we show the non-existence of singular Enriques surfaces with the dual graph of type ${\rm VII}$ (Theorem \ref{non-existVII}).
In section \ref{sec4}, we discuss other cases, that is, the existence of singular Enriques surfaces of type $\I, \II, \VI$ and the non-existence of other cases
(Theorems \ref{Ithm}, \ref{non-existI}, \ref{IIthm}, \ref{non-existII}, \ref{VIthm},
\ref{non-existVI}, \ref{non-existIII}).  
In appendices A and B, we give two remarks.  As appendix A, we show that 
the covering $K3$ surface of any singular Enriques surface has height $1$.  
As appendix B, we show that
for each singular Enriques surface with the dual graph of type ${\rm I}$ its canonical cover is isomorphic to the Kummer surface of the product of two ordinary elliptic curves. 

\medskip
\noindent
{\bf Acknowledgement.} The authors thank Igor Dolgachev for valuable conversations.
In particular all results in Section \ref{sec4} were obtained by discussion with him
in Soeul and Kyoto, 2014.
They thank him that he permits us to give these results in this paper.
The authors also thank Matthias Sch\"utt and Hiroyuki Ito for pointing out the non-existence of
singular Enriques surfaces with the dual graph of nonsingular rational curves of type
$\VII$.

\section{Preliminaries}\label{sec2}

Let $k$ be an algebraically closed field of characteristic $p > 0$,
and let $S$ be a nonsingular complete algebraic surface defined over $k$.
We denote by $K_{S}$ a canonical divisor of $S$.
A rational vector field $D$ on $S$ is said to be $p$-closed if there exists
a rational function $f$ on $S$ such that $D^p = fD$. 
A vector field $D$ for which $D^p=0$ is called of additive type, 
while that for which $D^p=D$ is called of multiplicative type.
Let $\{U_{i} = {\rm Spec} A_{i}\}$ be an affine open covering of $S$. We set 
$A_{i}^{D} = \{D(\alpha) = 0 \mid \alpha \in A_{i}\}$. 
Affine varieties $\{U_{i}^{D} = {\rm Spec} A_{i}^{D}\}$ glue together to 
define a normal quotient surface $S^{D}$.

Now, we  assume that $D$ is $p$-closed. Then,
the natural morphism $\pi : S \longrightarrow S^D$ is a purely
inseparable morphism of degree $p$. 
If the affine open covering $\{U_{i}\}$ of $S$ is fine enough, then
taking local coordinates $x_{i}, y_{i}$
on $U_{i}$, we see that there exsit $g_{i}, h_{i}\in A_{i}$ and 
a rational function $f_{i}$
such that the divisors defined by $g_{i} = 0$ and by $h_{i} = 0$ have no common divisor,
and such that
$$
 D = f_{i}\left(g_{i}\frac{\partial}{\partial x_{i}} + h_{i}\frac{\partial}{\partial y_{i}}\right)
\quad \mbox{on}~U_{i}.
$$
By Rudakov and Shafarevich \cite{RS} (Section 1), divisors $(f_{i})$ on $U_{i}$
give a global divisor $(D)$ on $S$, and zero-cycles defined
by the ideal $(g_{i}, h_{i})$ on $U_{i}$ give a global zero cycle 
$\langle D \rangle $ on $S$. A point contained in the support of
$\langle D \rangle $ is called an isolated singular point of $D$.
If $D$ has no isolated singular point, $D$ is said to be divisorial.
Rudakov and Shafarevich (\cite{RS}, Theorem 1, Corollary) 
showed that $S^D$ is nonsingular
if $\langle D \rangle  = 0$, i.e., $D$ is divisorial.
When $S^D$ is nonsingular,
they also showed a canonical divisor formula
\begin{equation}\label{canonical}
K_{S} \sim \pi^{*}K_{S^D} + (p - 1)(D),
\end{equation}
where $\sim$ means linear equivalence.
As for the Euler number $c_{2}(S)$ of $S$, we have a formula
\begin{equation}\label{euler}
c_{2}(S) = \deg \langle D \rangle  - \langle K_{S}, (D)\rangle - (D)^2
\end{equation}
(cf. Katsura and Takeda \cite{KT}, Proposition 2.1). 

Now we consider an irreducible curve $C$ on $S$ and we set $C' = \pi (C)$.
Take an affine open set $U_{i}$ above such that $C \cap U_{i}$ is non-empty.
The curve $C$ is said to be integral with respect to the vector field $D$
if $g_{i}\frac{\partial}{\partial x_{i}} + h_{i}\frac{\partial}{\partial y_{i}}$
is tangent to $C$ at a general point of $C \cap U_{i}$. Then, Rudakov-Shafarevich
\cite{RS} (Proposition 1) showed the following proposition:

\begin{prop}\label{insep}

$({\rm i})$  If $C$ is integral, then $C = \pi^{-1}(C')$ and $C^2 = pC'^2$.

$({\rm ii})$  If $C$ is not integral, then $pC = \pi^{-1}(C')$ and $pC^2 = C'^2$.
\end{prop}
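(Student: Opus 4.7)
The plan is to work locally at the generic point of $C$ and then globalize using the projection formula. Since $\pi:S\to S^{D}$ is a purely inseparable morphism of degree $p$, it induces a homeomorphism of underlying topological spaces; hence $\pi^{-1}(C')$ has $C$ as its unique irreducible component, and as a Cartier divisor on $S$ we may write $\pi^{-1}(C')=mC$ for some integer $m\ge 1$. The restriction $\pi|_{C}:C\to C'$ is itself finite purely inseparable of some degree $d\in\{1,p\}$. Pushing $\pi^{-1}(C')=mC$ forward by $\pi$ and using $\pi_{*}\pi^{*}=p\cdot\mathrm{id}$ on divisors yields $p\,C'=\pi_{*}(mC)=md\,C'$, so $md=p$ and therefore $(m,d)=(1,p)$ or $(p,1)$.

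To decide which case occurs I would use tangency. Pick a generic point $\xi$ of $C$; since $(g_{i})$ and $(h_{i})$ are coprime, their common zero scheme $\langle D\rangle$ is zero-dimensional, so $\xi$ lies outside the isolated singularities of $D$, and one can choose local coordinates $x,y$ at $\xi$ so that $C$ is cut out by $y=0$ and $D=f(g\partial_{x}+h\partial_{y})$. Integrality of $C$ means $g\partial_{x}+h\partial_{y}$ is tangent to $C$, i.e.\ $y\mid h$; coprimality then forces $y\nmid g$, so $g(x,0)\ne 0$, and the restriction of $g\partial_{x}+h\partial_{y}$ to $C$ is the non-zero derivation $g(x,0)\partial_{x}$ on $C$. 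In that integral case $\pi|_{C}$ factors through the quotient of $C$ by this $p$-closed non-zero derivation, which is purely inseparable of degree $p$, forcing $d=p$ and hence $m=1$. If instead $C$ is not integral, then $h(x,0)\ne 0$, the reduced derivation is transverse to $C$, and the differential $d\pi$ (whose kernel is generated by $D$) is injective on $T_{\xi}C$; then $\pi|_{C}$ is unramified at $\xi$, but it is also purely inseparable, so $d=1$ and $m=p$.

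The intersection-number statements then follow from the projection formula for the finite morphism $\pi$ of degree $p$, namely $\pi^{*}\alpha\cdot\pi^{*}\beta=p(\alpha\cdot\beta)$. In case (i), $C=\pi^{*}C'$ yields $C^{2}=p\,C'^{2}$. In case (ii), $pC=\pi^{*}C'$ yields $p^{2}C^{2}=p\,C'^{2}$, i.e.\ $pC^{2}=C'^{2}$. The principal technical obstacle is that $S^{D}$ may fail to be smooth, and $\pi$ may fail to be flat, precisely at the isolated singular points of $D$; but these points form a zero-dimensional set, so the local identification of $m$ at the generic point of $C$ is unaffected, and the global intersection identities survive because the projection formula extends to any finite surjective morphism between complete normal surfaces (or alternatively one may resolve the singular points of $S^{D}$ and argue by proper pushforward).
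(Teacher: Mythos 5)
The paper does not actually prove this proposition: it is quoted from Rudakov--Shafarevich \cite{RS}, Proposition 1, so there is no internal argument to compare yours with. Your reconstruction is essentially the standard proof and is correct in structure: the relation $md=p$ that you get from $\pi_{*}\pi^{*}=p\cdot\mathrm{id}$ and $\pi_{*}C=dC'$ is exactly the identity (ramification index)$\times$(residue degree)$=p$ for the extension of discrete valuation rings $\calO_{S^{D},\eta'}\subset\calO_{S,\eta}$ at the generic point $\eta$ of $C$, and deciding $(m,d)$ by tangency of the reduced vector field is the right mechanism. In the integral case your phrase ``factors through the quotient of $C$ by the induced derivation'' is justified by observing that the reduced field $D'=g\partial_{x}+h\partial_{y}$ (which has the same invariants as $D$) preserves $\calO_{S,\eta}$ and, by tangency, the ideal $(y)$; hence every $D$-invariant function has residue killed by the induced nonzero derivation on $k(C)$, so $k(C')\subseteq k(C)^{p}$ and $d=p$. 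You essentially say this, so that case is fine.

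The one soft spot is the parenthetical claim in the non-integral case that $\ker d\pi$ is generated by $D$ away from the isolated singular points. This is true (and for $p=2$ it is easy, since the fibre of the flat degree-$2$ map $\pi$ has length $2$ and so cannot have embedding dimension $2$), but it is a fact of roughly the same depth as the proposition itself, so invoking it without proof leaves a gap in an otherwise self-contained argument. It can be bypassed by a one-line valuation computation at $\eta$: let $t$ be a local equation of $C'$ and write $t=u\,y^{m}$ with $u$ a unit in $\calO_{S,\eta}$; applying $D'$ and using $D'(t)=0$ and $D'(y)=h$ a unit (non-tangency) gives $0=D'(u)\,y^{m}+m\,u\,h\,y^{m-1}$, whose second term has valuation exactly $m-1$ unless $p\mid m$, forcing $m=p$ and hence $d=1$. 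Your final caveat about singularities of $S^{D}$ is harmless in the paper's use of the proposition, since there $\langle D\rangle=0$, so $S^{D}$ is nonsingular and the projection formula applies directly.
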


A lattice is a free abelian group $L$ of finite rank equipped with 
a non-degenerate symmetric integral bilinear form $\langle . , . \rangle : L \times L \to {\bf Z}$. 
The signature of a lattice is the signature of the real vector space $L\otimes {\bf R}$ equipped with the symmetric bilinear form extended from the one on $L$ by linearity. A lattice is called even if 
$\langle x, x\rangle \in 2{\bf Z}$ 
for all $x\in L$. 
We denote by $U$ the even unimodular lattice of signature $(1,1)$, 
and by $A_m, \ D_n$ or $\ E_k$ the even {\it negative} definite lattice defined by
the Cartan matrix of type $A_m, \ D_n$ or $\ E_k$ respectively.    
We denote by $L\oplus M$ the orthogonal direct sum of lattices $L$ and $M$.
Let ${\rm O}(L)$ be the orthogonal group of $L$, that is, the group of isomorphisms of $L$ preserving the bilinear form.

In characteristic 2, a minimal algebaic surface with numerically trivial
canonical divisor is called an Enriques surface if the second Betti
number is equal to 10. Such surfaces $S$ are  divided into three classes
(for details, see Bombieri and Mumford \cite{BM2}, Section 3):
\begin{itemize}
\item[$({\rm i})$] $K_{S}$ is not linearly equivalent to zero 
and $2K_{S}\sim 0$.  Such an Enriques surface is called a classical Enriques surface.
\item[$({\rm ii})$] $K_{S} \sim 0$, ${\rm H}^{1}(S, {\calO}_{S}) \cong k$
and the Frobenius map acts on  ${\rm H}^{1}(S, {\calO}_S)$ bijectively.
Such an Enriques surface is called a singular Enriques surface.
\item[$({\rm iii})$] $K_{S} \sim 0$, ${\rm H}^{1}(S, {\calO}_{S}) \cong k$
and the Frobenius map is the zero map on  ${\rm H}^{1}(S, {\calO}_S)$.
Such an Enriques surface is called a supersingular Enriques surface.
\end{itemize}

Let $S$ be an Enriques surface and let $\Num(S)$ be the quotient of the N\'eron-Severi group of $S$ by torsion.  Then $\Num(S)$ together with the intersection product is
an even unimodular lattice of signature $(1,9)$ (Cossec and Dolgachev \cite{CD}, Chap. II, Theorem 2.5.1), and hence is isomorphic to $U\oplus E_8$. 
We denote by ${\rm O}(\Num(S))$ the orthogonal group of $\Num(S)$. The set 
$$\{ x \in \Num(S)\otimes {\bf R} \ : \ \langle x, x \rangle > 0\}$$ 
has two connected components.
Denote by $P(S)$ the connected component containing an ample class of $S$.  
For $\delta \in \Num(S)$ with $\delta^2=-2$, we define
an isometry $s_{\delta}$ of $\Num(S)$ by
$$s_{\delta}(x) = x + \langle x, \delta\rangle \delta, \quad x \in \Num(S).$$ 
The isometry $s_{\delta}$ is called the reflection associated with $\delta$.
Let $W(S)$ be the subgroup of
${\rm O}(\Num(S))$ generated by reflections associated with all nonsingular rational  curves on $S$.  Then $P(S)$ is divided into chambers 
each of which is a fundamental domain with respect to
the action of $W(S)$ on $P(S)$.
There exists a unique chamber containing an ample
class which is nothing but the closure of the ample cone $D(S)$ of $S$.
It is known that the natural map
\begin{equation}\label{coh-trivial}
\rho : \Aut(S) \to {\rm O}(\Num(S))
\end{equation}
has a finite kernel (Dolgachev \cite{D2}, Theorems 4, 6).  
Since the image $\Im(\rho)$ preserves the ample cone, we see $\Im(\rho) \cap W(S) = \{1\}$.
Therefore $\Aut(S)$ is finite if the index $[\O(\Num(S)) : W(S)]$ is finite.  
Thus we have the following Proposition (see Dolgachev \cite{D1}, Proposition 3.2).

\begin{prop}\label{finiteness}
If $W(S)$ is of finite index in ${\rm O}({\rm Num}(S))$, then ${\rm Aut}(S)$ is finite.
\end{prop}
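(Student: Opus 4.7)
The plan is to combine the three ingredients already collected in the paragraph preceding the statement: finiteness of $\ker(\rho)$, triviality of $\Im(\rho)\cap W(S)$, and a coset-counting argument that turns finite index of $W(S)$ into finiteness of $\Im(\rho)$.

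First I would recall why $\Im(\rho) \cap W(S) = \{1\}$. The chamber decomposition of $P(S)$ under $W(S)$ has $\overline{D(S)}$ as a fundamental domain, and any $w\in W(S)\setminus\{1\}$ sends $D(S)$ to a different chamber. On the other hand, an element $g\in \Aut(S)$ carries ample classes to ample classes, so $\rho(g)$ preserves $D(S)$. Therefore if $\rho(g)\in W(S)$, the corresponding reflection word must fix $D(S)$, hence must be trivial.

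Next I would carry out the coset count. Let $N := [\O(\Num(S)) : W(S)]$, which is finite by hypothesis. Pick coset representatives $g_1,\ldots,g_N$ for $\O(\Num(S))/W(S)$. For any two distinct elements $\sigma,\tau \in \Im(\rho)$, the product $\sigma\tau^{-1}$ lies in $\Im(\rho)\setminus\{1\}$, hence not in $W(S)$ by the previous step; so $\sigma$ and $\tau$ represent different left cosets of $W(S)$. This gives $|\Im(\rho)| \le N < \infty$.

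Finally, I would invoke the result quoted from Dolgachev \cite{D2} that $\ker(\rho)$ is finite. Combining this with the finiteness of $\Im(\rho)$ just established yields $|\Aut(S)| = |\ker(\rho)|\cdot|\Im(\rho)| < \infty$, which is the proposition.

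I do not expect a genuine obstacle here: the only subtle point is the claim $\Im(\rho)\cap W(S)=\{1\}$, and that is already essentially asserted in the text via the statement that $D(S)$ is the unique chamber containing an ample class. The rest is elementary group theory once finite index is in hand.
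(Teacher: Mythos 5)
Your proposal is correct and follows exactly the argument the paper gives in the paragraph preceding the proposition: finiteness of $\ker(\rho)$ from Dolgachev, the observation that $\Im(\rho)$ preserves the ample cone so $\Im(\rho)\cap W(S)=\{1\}$, and the conclusion that finite index of $W(S)$ forces $\Im(\rho)$, hence $\Aut(S)$, to be finite. Your coset count merely spells out the last step in more detail than the paper does.
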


\noindent
Over the field of complex numbers, the converse of Proposition \ref{finiteness} 
holds by using the Torelli type theorem for Enriques surfaces (Dolgachev \cite{D1}, Theorem 3.3).

Now, we recall Vinberg's criterion
which guarantees that a group generated by finite number of reflections is
of finite index in ${\rm O}(\Num(S))$.

Let $\Delta$ be a finite set of $(-2)$-vectors in $\Num(S)$.
Let $\Gamma$ be the graph of $\Delta$, that is,
$\Delta$ is the set of vertices of $\Gamma$ and two vertices $\delta$ and $\delta'$ are joined by $m$-tuple lines if $\langle \delta, \delta'\rangle=m$.
We assume that the cone
$$K(\Gamma) = \{ x \in \Num(S)\otimes {\bf R} \ : \ \langle x, \delta_i \rangle \geq 0, \ \delta_i \in \Delta\}$$
is a strictly convex cone. Such $\Gamma$ is called non-degenerate.
A connected parabolic subdiagram $\Gamma'$ in $\Gamma$ is a  Dynkin diagram of type $\tilde{A}_m$, $\tilde{D}_n$ or $\tilde{E}_k$ (see \cite{V}, p. 345, Table 2).  If the number of vertices of $\Gamma'$ is $r+1$, then $r$ is called the rank of $\Gamma'$.  A disjoint union of connected parabolic subdiagrams is called a parabolic subdiagram of $\Gamma$.  We denote by $\tilde{K_1}\oplus \tilde{K_2}$ a parabolic subdiagram which is a disjoint union of two
connected parabolic subdiagrams of type $\tilde{K_1}$ and $\tilde{K_2}$, where
$K_i$ is $A_m$, $D_n$ or $E_k$. The rank of a parabolic subdiagram is the sum of the rank of its connected components.  Note that the dual graph of singular fibers of an elliptic fibration on $S$ gives a parabolic subdiagram.  For example, a singular fiber of type ${\rm III}$, ${\rm IV}$ or ${\rm I}_{n+1}$ defines a parabolic subdiagram of type $\tilde{A}_1$, $\tilde{A}_2$ or 
$\tilde{A}_n$ respectively.  
We denote by $W(\Gamma)$ the subgroup of ${\rm O}(\Num(S))$ 
generated by reflections associated with $\delta \in \Gamma$.

\begin{prop}\label{Vinberg}{\rm (Vinberg \cite{V}, Theorem 2.3)}
Let $\Delta$ be a set of $(-2)$-vectors in $\Num(S)$
and let $\Gamma$ be the graph of $\Delta$.
Assume that $\Delta$ is a finite set, $\Gamma$ is non-degenerate and $\Gamma$ contains no $m$-tuple lines with $m \geq 3$.  Then $W(\Gamma)$ is of finite index in ${\rm O}(\Num(S))$ if and only if every connected parabolic subdiagram of $\Gamma$ is a connected component of some
parabolic subdiagram in $\Gamma$ of rank $8$ {\rm (}= the maximal one{\rm )}.
\end{prop}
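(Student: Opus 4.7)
The plan is to translate the combinatorial statement into hyperbolic geometry. Since $\Num(S)\cong U\oplus E_8$ has signature $(1,9)$, projectivizing the positive cone $P(S)$ produces a model of the $9$-dimensional hyperbolic space $\mathbb{H}^9$, and the cone $K(\Gamma)$ descends to a convex polyhedron $\overline{K}(\Gamma)\subset\mathbb{H}^9$ bounded by the mirrors $\delta^{\perp}$ for $\delta\in\Delta$. The assumption that $\Gamma$ contains no $m$-fold line with $m\geq 3$ forces each dihedral angle between two adjacent walls to have the form $\pi/n$ with $n\in\{2,3,4,6\}$, so $\overline{K}(\Gamma)$ is a Coxeter polyhedron, and by the standard Poincar\'e theorem on reflection groups $W(\Gamma)$ acts on $\mathbb{H}^9$ as a discrete reflection group with $\overline{K}(\Gamma)$ as fundamental domain.

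The first step is the equivalence: $W(\Gamma)$ is of finite index in $\O(\Num(S))$ if and only if $\overline{K}(\Gamma)$ has finite hyperbolic volume. This follows from the classical arithmeticity fact that $\O(U\oplus E_8)$ acts on $\mathbb{H}^9$ with finite covolume, together with the identity
\[
\mathrm{vol}\bigl(\overline{K}(\Gamma)\bigr)=[\O(\Num(S)):W(\Gamma)]\cdot\mathrm{vol}\bigl(\O(\Num(S))\backslash\mathbb{H}^9\bigr).
\]
So the task is reduced to characterizing finite volume of $\overline{K}(\Gamma)$.

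The second step invokes the classification of finite-volume convex polyhedra in $\mathbb{H}^n$: such a polyhedron has finite volume iff it is the convex hull, inside $\overline{\mathbb{H}^n}$, of finitely many vertices, each of which is either an ordinary interior vertex or an isolated ideal vertex (the apex of a horospherical cusp); faces of positive dimension meeting the sphere at infinity are forbidden. In our lattice picture, a boundary point of $\overline{K}(\Gamma)$ corresponds to an isotropic ray $\bbR_{\geq 0} v\subset \partial K(\Gamma)$; the walls through it are the $\delta^{\perp}$ with $\langle\delta,v\rangle=0$, their Gram matrix is negative semidefinite with one-dimensional radical $\bbR v$, and so the subgraph of $\Gamma$ they span is parabolic, i.e.\ a disjoint union of affine Dynkin diagrams $\tilde A_m$, $\tilde D_n$, $\tilde E_k$.

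The principal obstacle is the identification that a boundary point of $\overline{K}(\Gamma)$ is an \emph{isolated} ideal vertex iff the associated parabolic subdiagram has maximal rank, namely $\dim\mathbb{H}^9-1=8$. The rank of the subdiagram equals the dimension of the negative-definite quotient of its root span by $\bbR v$, while the codimension of the cusp face at $v$ in $\overline{K}(\Gamma)$ is exactly this quantity; thus the cusp face collapses to a point iff the parabolic rank is $8$. From here the proposition falls out: if every connected parabolic subdiagram occurs as a component of a rank-$8$ one, every cusp of $\overline{K}(\Gamma)$ is a genuine ideal vertex and the polyhedron has finite volume; conversely, a connected parabolic subdiagram not extending to rank $8$ produces an isotropic direction lying on a positive-dimensional face that escapes to infinity, giving $\overline{K}(\Gamma)$ infinite volume. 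Combined with the first step, this proves the stated equivalence.
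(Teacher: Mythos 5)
The paper does not prove this proposition at all: it is quoted verbatim from Vinberg \cite{V}, Theorem 2.3, so there is no internal proof to compare with, and what you have written is an attempted reconstruction of Vinberg's own argument. Your overall strategy (pass to the hyperbolic polyhedron $\overline{K}(\Gamma)$, use that $\O(U\oplus E_8)$ has finite covolume so that finite index of $W(\Gamma)$ is equivalent to finite volume of $\overline{K}(\Gamma)$, then characterize finite volume by the behaviour at the sphere at infinity) is indeed the right framework, and your argument for the direction ``finite index $\Rightarrow$ combinatorial condition'' is essentially sound: the null vector $E=\sum m_i\delta_i$ of a connected parabolic subdiagram pairs non-negatively with every $\delta\in\Delta$, hence gives an ideal point of $\overline{K}(\Gamma)$, and if that point is an isolated ideal vertex the full set $\Delta_E=\{\delta:\langle\delta,E\rangle=0\}$ is parabolic of rank $8$ containing the given subdiagram as a connected component.

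The genuine gap is in the converse direction, where you write that under the combinatorial condition ``every cusp of $\overline{K}(\Gamma)$ is a genuine ideal vertex.'' This tacitly assumes that every point $v$ of $\overline{K}(\Gamma)\cap\partial\mathbb{H}^9$ has the property that the walls through $v$ form a parabolic subdiagram (with radical $\bbR v$), so that the hypothesis on parabolic subdiagrams applies to it. That is false as stated and is exactly the crux of Vinberg's theorem: for an arbitrary isotropic ray $v$ in the closure of $K(\Gamma)$, the set $\Delta_v$ may be empty, elliptic, or a union of components whose Gram matrix has radical not containing $v$, and the condition ``every connected parabolic subdiagram of $\Gamma$ extends to rank $8$'' says nothing about such points; a priori the closure could meet $\partial\mathbb{H}^9$ in a positive-dimensional set without any parabolic subdiagram of $\Gamma$ failing to extend. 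Ruling this out is where the standing hypotheses must enter --- finiteness of $\Delta$, non-degeneracy of $\Gamma$, and above all the absence of $m$-tuple edges with $m\geq 3$ (two divergent walls already produce an infinite-volume region with no parabolic subdiagram at all, so without that hypothesis the criterion is simply false). Your argument never uses these hypotheses beyond invoking Poincar\'e's theorem, which is a clear sign the key step is missing. Two smaller points: since $\langle\delta,\delta'\rangle\in\{0,1,2\}$, the only dihedral angles that occur are $\pi/2$ and $\pi/3$ (double edges give walls tangent at infinity), not $\pi/4$ or $\pi/6$; and the covolume identity should be read modulo the kernel $\pm 1$ of the action on $\mathbb{H}^9$, though this does not affect finiteness.
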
 
\noindent

Finally we recall some facts on elliptic fibrations on Enriques surfaces.

\begin{prop}\label{multi-fiber}{\rm (Dolgachev and Liedtke \cite{DL}, Theorem 4.8.3)}

Let $f : S \to {\bf P}^1$ be an elliptic fibration on an Enriques surface $S$ in 
characteristic $2$.  Then the following hold.

$({\rm i})$  If $S$ is classical, then $f$ has two tame multiple fibers, each is
either an ordinary elliptic curve or a singular fiber of additive type.

$({\rm ii})$  If $S$ is singular, then $f$ has one wild multiple 
fiber which is a smooth ordinary elliptic curve or a singular fiber of multiplicative type.

$({\rm iii})$  If $S$ is supersingular, then $f$ has one wild multiple fiber which is a
supersingular elliptic curve or a singular fiber of additive type.
\end{prop}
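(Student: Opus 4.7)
The plan is to apply the canonical bundle formula for elliptic fibrations in positive characteristic, combined with the classification of $\Pic^\tau(S)$ of an Enriques surface in characteristic $2$ into the three possible group schemes of order two. For $f\colon S \to \bbP^1$ the canonical bundle formula reads
$$
K_S \sim f^*(K_{\bbP^1} + \mathfrak{d}) + \sum_i (m_i - 1 + a_i) F_i,
$$
where $m_iF_i$ runs over the multiple fibers, $\deg\mathfrak{d} = \chi(\calO_S) = 1$, and $a_i\geq 0$ vanishes exactly when the $i$-th multiple fiber is tame. Numerical triviality of $K_S$ together with $f^*(K_{\bbP^1}+\mathfrak{d}) \equiv -F$ then yields the single constraint $\sum_i (m_i-1+a_i)/m_i = 1$.

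Next I would identify the multiple fibers of $f$ with torsors under $\Pic^\tau_{S/\bbP^1}$, whose generic stalk is $\Pic^\tau(S)$; this has order $2$ and, by Bombieri--Mumford, is étale $\bbZ/2\bbZ$, multiplicative $\mu_2$, or additive $\alpha_2$ according as $S$ is classical, singular, or supersingular. In the classical case the twist is étale, hence tame, so $a_i=0$ and $m_i\leq 2$, which forces exactly two double fibers; a tame reduced double fiber must carry an étale subgroup of order $2$ in the smooth part of its Picard scheme, ruling out multiplicative Kodaira types and supersingular elliptic curves and leaving the alternatives in (i). In the singular and supersingular cases the torsor structure yields a single wild multiple fiber with $(m,a)=(2,1)$, whose reduction must contain $\mu_2$, respectively $\alpha_2$, inside the smooth part of $\Pic^0$; this selects precisely the ordinary elliptic or multiplicative types in the $\mu_2$-case, and the supersingular elliptic or additive types in the $\alpha_2$-case.

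The main obstacle will be the last step, matching the group scheme $\Pic^\tau(S)$ to the allowed Kodaira types of the multiple fiber: this requires a careful analysis of the $2$-torsion of the Néron model of the Jacobian fibration and a case-by-case verification of which singular Kodaira fibers contain $\mu_2$ or $\alpha_2$ in their identity component. A secondary, more bookkeeping, obstacle is justifying that the wild index $a_i$ equals exactly $1$ for a multiplicity-two wild fiber on an Enriques surface and ruling out higher multiplicities that are combinatorially compatible with the numerical equation, which should follow from $\chi(\calO_S)=1$ and the length computation of $R^1f_*\calO_S$ at the wild point.
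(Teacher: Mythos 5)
Your outline gets the counting part right (two tame double fibers, or a single wild multiple fiber, from the canonical bundle formula), but the heart of parts (ii) and (iii) is precisely the step you defer as ``the main obstacle,'' so the proposal as it stands has a genuine gap. You assert that the reduction $G$ of the wild multiple fiber must contain $\mu_2$ (resp.\ $\alpha_2$) in the smooth part of its Picard scheme, and propose to verify this through the $2$-torsion of the N\'eron model of the Jacobian fibration; but no argument is given linking $\Pic^\tau(S)$ to the fiber $G$, and the N\'eron-model route is itself a detour, since $G$ is a fiber of $f$ and not of its Jacobian, so transporting information between the two needs additional work. The paper's proof does this step directly and cohomologically: from $0 \to \calO_S(-G) \to \calO_S \to \calO_G \to 0$, Serre duality with $K_S \sim 0$ and $h^0(\calO_S(G)) = h^2(\calO_S)=1$ give that the restriction $H^1(S,\calO_S) \to H^1(G,\calO_G)$ is an isomorphism; since Frobenius is bijective on $H^1(S,\calO_S)$ when $S$ is singular and zero when $S$ is supersingular, the same holds on $H^1(G,\calO_G)$, which is exactly what separates ordinary elliptic/multiplicative fibers from supersingular elliptic/additive ones. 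Some such argument (or an equivalent one via restricting the canonical $\bbZ/2$- or $\alpha_2$-cover to $G$ and showing the restricted torsor is nontrivial) is indispensable; without it (ii) and (iii) are not proved.

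Two secondary points. First, in the classical case the inference ``the twist is \'etale, hence tame'' is not an argument; the actual reason a classical Enriques surface has no wild fibers is that wildness is recorded by the torsion part of $R^1f_*\calO_S$, which embeds into $H^1(S,\calO_S)=0$ for classical $S$ (this, together with the fiber count, is what the paper imports from Bombieri--Mumford, Prop.~11). Relatedly, your normalization $\deg\mathfrak{d}=\chi(\calO_S)=1$ silently absorbs that torsion term into the coefficients $a_i$; that is consistent, but it should be said, since it is exactly where tame versus wild enters the numerics. Second, your argument in case (i) — that tameness forces the normal bundle $\calO_G(G)$ to be a point of order $2$ of $\Pic^0(G)$, which $\bbG_m$ and a supersingular elliptic curve do not possess in characteristic $2$ — is essentially the paper's argument and is fine.
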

\begin{proof}
As for the number of multiple fibers in each case, it is given 
in Bombieri and Mumford \cite{BM2}, Proposition 11.
Let $2G$ be a multiple fiber of $f : S \longrightarrow {\bf P}^1$.
If $S$ is classical, the multiple fiber $2G$ is tame.
Therefore, the normal bundle ${\calO}_{G}(G)$ of $G$ is of order 2 (cf. Katsura and Ueno \cite{KU},
p. 295, (1.7)). On the other hand, neither the Picard variety ${\rm Pic}^0({\bf G}_m)$
of the multiplicative group ${\bf G}_m$ nor ${\rm Pic}^0(E)$ of the supersingular elliptic curve
$E$ has any 2-torsion point. Therefore, $G$ is either an ordinary elliptic curve or
a singular fiber of additive type. Now, we consider an exact sequence:
$$
  0 \longrightarrow {\calO}_S(-G)  \longrightarrow {\calO}_S \longrightarrow {\calO}_{G}
\longrightarrow 0.
$$
Then, we have the long exact sequence
$$
\rightarrow H^1(S, {\calO}_S) \longrightarrow H^1(G, {\calO}_G)
\longrightarrow H^2(S, {\calO}_S(-G)) \longrightarrow H^2(S, {\calO}_S)\rightarrow 0.
$$
If $S$ is either singular or supersingular, we have 
$H^1(S, {\calO}_S)\cong H^2(S, {\calO}_S)\cong k$. 
Note that in our case the canonical divisor $K_S$ is linearly equivalent to 0.
Since $2G$ is a multiple fiber,
by the Serre duality theorem, we have
$$
H^2(S, {\calO}_S(-G)) \cong H^0(S, {\calO}_S(K_S + G)) \cong H^0(S, {\calO}_S(G))\cong k.
$$ 
Therefore, we see that
the natural homomorphism 
$$
H^1(S, {\calO}_S) \longrightarrow H^1(G, {\calO}_G)
$$
is an isomorphism. If $S$ is singular, then the Frobenius map $F$ acts bijectively
on $H^1(S, {\calO}_S)$. Hence, $F$ acts on $H^1(G, {\calO}_G)$ bijectively.
Therefore, $G$ is either an ordinary elliptic curve or a singular fiber of multiplicative type.
If $S$ is supersingular, then the Frobenius map $F$ is the zero map
on $H^1(S, {\calO}_S)$. Hence, $F$ is also a zero map on $H^1(G, {\calO}_G)$.
Therefore, $G$ is either a supersingular elliptic curve or a singular fiber of additive type.
\end{proof}

Let $f : S \to {\bf P}^1$ be an elliptic fibration on an Enriques surface $S$.
We use Kodaira's notation for singular fibers of $f$:
$${\rm I}_n,\ {\rm I}_n^*,\ {\rm II},\ {\rm II}^*,\ {\rm III},\ {\rm III}^*,\ {\rm IV},\  {\rm IV}^*.$$

\begin{prop}\label{singular-fiber}

Let $f : S \to {\bf P}^1$ be an elliptic fibration on an Enriques surface $S$ in 
characteristic $2$.  Then the type of reducible singular fibers is one of the following:
$$({\rm I}_3, {\rm I}_3, {\rm I}_3, {\rm I}_3), \ ({\rm I}_5, {\rm I}_5), \
({\rm I}_9),\ ({\rm I}_4^*),\ ({\rm II}^*),\ ({\rm III}, {\rm I}_8),$$
$$({\rm I}_1^*, {\rm I}_4), \ ({\rm III}^*, {\rm I}_2),\
({\rm IV}, {\rm IV}^*),\ ({\rm IV}, {\rm I}_2, {\rm I}_6),\ ({\rm IV}^*, {\rm I}_3).$$
\end{prop}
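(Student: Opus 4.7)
The plan is to identify the collection of reducible singular fibers of $f$ with a rank--$8$ root sublattice of $E_8$ via the embedding $\Num(S)\cong U\oplus E_8$, and then enumerate the possibilities subject to characteristic--$2$ restrictions.

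First I would verify that $f$ is extremal, i.e., that its Jacobian fibration has Mordell--Weil rank zero. In the context of this paper (Enriques surfaces with finite automorphism group) this is automatic, since a Mordell--Weil element of infinite order would produce an automorphism of $S$ of infinite order via translation, contradicting the assumed finiteness of $\Aut(S)$. Applying the Shioda--Tate formula to the Jacobian fibration and using $\rho(S)=10$, extremality gives $\sum_i(m_i-1)=8$, where $m_i$ is the number of components of the $i$-th reducible fiber $F_i$.

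Next I would exploit that the class $F$ of a general fiber is primitive isotropic in $\Num(S)$, so $F^{\perp}/\la F\ra$ is canonically isomorphic to $E_8$. Choosing one component $\Theta_0^{(i)}$ in each reducible fiber $F_i$, the remaining components span, modulo $F$, a negative definite root sublattice $R\subset E_8$ whose summand on $F_i$ is the finite Dynkin type underlying the affine type of $F_i$. Since $\rank R = 8$, the Borel--de Siebenthal classification of rank--$8$ root subsystems of $E_8$ provides the exhaustive list
\begin{equation*}
E_8,\ D_8,\ A_8,\ E_7\oplus A_1,\ E_6\oplus A_2,\ D_5\oplus A_3,\ A_7\oplus A_1,\ A_5\oplus A_2\oplus A_1,\ 2A_4,\ 4A_2,
\end{equation*}
together with $2D_4$, $D_6\oplus 2A_1$, $D_4\oplus 4A_1$, $2A_3\oplus 2A_1$, and $8A_1$.

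Finally, for each surviving root type I would translate the summands back to Kodaira fiber types via the dictionary $\tilde A_{n-1} \leftrightarrow \I_n$ (with $\III$ and $\IV$ as possible additive realizations for $n=2,3$), $\tilde D_n \leftrightarrow \I^*_{n-4}$, $\tilde E_n \leftrightarrow \IV^*,\III^*,\II^*$, and invoke the Euler number identity
\begin{equation*}
c_2(S)=12=\sum_i \bigl(e(F_i)+\delta_i\bigr),
\end{equation*}
where $\delta_i\geq 0$ is the wild conductor of $F_i$, to eliminate the root systems not appearing in the proposition and to fix the additive/multiplicative ambiguity on the $\tilde A_1$ and $\tilde A_2$ summands. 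The key characteristic--$2$ input is that an additive fiber whose local monodromy has even order forces $\delta_i\geq 1$: for instance, two fibers of type $\I^*_0$ would already give $\sum(e(F_i)+\delta_i)\geq 2(6+1)=14>12$, killing $2D_4$, and analogous Euler--number estimates dispose of $D_6\oplus 2A_1$, $D_4\oplus 4A_1$, $2A_3\oplus 2A_1$, and $8A_1$. The main obstacle is exactly this case-by-case characteristic--$2$ Euler--number accounting, which must be carried out uniformly for all excluded sublattices, and which is the step where the argument genuinely diverges from the (easier) characteristic $0$ or $\neq 2,3$ analogue.
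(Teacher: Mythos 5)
Your reduction to the Jacobian and the enumeration of rank-$8$ root sublattices of $E_8$ are fine as far as they go (this is the skeleton of how Lang's classification is proved; the paper itself takes a shortcut, quoting Liu--Lorenzini--Raynaud to identify the fiber types of $f$ with those of $J(f)$ and then citing Lang and Ito). The genuine gap is in your final elimination step: the Euler-number/wild-conductor accounting cannot produce the stated list. The three all-multiplicative configurations $({\rm I}_4,{\rm I}_4,{\rm I}_2,{\rm I}_2)$, $({\rm I}_8,{\rm I}_2,{\rm I}_1,{\rm I}_1)$ and $({\rm I}_6,{\rm I}_3,{\rm I}_2,{\rm I}_1)$ have $\delta_i=0$ for every fiber and total Euler number exactly $12$, so they pass your test; they realize $2A_3\oplus 2A_1$, $A_7\oplus A_1$ and $A_5\oplus A_2\oplus A_1$ respectively, and yet none of them occurs in characteristic $2$: the proposition's list omits $2A_3\oplus 2A_1$ altogether and allows only $({\rm III},{\rm I}_8)$ and $({\rm IV},{\rm I}_2,{\rm I}_6)$ for the latter two sublattices. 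In particular your claim that an ``analogous Euler-number estimate'' disposes of $2A_3\oplus 2A_1$ is false, and no Euler count can prefer $({\rm III},{\rm I}_8)$ over $({\rm I}_8,{\rm I}_2,{\rm I}_1,{\rm I}_1)$, since the excluded option is the one with no wild contribution at all.

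Ruling these out requires characteristic-$2$ arithmetic of a different kind, which is precisely what the paper outsources to Lang's explicit classification: for $({\rm I}_4,{\rm I}_4,{\rm I}_2,{\rm I}_2)$ the (necessarily torsion) Mordell--Weil group would be ${\bf Z}/4\oplus{\bf Z}/2$, impossible in characteristic $2$ since an elliptic curve there has at most ${\bf Z}/2$ of rational $2$-torsion; for the other two, the fibration would be the universal family with a point of order $4$ (resp.\ $6$), and a computation with the Tate normal form shows that in characteristic $2$ the fibers $({\rm I}_2,{\rm I}_1,{\rm I}_1)$ (resp.\ $({\rm I}_3,{\rm I}_1)$) collapse into a single fiber of type ${\rm III}$ (resp.\ ${\rm IV}$). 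Without such an argument your proof yields a strictly longer list than the proposition asserts. Two smaller points: extremality is justified in your write-up by finiteness of ${\rm Aut}(S)$, which is not a hypothesis of the statement (the paper implicitly restricts to the extremal case by citing Lang's classification of extremal rational elliptic surfaces; in the applications the fibrations come from rank-$8$ parabolic subdiagrams, so extremality follows from Shioda--Tate); and on the Enriques surface itself the general fiber class is twice the half-fiber, so the root-lattice bookkeeping should be carried out on the Jacobian throughout, using the cited fact that $f$ and $J(f)$ have the same singular fibers.
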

\begin{proof}
Consider the Jacobian fibration $J(f) : R \to {\bf P}^1$ of $f$ which is a rational elliptic surface.  
It is known that the type of singular fibers of $f$ coincides with that of
$J(f)$ (cf. Liu-Lorenzini-Raynaud \cite{LLR}, Theorem 6.6).  Now the assertion follows from the classification of singular fibers 
of rational elliptic surfaces in characteristic 2 due to Lang \cite{L1}, \cite{L2} (also see Ito \cite{I}).
\end{proof}

\section{Enriques surfaces with the dual graph of type VII}\label{sec3}

In this section, we construct Enriques surfaces in characteristic 2 whose dual graph 
of all nonsingular rational curves is of type VII.
The method to construct them is similar to the one in Katsura and Kondo \cite{KK}, \S 4.

We consider the nonsingular complete model
of the supersingular elliptic curve $E$ defined by
$$
         y^2 + y = x^3 + x^2.
$$
For $(x_1, y_1), (x_2, y_2) \in E$, the addition of this elliptic curve is given by, 
$$
\begin{array}{l}
x_{3} = x_1 + x_2 +
\left(\frac{y_2 + y_1}{x_2 + x_1}\right)^2 + 1 \\
y_3 = y_1 + y_2 + \left(\frac{y_2 + y_1}{x_2 + x_1}\right)^3 + \left(\frac{y_2 + y_1}{x_2 + x_1}\right) + \frac{x_1y_2 +x_2y_1}{x_2 +x_1} + 1.
\end{array}
$$
The ${\bf F}_4$-rational points of $E$ are given by
$$
\begin{array}{l}
P_{0} = \infty,  P_{1} =(1, 0), P_{2} =(0, 0), P_{3} =(0, 1),
 P_{4} =(1, 1).
\end{array}
$$
The point $P_{0}$ is the zero point of $E$, and these points make
the cyclic group of order five : 
$$
   P_{i} = iP_{1} \quad (i = 2, 3, 4),~P_{0} = 5P_{1}
$$
Now we consider the relatively minimal
nonsingular complete  elliptic surface $\psi : R \longrightarrow {\bf P}^1$
defined by
$$
y^2 + sxy + y = x^3 + x^2 + s
$$
with a parameter $s$.
This surface is a rational elliptic surface with two singular fibers of type $\I_5$
over the points given by $s = 1, \infty$, and two singular fibers of type $\I_1$
over the points given by $t = \omega, \omega^2$.
Here, $\omega$ is a primitive cube root of unity.
We consider the base change of $\psi : R \longrightarrow {\bf P}^1$
by $s = t^2$. 
Then, we have the elliptic surface defined by 
$$
(*)\quad \quad \quad   y^2 + t^2xy + y = x^3 + x^2 + t^2.
$$
We consider the relatively minimal nonsingular complete model 
of this elliptic surface :
\begin{equation}\label{pencil3}
f : Y \longrightarrow {\bf P}^1.
\end{equation}
The surface $Y$ is an elliptic $K3$ surface. 
From $Y$ to $R$, there exists a generically surjective 
purely inseparable rational map. We denote by $R^{(\frac{1}{2})}$
the algebraic surface whose coefficients of the defining equations are the square
roots of those of $R$. Then, $R^{(\frac{1}{2})}$ is also a rational surface, and
we have the Frobenius morphism $F : R^{(\frac{1}{2})}\longrightarrow R$. $F$ factors
through a generically surjective 
purely inseparable rational map from $R^{(\frac{1}{2})}$ to $Y$. 
By the fact that $R^{(\frac{1}{2})}$ is rational
we see that $Y$ is unirational. Hence, $Y$
is a supersingular $K3$ surface, i.e. the Picard number $\rho (Y)$ is equal 
to the second Betti number $b_{2}(Y)$ (cf. Shioda \cite{S}, p.235, Corollary 1).

The discriminant of the elliptic surface $f : Y \longrightarrow {\bf P}^1$ is given by
$$
   \Delta = (t + 1)^{10}(t^2 + t + 1)^2
$$
and the $j$-invariant is given by
$$
   j = t^{24}/(t + 1)^{10}(t^2 + t + 1)^2.
$$
Therefore,
on the elliptic surface $f : Y \longrightarrow {\bf P}^1$,
there exist two singular fibers of type $\I_{10}$ over
the points given by $t = 1, \infty$, and two singular fibers 
of type $\I_2$ over
the points given by $t = \omega, \omega^2$.
The regular fiber over the point defined by $t = 0$ is
the supersingular elliptic curve $E$.  

The elliptic $K3$ surface $f: Y \longrightarrow {\bf P}^1$
has ten sections $s_{i}, m_{i}$ $(i = 0, 1, 2, 3, 4)$ given as follows:
$$
\begin{array}{ll}
s_0 : \mbox{the zero section} &\mbox{passing through}~P_{0}~\mbox{on}~E\\
s_1 : x = 1, y = t^2 &\mbox{passing through}~P_{1}~\mbox{on}~E\\
s_2 : x = t^2, y = t^2 &\mbox{passing through}~P_{2}~\mbox{on}~E\\
s_3 : x = t^2, y = t^4 + t^2 + 1&\mbox{passing through}~P_{3}~\mbox{on}~E\\
s_4 :  x = 1, y = 1 &\mbox{passing through}~P_{4}~\mbox{on}~E\\
m_0 : x = \frac{1}{t^2}, y = \frac{1}{t^3} +\frac{1}{t^2} + t &\mbox{passing through}~P_{0}~\mbox{on}~E\\
m_1 : x = t^3 + t + 1,~ y = t^4 + t^3 + t &\mbox{passing through}~P_{1}~\mbox{on}~E\\
m_2 : x = t,~ y = t^3 &\mbox{passing through}~P_{2}~\mbox{on}~E\\
m_3 :  x = t,~ y = 1&\mbox{passing through}~P_{3}~\mbox{on}~E\\
m_4 :  x = t^3 + t + 1,~ y = t^5 + t^4 + t^2 + t + 1&\mbox{passing through}~P_{4}~\mbox{on}~E.
\end{array}
$$
These ten sections make the cyclic group of order 10, and the group structure is
given by
$$
s_{i} = is_1,\ m_i =m_0 + s_i~(i = 0, 1, 2, 3, 4),\ 2m_0 = s_0
$$
with $s_0$, the zero section. 
The images of $s_{i}$ (resp. $m_{i}$) ($i = 0, 1, 2, 3, 4$) on $R$ give sections 
(resp. multi-sections) of $\psi : R \longrightarrow {\bf P}^1$.
The intersection numbers of the sections $s_i, m_i$ $(i = 0, 1, 2, 3, 4)$
are given by
\begin{equation}\label{int-sections}
   \langle s_i, s_j\rangle =-2\delta_{ij},~ \langle m_i, m_j\rangle~=-2\delta_{ij},~ \langle s_i, m_j\rangle = \delta_{ij},~ 
\end{equation}
where $\delta_{ij}$ is Kronecker's delta.

On the singular elliptic surface $(*)$, we denote by $F_1$ the fiber
over the point defined by $t = 1$. $F_1$ is an irreducible curve and on $F_1$
the surface $(*)$ has only one singular point $P$.
The surface $Y$ is a surface obtained by the minimal resolution of singularities of $(*)$.
We denote the proper transform of $F_1$ on Y again by $F_1$, if confusion doesn't occur.
We have nine exceptional curves $E_{1,i}$ $(i = 1,2, \ldots, 9)$ over the point $P$, and
as a singular fiber of type $I_{10}$ of the elliptic surface $f : Y \longrightarrow {\bf P}^1$,
$F_1$ and these nine exceptional curves make a decagon $F_1E_{1,1}E_{1,2}\ldots E_{1,9}$ clockwisely.
The blowing-up at the singular point $P$ gives two exceptional curves
$E_{1,1}$ and $E_{1,9}$, and they intersect each other at a singular point. The blowing-up
at the singular point again gives two exceptional curves $E_{1,2}$ and $E_{1,8}$. 
The exceptional curve  $E_{1,2}$ (resp. $E_{1,8}$) intersects $E_{1,1}$ (resp. $E_{1,9}$) transeversely.
Exceptional curves $E_{1,2}$ and $E_{1,8}$ intersect each other at a singular point, and so on. 
By successive blowing-ups, 
the exceptional curve $E_{1,5}$ finally appears to complete the resolution of singularity 
at the point $P$, and it intersects $E_{1,4}$ and $E_{1,6}$ transeversely. 
Summerizing these results, we see that $F_1$ intersects $E_{1,1}$ and $E_{1,9}$ transversely, and that $E_{1,i}$ intersects $E_{1,i+ 1}$ $(i = 1, 2, \ldots, 8)$ transversely. 
We choose $E_{1,1}$ as the component
which intersects the section $m_2$. Then,
10 sections above intersect these 10 curves transversely as follows:

\begin{center}
\begin{tabular}{|c|c|c|c|c|c|c|c|c|c|c|}
\hline
sections & $s_0$ &$s_1$ &$s_2$ &$s_3$ &$s_4$ &$m_0$ &$m_1$ &$m_2$ & $m_3$ &$m_4$ \\
\hline
componets  &$F_1$ &$E_{1,8}$ &$E_{1,6}$ &$E_{1,4}$ &$E_{1,2}$ &$E_{1,5}$ &$E_{1,3}$ & $E_{1,1}$ & $E_{1,9}$ & $E_{1,7}$ \\
\hline
\end{tabular}
\end{center}
\noindent
Here, the table means that the section $s_0$ intersects the singular fiber 
over the point defined by 
$t= 1$ with the component $F_1$, for example.

The surface $Y$ has the automorphism $\sigma$ defined by
$$
 (t, x, y) \mapsto (\frac{t}{t+1}, \frac{x + t^4 + t^2 + 1}{(t + 1)^4}, 
\frac{x + y +s^6 + s^2}{(s + 1)^6}).
$$
The automorphism $\sigma$ is of order 4 and replaces the fiber over the point $t = 1$  
with the one
over the point $t = \infty$, and also replaces the fiber over the point $t =\omega$
with the one over the point $t = \omega^2$. The automorphism $\sigma$ acts 
on the ten sections above as follows:
\begin{center}
\begin{tabular}{|c|c|c|c|c|c|c|c|c|c|c|}
\hline
sections & $s_0$ &$s_1$ &$s_2$ &$s_3$ &$s_4$ &$m_0$ &$m_1$ &$m_2$ & $m_3$ &$m_4$ \\
\hline
$\sigma^{*}$(sections) &$s_0$ &$s_2$ &$s_4$ &$s_1$ &$s_3$ &$m_0$ &$m_2$ & $m_4$ & $m_1$ & $m_3$ \\
\hline
\end{tabular}
\end{center}
Using the automorphism $\sigma$, to construct the resolution of singularity on the fiber
over the point $P_{\infty}$ defined by $t = \infty$, we use the resolution of singularity 
on the fiber over the point $P_{1}$ defined by $t = 1$.
We attach names to the irreducible components of the fiber over $P_{\infty}$
in the same way as above.
Namely,
on the singular elliptic surface $(*)$, we denote by $F_{\infty}$ the fiber
over the point defined by $t = \infty$. We also denote the proper transform 
of $F_{\infty}$  on $Y$  by $F_{\infty}$. 
We have 9 exceptinal curves $E_{\infty,i}$ $(i = 1,2, \ldots, 9)$ over the point $P_{\infty}$, and
as a singular fiber of type $I_{10}$ of the elliptic surface $f : Y \longrightarrow {\bf P}^1$,
$F_{\infty}$ and these 9 exceptional curves make a decagon $F_{\infty}E_{\infty, 1}E_{\infty, 2}\ldots E_{\infty, 9}$ clockwisely. $F_{\infty}$ intersects $E_{\infty, 1}$ and $E_{\infty, 9}$ transversely, and that $E_{\infty, i}$ intersects $E_{\infty, i+ 1}$ 
$(i = 1, 2, \ldots, 8)$ transversely. 

The singular fiber of $f : Y \longrightarrow {\bf P}^1$ over the point defined by $t= \omega$
(resp. $t = \omega^{2}$) consists of two irreducible components $F_{\omega}$
and $E_{\omega}$ (resp. $F_{\omega^{2}}$ and $E_{\omega^{2}}$), 
where $F_{\omega}$ (resp. $F_{\omega^{2}}$) 
is the proper transform of the fiber over the point $P_{\omega}$ 
(resp. $P_{\omega^{2}}$) in $(*)$. 

Then,
the 10 sections above intersect singular fibers of elliptic surface $f : Y \longrightarrow {\bf P}^1$ as follows:
{
\footnotesize
\begin{center}
\begin{table}[!htb]\label{}
\begin{tabular}{|l|c|c|c|c|c|c|c|c|c|c|}
\hline
sections & $s_0$ &$s_1$ &$s_2$ &$s_3$ &$s_4$ &$m_0$ &$m_1$ &$m_2$ & $m_3$ &$m_4$ \\
\hline
 $t= 1$  &$F_1$ &$E_{1,8}$ &$E_{1,6}$ &$E_{1,4}$ &$E_{1,2}$ &$E_{1,5}$ &$E_{1,3}$ & $E_{1,1}$ & $E_{1,9}$ & $E_{1,7}$ \\
\hline
 $t = \infty$  &$F_\infty$ &$E_{\infty, 6}$ &$E_{\infty, 2}$ &$E_{\infty, 8}$ &$E_{\infty, 4}$ &$E_{\infty, 5}$ &$E_{\infty, 1}$ & $E_{\infty, 7}$ & $E_{\infty, 3}$ & $E_{\infty, 9}$ \\
\hline
 $t = \omega$ & $F_\omega$ &$F_\omega$ &$F_\omega$ &$F_\omega$ &$F_\omega$ &$E_{\omega}$ &$E_{\omega}$ & $E_{\omega}$ & $E_{\omega}$ & $E_{\omega}$ \\
\hline
 $t = \omega^2$  &$F_{\omega^2}$ &$F_{\omega^2}$ &$F_{\omega^2}$ &$F_{\omega^2}$ &$F_{\omega^2}$ &$E_{\omega^2}$ &$E_{\omega^2}$ & $E_{\omega^2}$ & $E_{\omega^2}$ & $E_{\omega^2}$ \\
\hline
\end{tabular}
\caption{}
\label{Table2}
\end{table}
\end{center}  
}
\begin{prop}\label{}
The surface $Y$ is a supersingular $K3$ surface with Artin invariant $1$.
\end{prop}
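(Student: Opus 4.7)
The supersingularity of $Y$ has essentially been established just above the statement: the Frobenius morphism $F : R^{(1/2)} \to R$ factors through a generically surjective purely inseparable rational map $R^{(1/2)} \dashrightarrow Y$, so $Y$ is unirational and by Shioda's theorem \cite{S} its Picard number equals $b_2(Y)=22$. What remains is to pin down the Artin invariant $\sigma_0$, which is characterized by $|\det(\NS(Y))|=2^{2\sigma_0}$ with $1 \le \sigma_0 \le 10$.

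My plan is to apply the Shioda--Tate formula to the elliptic fibration $f:Y\to\bbP^1$, using only the fiber and section data already assembled. Let $T\subset \NS(Y)$ denote the trivial sublattice, generated by the zero section $s_0$, a general fiber, and all fiber components disjoint from $s_0$. From the reducible singular fibers of types $(\I_{10},\I_{10},\I_2,\I_2)$ one gets
$$\rank(T) \;=\; 2 + 2\cdot 9 + 2\cdot 1 \;=\; 22 \;=\; \rho(Y),$$
so the Mordell--Weil group $MW(f)$ is a finite group. A routine discriminant computation, using $|\det(A_n)|=n+1$ for the contribution of each reducible fiber, gives $|\det(T)|=10\cdot 10\cdot 2\cdot 2 = 400$, and Shioda--Tate then reads
$$|\det(\NS(Y))| \;=\; 400/|MW(f)|^2.$$

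Combining this with the constraint $|\det(\NS(Y))|=4^{\sigma_0}$, the only integer solutions are $(|MW(f)|,\sigma_0)\in\{(10,1),(5,2)\}$. Since the ten sections $s_i,m_i$ ($i=0,\dots,4$) have already been exhibited and shown to form a cyclic group of order ten, we have $|MW(f)|\geq 10$, forcing $|MW(f)|=10$ and $\sigma_0=1$. The step I expect to be the main technical point is the last one: rather than computing $MW(f)$ independently as an abstract group (which would require a separate torsion analysis on the rational elliptic surface $R$), the discriminant constraint $4^{\sigma_0}\mid 400$ together with $\sigma_0\ge 1$ is used to promote the lower bound $|MW(f)|\geq 10$ to equality, simultaneously identifying the Artin invariant as $1$.
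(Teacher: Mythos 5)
Your proof is correct and takes essentially the same route as the paper: the paper deduces supersingularity from unirationality (Shioda) and then cites the Shioda--Tate formula for the fibration with fibers $(\I_{10},\I_{10},\I_2,\I_2)$ and the ten exhibited sections to get Artin invariant $1$ (and, as a by-product, $MW(f)\cong{\bf Z}/10{\bf Z}$), exactly as in your discriminant computation $4^{\sigma_0}=400/|MW(f)|^2$ with $|MW(f)|\ge 10$. You have merely written out the details that the paper leaves implicit.
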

\begin{proof}
The elliptic fibration $(\ref{pencil3})$ has two singular fibers of type 
$\I_{10}$, two singular fibers of type $\I_2$ and
ten sections.  Hence the assertion follows 
from the Shioda-Tate formula (cf. Shioda \cite{Shio}, Corollary 1.7).
\end{proof}

Incidentally, by the Shioda-Tate formula, we also see that the order of 
the group of the sections of $f : Y \longrightarrow {\bf P}^1$ is equal to 10 
and so the group is isomorphic to ${\bf Z}/10{\bf Z}$.

Now, we consider a rational vector field
$$
 D' = (t - 1)(t - a)(t - b)\frac{\partial}{\partial t} + (1 + t^2x)\frac{\partial}{\partial x}
$$
with $a, b \in k, \ a+b=ab, \ a^3\not=1$.
Then, we have $D'^2 = t^2 D'$, that is, $D'$ is $2$-closed. 
On the surface $Y$, the divisorial part of $D'$ is given by
$$
\begin{array}{rl}
(D') & = E_{1,1} + E_{1,3} + E_{1,5} + E_{1,7} + E_{1,9} + E_{\infty, 1} + E_{\infty, 3}
+ E_{\infty, 5} + E_{\infty, 7} \\
 & + E_{\infty, 9} -  E_{\omega} - E_{\omega^{2}} - 2(F_{\infty} + E_{\infty, 1} + E_{\infty, 2}+ E_{\infty, 3} 
+ E_{\infty, 4} + E_{\infty, 5} \\
 &+ E_{\infty, 6} + E_{\infty, 7} + E_{\infty, 8} + E_{\infty, 9}).
\end{array}
$$

We set $D = \frac{1}{t - 1}D'$. Then, $D^2=abD$, that is, $D$ is also 2-closed 
and $D$ is of additive type if $a=b=0$ and
of multiplicative type  otherwise.  Moreover, we have
\begin{equation}\label{divisorial}
\begin{array}{rl}
   (D) & =  - (F_{1} + E_{1,2} + E_{1,4} + E_{1,6} +  E_{1,8} 
+ F_{\infty} +  E_{\infty, 2} + E_{\infty, 4} \\
 &+  E_{\infty, 6} +  E_{\infty, 8} + E_{\omega} + E_{\omega^2}).
\end{array}
\end{equation}

From here  until Theorem \ref{main}, the argument is parallel to the one
in Katsura and Kondo \cite{KK}, \S 4, and so we give just a brief sketch of the proofs 
for the readers' convenience.
\begin{lemma}
The quotient surface $Y^{D}$ is nonsingular.
\end{lemma}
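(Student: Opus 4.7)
The plan is to apply the Rudakov--Shafarevich criterion quoted in Section~\ref{sec2}: the quotient $Y^D$ is nonsingular as soon as the zero-cycle $\langle D \rangle$ vanishes, i.e.\ $D$ is divisorial. Rather than checking this pointwise from the local formula for $D$, I would extract it from the global Euler number formula (\ref{euler}),
\[
c_2(Y) \;=\; \deg \langle D \rangle - \langle K_Y, (D)\rangle - (D)^2.
\]
Since $Y$ is a K3 surface, $c_2(Y)=24$ and $K_Y\sim 0$, so the problem reduces to computing $(D)^2$ from the explicit divisor $(D)$ given in (\ref{divisorial}) and checking that $(D)^2=-24$.

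From (\ref{divisorial}), $-(D)$ is a sum of twelve irreducible $(-2)$-curves on $Y$: namely $F_1,E_{1,2},E_{1,4},E_{1,6},E_{1,8}$ (five alternate components of the $I_{10}$-fiber over $t=1$), the analogous five components $F_\infty,E_{\infty,2},E_{\infty,4},E_{\infty,6},E_{\infty,8}$ of the $I_{10}$-fiber over $t=\infty$, and the single components $E_\omega$ and $E_{\omega^2}$ of the two $I_2$-fibers. Using the cyclic incidence pattern of each $I_{10}$ described just before Table~\ref{Table2} (so $F_1$ meets only $E_{1,1}$ and $E_{1,9}$, and $E_{1,i}$ meets only $E_{1,i-1}$ and $E_{1,i+1}$), the five selected components in each $I_{10}$-cycle sit in alternating positions and are therefore pairwise non-adjacent; hence pairwise disjoint. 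Curves supported on different fibers of $f$ are automatically disjoint. Consequently the twelve summands of $-(D)$ are pairwise disjoint $(-2)$-curves, and
\[
(D)^2 \;=\; 12\cdot(-2) \;=\; -24.
\]

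Substituting into (\ref{euler}) with $c_2(Y)=24$ and $\langle K_Y,(D)\rangle=0$ yields $\deg\langle D\rangle = 0$, so $\langle D\rangle$ is empty and $D$ is divisorial. By the Rudakov--Shafarevich theorem (\cite{RS}, Theorem 1 and its Corollary) the quotient surface $Y^D$ is then nonsingular. The only substantive verification is the disjointness check for the five alternating components in each $I_{10}$-cycle; everything else is mechanical from the formulas already recalled in Section~\ref{sec2}.
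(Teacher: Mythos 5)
Your argument is correct and is essentially the paper's own proof: both use the Euler-number formula (\ref{euler}) with $c_2(Y)=24$, $K_Y\sim 0$ and $(D)^2=-24$ to conclude $\deg\langle D\rangle=0$, hence $D$ is divisorial and $Y^D$ is nonsingular by Rudakov--Shafarevich. The only difference is that you spell out the disjointness of the twelve $(-2)$-curves to justify $(D)^2=-24$, a computation the paper simply asserts.
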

 \begin{proof}
Since $Y$ is a $K3$ surface, we have $c_{2}(Y) = 24$.
Using $(D)^2 = -24$ and the equation (\ref{euler}), we have 
$$
24 = c_{2}(Y) = \deg \langle D\rangle - \langle K_{Y}, (D)\rangle - (D)^2 = \deg \langle D\rangle  + 24.
$$
Therefore, we have $\deg  \langle D\rangle = 0$. This means that $D$ is divisorial, and that 
$Y^{D}$ is nonsingular.
\end{proof}

By the result on the canonical divisor formula of Rudakov and Shafarevich (see the equation (\ref{canonical})),
we have
$$
        K_{Y} = \pi^{*} K_{Y^D} + (D).
$$
\begin{lemma}\label{exceptional}
Let $C$ be an irreducible curve contained in the support of the divisor $(D)$,
and set $C' = \pi (C)$. Then, $C'$ is an exceptional curve of the first kind.
\end{lemma}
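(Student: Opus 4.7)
The plan is to show that each component $C$ of the support of $(D)$ is integral with respect to $D$, apply Proposition~\ref{insep}(i) to obtain $(C')^2=-1$, and then deduce the smoothness and rationality of $C'$ from the adjunction formula on the nonsingular surface $Y^{D}$.

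For integrality, I would work locally at a general point of each of the twelve curves listed in (\ref{divisorial}). Writing $D = f(g\partial_u + h\partial_v)$ with $\gcd(g,h)=1$ in suitable coordinates $(u,v)$, the inner vector field must be shown tangent to $C$. For $F_1$ this is immediate: taking $f = 1/(t-1)$, $g = (t-1)(t-a)(t-b)$, $h = 1+t^2x$, the $\partial_t$-component of $g\partial_t + h\partial_x$ vanishes along $\{t=1\}$. The analogous local computation, transported through the explicit resolution of the $I_{10}$ singularity and the $\sigma$-symmetry exchanging $t=1$ with $t=\infty$, handles the nine exceptional components coming from the two $I_{10}$-fibers, while a parallel check at $t=\omega$ and $t=\omega^2$ handles $E_{\omega}$ and $E_{\omega^{2}}$. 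With integrality in hand, Proposition~\ref{insep}(i) together with $C^{2}=-2$ give $(C')^{2}=-1$ and $\pi^{*}C'=C$, so that $\pi|_{C}$ has degree $2$.

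For smoothness and rationality of $C'$, I would use adjunction on the smooth surface $Y^D$. The twelve components in (\ref{divisorial}) are pairwise disjoint: in each $I_{10}$-fiber the five chosen components are alternating vertices of the decagon, and $E_{\omega}$, $E_{\omega^{2}}$ lie in different $I_2$-fibers. Therefore $(D)\cdot C = -C^{2} = 2$. Combining $K_Y = 0$ with the canonical divisor formula $K_Y = \pi^{*}K_{Y^D} + (D)$ yields $\pi^{*}K_{Y^D}\cdot C = -2$, and the projection formula $\pi^{*}K_{Y^D}\cdot C = K_{Y^D}\cdot \pi_{*}C = 2\,K_{Y^D}\cdot C'$ then gives $K_{Y^D}\cdot C' = -1$. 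Adjunction on $Y^D$ reads $2p_{a}(C')-2 = (C')^{2} + K_{Y^D}\cdot C' = -2$, so $p_{a}(C')=0$ and $C'$ is a nonsingular rational curve with $(C')^{2} = -1$, i.e., an exceptional curve of the first kind.

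The main obstacle is the integrality verification in the first step: being in the support of $(D)$ does not a priori force integrality, so one must check this explicitly for each of the twelve components, using the local coordinates supplied by the explicit resolutions of the two $I_{10}$ fibers. Once integrality is secured, the remainder is a short intersection-number computation.
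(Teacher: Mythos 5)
Your proposal is correct and follows essentially the same route as the paper: integrality of each component of $(D)$ checked by direct local computation, Proposition \ref{insep} giving $C=\pi^{-1}(C')$ and $C'^2=-1$, then the canonical divisor formula with $K_Y\sim 0$ and adjunction on $Y^D$ yielding $\langle K_{Y^D},C'\rangle=-1$ and $p_a(C')=0$. The only difference is that you make explicit the pairwise disjointness of the twelve components (needed for $\langle (D),C\rangle=2$), which the paper uses implicitly in the step $\langle K_Y-(D),C\rangle=C^2$.
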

\begin{proof}
By direct calculation, $C$ is integral with respect to $D$. Therefore,
we have $C = \pi^{-1}(C')$ by Proposition \ref{insep}.
By the equation $2C'^2 = (\pi^{-1}(C'))^2 = C^2 = - 2$, we have $C'^2 = -1$.
Since $Y$ is a $K3$ surface, $K_Y$ is 
linearly equivalent to zero.
Therefore, we have
$$
2\langle K_{Y^D}, C'\rangle  =  \langle \pi^{*}K_{Y^D}, \pi^{*}(C')\rangle\\
    =  \langle K_Y - (D), C\rangle = C^2 = -2.
$$
Therefore, we have $\langle K_{Y^D}, C'\rangle  = -1$ and 
the arithmetic genus of $C'$ is equal to $0$.
Hence, $C'$ is an exceptional curve of the first kind.
\end{proof}

We denote these 12 exceptional curves on $Y^{D}$ by $E'_{i}$ ($i = 1, 2, \ldots, 12$),
which are the images of irreducible components of $-(D)$ by $\pi$.
Let 
$$\varphi : Y^{D} \to X_{a,b}$$ 
be the blowing-downs of $E'_{i}$ ($i = 1, 2, \ldots, 12$).  For simplicity, we denote $X_{a,b}$ by $X$.
Now we have the following commutative diagram:
$$
\begin{array}{ccc}\label{maps}
       \quad    Y^{D} & \stackrel{\pi}{\longleftarrow} & Y \\
                \varphi \downarrow &    &   \downarrow f \\
      \quad        X=X_{a,b}     &        &   {\bf P}^1 \\
           g \downarrow & \quad   \swarrow_{F}&  \\
    \quad        {\bf P}^1 &   &
\end{array}
$$
Here $F$ is the Frobenius base change.
Then, we have
$$
         K_{Y^D} = \varphi^{*}(K_{X}) + \sum_{i = 1}^{12}E'_{i}.
$$
\begin{lemma} 
The canonical divisor $K_{X}$ of $X$ is numerically equivalent to $0$.
\end{lemma}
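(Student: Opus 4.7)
The plan is to combine the canonical bundle formula for the purely inseparable quotient $\pi : Y \to Y^D$ with the blowing-down formula for $\varphi : Y^D \to X$, exploiting the fact that the exceptional divisor of $\varphi$ on $Y^D$ matches the divisorial part $-(D)$ on $Y$ under $\pi$.

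First, since $Y$ is a $K3$ surface we have $K_Y \sim 0$, so the Rudakov--Shafarevich formula (\ref{canonical}) with $p=2$ gives $\pi^{*} K_{Y^D} \sim -(D)$. The blowing-down formula for the contractions $\varphi$ of the twelve $(-1)$-curves $E'_1,\ldots,E'_{12}$ reads
$$ K_{Y^D} \sim \varphi^{*} K_X + \sum_{i=1}^{12} E'_i, $$
so pulling back under $\pi$ yields
$$ -(D) \sim \pi^{*} \varphi^{*} K_X + \sum_{i=1}^{12} \pi^{*} E'_i. $$

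Next I would identify $\sum_{i=1}^{12} \pi^{*} E'_i$ with $-(D)$. By Lemma \ref{exceptional} each $E'_i$ is the image under $\pi$ of an integral component $C_i$ of $-(D)$, and Proposition \ref{insep}(i) gives $\pi^{-1}(E'_i) = C_i$ together with $C_i^{2} = 2(E'_i)^{2} = -2$. Since $\pi^{*} E'_i$ is supported on $C_i$, writing $\pi^{*} E'_i = n_i C_i$ and using $(\pi^{*} E'_i)^{2} = 2(E'_i)^{2} = n_i^{2} C_i^{2}$ forces $n_i = 1$. The twelve curves $C_i$ are then exactly the twelve irreducible components of $-(D)$ listed in (\ref{divisorial}), so $\sum_i \pi^{*} E'_i = -(D)$ and the previous display collapses to $\pi^{*} \varphi^{*} K_X \sim 0$.

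Finally, $\pi^{*}$ and $\varphi^{*}$ are both injective on $\Num \otimes \bbQ$ (for $\pi$ by the projection formula $\pi_{*}\pi^{*} = 2$, and for $\varphi$ by $\varphi_{*}\varphi^{*} = 1$, $\varphi$ being birational), so $\pi^{*}\varphi^{*} K_X \sim 0$ forces $\varphi^{*} K_X \equiv 0$ on $Y^D$ and then $K_X \equiv 0$ on $X$ via $K_X \cdot L = \varphi^{*} K_X \cdot \varphi^{*} L$. The only non-formal step is the multiplicity check $\pi^{*} E'_i = C_i$, which I do not expect to be an obstacle; everything else is a direct consequence of the two canonical divisor formulas and the explicit description of $(D)$ in (\ref{divisorial}).
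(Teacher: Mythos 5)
Your proposal is correct and follows essentially the same route as the paper: combine $K_Y\sim 0$ with the Rudakov--Shafarevich formula and the blow-down formula, and use the integrality of the components of $(D)$ (Proposition \ref{insep}) to identify $(D)=-\pi^{*}\bigl(\sum_{i=1}^{12}E'_{i}\bigr)$, giving $\pi^{*}\varphi^{*}K_X\sim 0$ and hence $K_X\equiv 0$. Your extra multiplicity check via self-intersections and the explicit injectivity argument at the end are just elaborations of steps the paper treats as immediate.
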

\begin{proof}
As mentioned in the proof of Lemma \ref{exceptional}, all irreducible curves which appear
in the divisor $(D)$ are integral with respect to the vector field $D$.
For an irreducible component $C$ of $(D)$, we denote by $C'$ the image $\pi (C)$ of $C$.
Then, we have $C = \pi^{-1}(C')$ by Proposition \ref{insep}. Therefore, we have
$$
       (D) = - \pi^{*}(\sum_{i = 1}^{12}E'_{i}).
$$
Since $Y$ is a $K3$ surface,
$$
     0 \sim K_{Y}  = \pi^{*}K_{Y^D} + (D) 
 = \pi^{*}( \varphi^{*}(K_{X}) + \sum_{i = 1}^{12}E'_{i})  + (D) = \pi^{*}(\varphi^{*}(K_{X}))
$$
Therefore, $K_{X}$ is numerically equivalent to zero.
\end{proof}

\begin{lemma} 
The surface $X$ has $b_{2}(X) = 10$ and $c_{2}(X) = 12$.
\end{lemma}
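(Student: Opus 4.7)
The plan is to compute both invariants by chasing them along the chain $Y \xrightarrow{\pi} Y^D \xrightarrow{\varphi} X$. First I would invoke that $\pi$ is a finite purely inseparable morphism of degree $2$ between smooth projective surfaces, so it is a universal homeomorphism on the \'etale site and induces isomorphisms on $\ell$-adic cohomology for every prime $\ell \neq 2$. In particular $b_i(Y^D)=b_i(Y)$ and hence $c_2(Y^D)=c_2(Y)$. Since $Y$ is a $K3$ surface, this gives $b_2(Y^D)=22$ and $c_2(Y^D)=24$.

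Next I would check that the twelve curves $E_1',\ldots,E_{12}'$ on $Y^D$ produced by Lemma \ref{exceptional} are pairwise disjoint. Inspecting the support of $-(D)$ in (\ref{divisorial}) together with the cyclic dual graphs of the two $\I_{10}$-fibers, the twelve components $C_i$ of $-(D)$ are the ``even'' components $F_1, E_{1,2}, E_{1,4}, E_{1,6}, E_{1,8}$ of the fiber over $t=1$, the analogous five components of the fiber over $t=\infty$, and the two components $E_\omega, E_{\omega^2}$; any two of these have intersection zero on $Y$. Since each $C_i$ is integral with respect to $D$, Proposition \ref{insep} gives $\pi^{-1}(E_i')=C_i$, so for $i\neq j$
$$2\langle E_i',E_j'\rangle = \langle\pi^*E_i',\pi^*E_j'\rangle = \langle C_i,C_j\rangle = 0,$$
and the $E_i'$ are pairwise disjoint $(-1)$-curves on $Y^D$.

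Consequently $\varphi$ is a composition of twelve independent blowing-downs of $(-1)$-curves, each of which decreases both $b_2$ and $c_2$ by one. This yields $b_2(X)=22-12=10$ and $c_2(X)=24-12=12$, as desired. The only non-routine step is the first one, the transfer of the $\ell$-adic Betti numbers across the purely inseparable morphism $\pi$; everything afterwards is standard bookkeeping for successive blowing-down of disjoint $(-1)$-curves.
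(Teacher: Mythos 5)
Your proposal is correct and follows essentially the same route as the paper: transfer the Betti numbers of the $K3$ surface $Y$ across the purely inseparable morphism $\pi$ (a homeomorphism for the \'etale topology), then account for the twelve blow-downs under $\varphi$. The only cosmetic differences are that you additionally verify the twelve $(-1)$-curves $E_i'$ are pairwise disjoint and track $c_2$ directly through the blow-downs starting from $c_2(Y^D)=24$, whereas the paper recovers $c_2(X)$ as the alternating sum of the Betti numbers of $X$; both are the same bookkeeping.
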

\begin{proof}
Since $\pi : {Y} \longrightarrow {Y}^{D}$ is finite and
purely inseparable, the \'etale cohomology of $Y$ is isomorphic to 
the \'etale cohomology of $Y^{D}$. Therefore, we have
$b_{1}(Y^{D}) = b_{1}(Y) = 0$, 
$b_{3}(Y^{D})= b_{3}(Y) = 0$ and $b_{2}(Y^{D}) 
= b_{2}(Y) = 22$. Since $\varphi$ is the blowing-downs
of 12 exceptional curves of the first kind, we see
$b_{0}(X) =b_{4}(X) = 1$, $b_{1}(X) =b_{3}(X) = 0$ and $b_{2}(X) = 10$.
Therefore, we have 
$$
c_{2}(X) = b_{0}(X) - b_{1}(X) + b_{2}(X) -b_{3}(X) + b_{4}(X) = 12.
$$
\end{proof}

\begin{theorem}\label{main}
Under the notation above, the following statements hold.
\begin{itemize}
\item[$({\rm i})$] The surface $X=X_{a,b}$ is a supersingular Enriques surface 
if $a = b = 0$. 
\item[$({\rm ii})$] The surface $X=X_{a,b}$ is a classical Enriques surface 
if $a + b = ab$ and $a \notin {\bf F}_{4}$.
\end{itemize}
\end{theorem}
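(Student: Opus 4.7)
The plan is to verify that $X$ is an Enriques surface by checking the classical invariants, and then to identify its Bombieri--Mumford class by reading off the algebraic type of the vector field $D$.

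Combining the three lemmas immediately preceding the theorem, $X$ is a smooth projective surface with $K_X$ numerically trivial, $b_2(X)=10$, and $c_2(X)=12$. Minimality of $X$ is automatic: a $(-1)$-curve $E$ would satisfy $\langle K_X, E\rangle=-1$ by adjunction, contradicting $K_X\equiv 0$. Hence $X$ is a minimal smooth surface with numerically trivial canonical divisor and second Betti number $10$, so by the definition recalled in Section \ref{sec2}, $X$ is an Enriques surface, belonging to one of the three classes: classical, singular, or supersingular.

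Next, the composition $\varphi\circ\pi:Y\to X$ is a finite purely inseparable morphism of degree $2$, since $\pi$ is purely inseparable of degree $2$ while $\varphi$ is birational. Because the canonical cover of a singular Enriques surface is the separable \'etale $\bbZ/2\bbZ$-cover, $X$ cannot be singular. For the remaining two classes the canonical cover of $X$ is a purely inseparable torsor under $\mu_2$ (classical) or $\alpha_2$ (supersingular) by Bombieri--Mumford. On the other hand, by the Rudakov--Shafarevich dictionary, the purely inseparable degree-$2$ quotient $\pi$ is of type $\mu_2$ when $D$ is of multiplicative type ($D^2=fD$ with $f$ non-vanishing) and of type $\alpha_2$ when $D$ is of additive type ($D^2=0$). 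Identifying the two purely inseparable degree-$2$ structures on $X$ matches the algebraic type of $D$ with the Bombieri--Mumford class of $X$.

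It remains to compute $D^2$ in each case. In case (i), with $a=b=0$, we have $D^2=abD=0$, so $D$ is of additive type and $X$ is supersingular. In case (ii), with $a+b=ab$ and $a\notin\bbF_4$, one has $a\neq 0$ and $b=a/(a+1)\neq 0$, so $ab\in k^{\times}$; thus $D^2=abD$ with $ab$ a nonzero constant, making $D$ of multiplicative type and $X$ classical. The step I expect to be the main obstacle is the precise identification of $\varphi\circ\pi:Y\to X$ with the canonical cover of $X$, i.e., verifying that contracting the twelve $(-1)$-curves $E'_i$ does not disturb the correspondence between the group-scheme class of the Rudakov--Shafarevich quotient and the Bombieri--Mumford type. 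I would model this step on the parallel argument in \cite{KK}, Section 4.
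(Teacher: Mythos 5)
Your route---reading off the Bombieri--Mumford class of $X$ from whether $D$ is of additive or multiplicative type, via a $\mu_2/\alpha_2$ dictionary for the canonical cover---is genuinely different from the paper's, but the step you yourself flag as ``the main obstacle'' is a real gap, not a routine verification, and as stated your dictionary is not even well-defined: the type of a $2$-closed \emph{rational} vector field is not an invariant of the quotient. Multiplying by a rational function preserves the foliation and the quotient surface; indeed $D'=(t-1)D$ is the field the paper starts from, it gives the same quotient $Y^{D'}=Y^{D}$, and it satisfies $D'^2=t^2D'$, i.e.\ it looks ``multiplicative'' in both cases (i) and (ii), including $a=b=0$. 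So the computation $D^2=abD$ for one particular generator of the foliation cannot by itself decide $\mu_2$ versus $\alpha_2$; you would have to exhibit the canonical cover of $X$ as a torsor and identify the regular vector field generating the group-scheme action, which is exactly what is missing. Two further inaccuracies feed into this: $\varphi\circ\pi:Y\to X$ is \emph{not} finite (it contracts the twelve curves in the support of $(D)$), so the finite degree-$2$ purely inseparable cover you want is the normalization of $X$ in $k(Y)$ (i.e.\ $Y$ with those twelve $(-2)$-curves contracted), and identifying this normal surface with the canonical cover, together with its torsor structure, is again the unproved step; your exclusion of the singular case leans on the same identification (it can be rescued differently, e.g.\ $X$ is dominated by the K3 surface $Y$ through a universal homeomorphism followed by a blow-down, so it admits no nontrivial \'etale double cover, whereas a singular Enriques surface does).

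The paper avoids all of this. After noting that $X$ is an Enriques surface and cannot be singular because $Y$ is a supersingular K3, it distinguishes classical from supersingular by counting multiple fibers of the induced elliptic fibration $g:X\to{\bf P}^1$ and invoking Proposition \ref{multi-fiber}: the multiple fibers of $g$ are the images of the fibers of $f$ that are integral with respect to $D$, namely those over $t=a$ and $t=b$. When $a=b=0$ there is exactly one multiple fiber, hence a wild one, so $X$ is supersingular; when $a+b=ab$ and $a\notin{\bf F}_4$ there are two, necessarily tame, so $X$ is classical. If you want to keep your route, you must actually prove the $\mu_2/\alpha_2$ identification for the canonical cover (a substantial extra argument); otherwise the multiple-fiber count is the efficient repair.
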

\begin{proof}
Since $K_X$ is numerically trivial,
$X$ is minimal and the Kodaira dimension $\kappa (X) $  is equal to $0$.
Since $b_2(X) = 10$, $X$ is an Enriques surface.
Since $Y$ is a supersingular K3 surface, $X$ is either supersingular or classical.

In case that $a= b = 0$,  the integral fiber of the elliptic fibration $f : Y \longrightarrow {\bf P}^1$ 
with respect to $D$ exists only over the point $P_{0}$ defined by $t = 0$.
Hence $g : X \longrightarrow {\bf P}^1$ has only one multiple fiber.
Therefore, the multiple fiber is wild, and $X$ is a supersingular Enriques surface.
In case that $a \not\in  {\bf F}_4$, the integral fibers of 
the elliptic fibration $f : Y \longrightarrow {\bf P}^1$ 
with respect to $D$ exist over the points $P_{a}$ defined by $t = a$
and $P_b$ defined by $t = b$. Therefore, the multiple fibers are tame, and 
we conclude that $X$ is a classical Enriques surface.
\end{proof}

Recall that the elliptic fibration $f : Y \to {\bf P}^1$ given in (\ref{pencil3})
has two singular fibers of type $\I_{10}$, two singular fibers of type $\I_2$ and
ten sections.  This fibration induces an elliptic fibration
$$g : X\to {\bf P}^1$$
which has two singular fibers of type $\I_5$, two singular fibers of type $\I_1$, and
ten 2-sections.  
Thus we have twenty nonsingular rational curves on $X$.
Denote by $\calE$ the set of curves contained in the support of the divisor $(D)$:
$$\calE = \{F_{1}, E_{1,2}, E_{1,4}, E_{1,6}, E_{1,8}, F_{\infty}, E_{\infty, 2}, E_{\infty, 4}, E_{\infty, 6}, E_{\infty, 8}, E_{\omega}, E_{\omega^2}\}.$$
The singular points of four singular fibers of $g$ consist of twelve points denoted by
$\{ p_1,..., p_{12}\}$ which are the images of the twelve curves in $\calE$.
We may assume that $p_{11}, p_{12}$ are the images of $E_{\omega}, E_{\omega^2}$ respectively.  Then $p_{11}, p_{12}$ (resp. $p_1,..., p_{10}$) are the singular points of the singular fibers
of $g$ of type $\I_1$ (resp. of type $\I_5$).
Each of the twenty nonsingular rational curves passes through two points from $\{p_1,..., p_{12}\}$ because its preimage on $Y$ meets exactly two curves from twelve curves in $\calE$ (see Table \ref{Table2}). 

Let $\calS_1$ be the set of fifteen nonsingular rational curves which are ten components 
of two singular fibers of $g$ of type ${\rm I}_5$ and five 2-sections which do not pass
through $p_{11}$ and $p_{12}$, that is, the images of $s_0, s_1,..., s_4$.
Then the dual graph of the curves in $\calS_1$ is the line graph of the Petersen graph.
For the Petersen graph, see Figure \ref{petersen}.  
Here the line graph $L(G)$ of a graph $G$ is the graph 
whose vertices correspond to the edges in $G$ bijectively and two vertices in $L(G)$ are
joined by an edge iff the corresponding edges meet at a vertex in $G$.
In the following Figure \ref{enriques12}, we denote by ten dots the ten points $\{p_1,..., p_{10}\}$.  The fifteen lines denote the fifteen nonsingular rational curves in $\calS_1$.

\begin{figure}[!htb]
 \begin{center}
  \includegraphics[width=50mm]{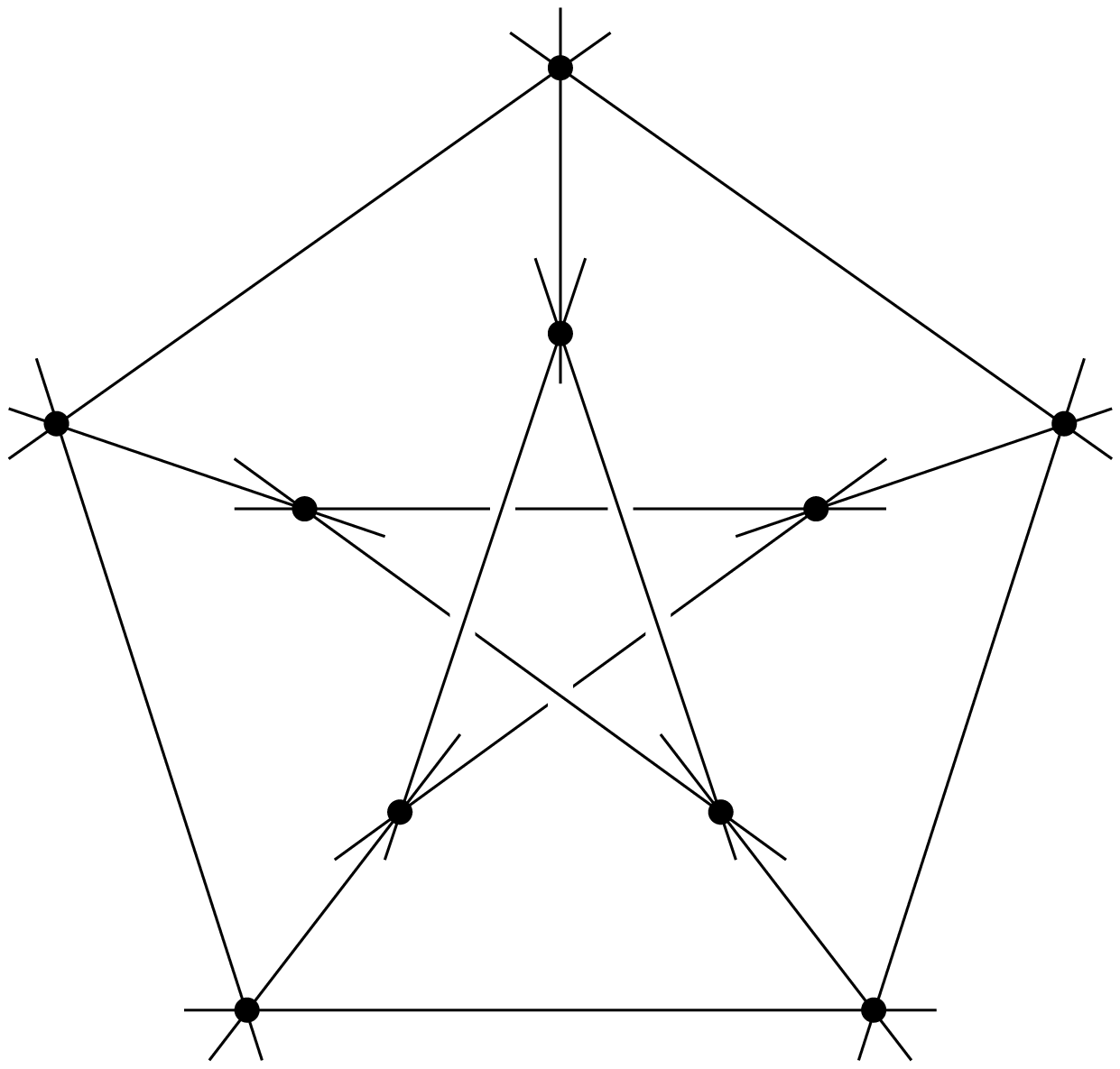}
 \end{center}
 \caption{}
 \label{enriques12}
\end{figure}

On the other hand,
let $\calS_2$ be the set of curves which are the images of $m_0,..., m_4$.
Then the dual graph of the curves in $\calS_2$ is the complete graph with
five vertices in which each pair of the vertices forms the extended Dynkin diagram of type
$\tilde{A}_1$ because all of them pass through the two points $p_{11}$ and $p_{12}$. Each vertex in $\calS_1$ meets exactly one vertex in $\calS_2$ with multiplicity 2, because
any component of the singular fibers of type $\I_{10}$ meets exactly one section from
$m_0,..., m_4$ (see Table \ref{Table2}) and $s_i$ meets only $m_i$ ($i=0,1,...,4)$ (see the equation (\ref{int-sections})).  On the other hand, the vertex in 
$\calS_2$ meets three vertices in $\calS_1$ with multiplicity 2, because $m_i$ meets
one component of each singular fiber of type $\I_{10}$ and $s_i$.
The dual graph $\Gamma$ of the twenty curves in $\calS_1$ and $\calS_2$ forms the same dual graph of
nonsingular rational curves of the Enriques surfaces of type $\VII$ given in Figure \ref{Figure7-7} (Fig. 7.7 in \cite{Ko}).

\begin{figure}[!htb]
 \begin{center}
  \includegraphics[width=60mm]{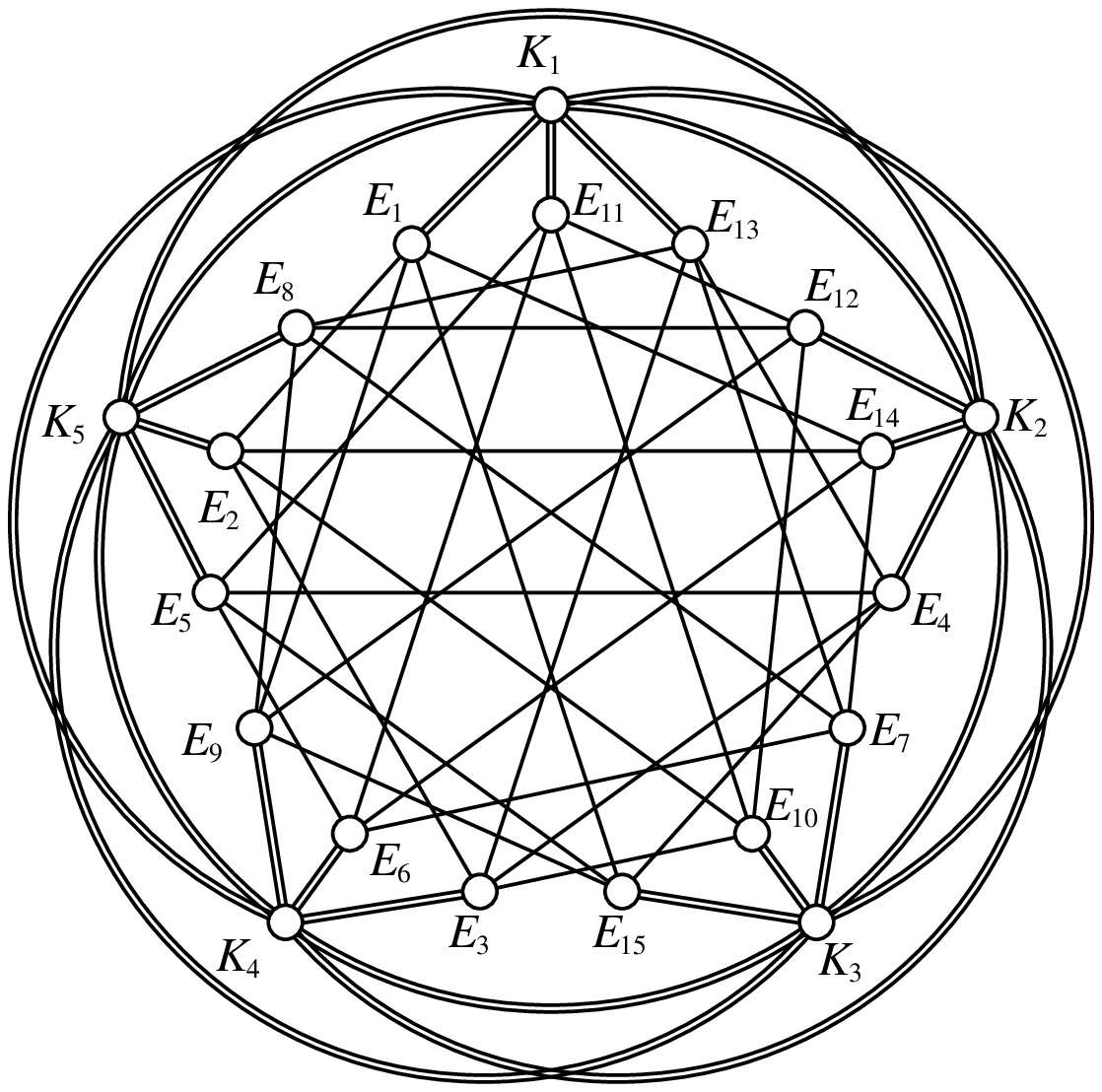}
 \end{center}
 \caption{}
 \label{Figure7-7}
\end{figure}

\noindent
The 15 curves in $\calS_1$ (resp. five curves in $\calS_2$) correspond to
$E_1, ..., E_{15}$ (resp. $K_1,..., K_5$) in Figure \ref{Figure7-7}.  It is easy to see that the maximal parabolic subdiagrams in $\Gamma$ are 
$$\tilde{A}_8,\ \tilde{A}_4\oplus \tilde{A}_4, \ \tilde{A}_5\oplus \tilde{A}_2\oplus
\tilde{A}_1, \ \tilde{A}_7\oplus \tilde{A}_1$$
which are coresponding to elliptic fibrations of type 
$$({\rm I}_9),\ ({\rm I}_5, {\rm I}_5), \ ({\rm I}_6, {\rm IV}, {\rm I}_2), \ ({\rm I}_8, {\rm III}),$$
respectively.
It follows from Vinberg's criterion (Proposition \ref{Vinberg}) that $W(X)$ is of finite index in $\O(\Num(X))$.
The same argument in \cite{Ko}, (3.7) implies that $X$ contains exactly twenty nonsingular rational curves
in $\calS_1, \calS_2$.

\begin{lemma}\label{injectiv}
The map $\rho : {\rm Aut}(X) \to {\rm O}({\rm Num}(X))$ is injective.
\end{lemma}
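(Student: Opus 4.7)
The plan is to show that any $\sigma\in\ker\rho$ must be trivial. Since $\sigma$ acts trivially on $\Num(X)$, it preserves every divisor class; because $h^0(\calO_X(C))=1$ for any $(-2)$-curve $C$, $\sigma$ preserves each of the twenty nonsingular rational curves on $X$ as a set. Moreover $\sigma$ preserves the elliptic fibration $g:X\to\bbP^1$ of type $(\I_5,\I_5,\I_1,\I_1)$, and since the four base points under the reducible fibers are fixed, the induced action on $\bbP^1$ is trivial; thus $\sigma$ preserves every fiber.

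Next I would restrict $\sigma$ to the fiber $F_0:=g^{-1}(0)$, whose reduced structure is isomorphic to the supersingular elliptic curve $E:y^2+y=x^3+x^2$. The ten two-sections meet $F_0$ at the five images of the $\bbF_4$-rational points $P_0,\ldots,P_4$ of $E$: indeed, on the K3 cover $Y$ both $s_i$ and $m_i$ pass through $P_i$, so each image point on $F_0$ is hit by two two-sections. All five points are fixed by $\sigma|_{F_0}$. For any nontrivial automorphism $\varphi(P)=\alpha(P)+t$ of $E$ the fixed locus is a coset of the separable kernel of $\alpha-1\in\mathrm{End}(E)$; since $\Aut(E,O)\cong\mathrm{SL}_2(\bbF_3)$ contains only elements of orders $1,2,3,4,6$, a short case analysis shows this separable kernel has at most three elements (the bound $3$ attained only for $\alpha$ of order $3$, whose $\alpha-1$ is a separable isogeny of degree $3$; in the remaining cases $\alpha-1$ is purely inseparable in characteristic~$2$). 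Having five fixed points therefore forces $\sigma|_{F_0}=\mathrm{id}$.

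Returning to the twenty $(-2)$-curves, a similar count on each such $\bbP^1$ produces at least three distinguished $\sigma$-fixed points (intersections with other invariant $(-2)$-curves, the singular points $p_1,\ldots,p_{12}$ of the fibration, and where applicable the fixed points on $F_0$), so $\sigma$ acts as the identity on every $(-2)$-curve. Picking a point $p$ where $F_0$ meets a two-section $S$ transversally, $\sigma$ fixes two transverse smooth curves through $p$ pointwise, and the induced action on $T_pX$ is trivial. In formal coordinates $(x,y)$ at $p$ with $F_0=\{x=0\}$ and $S=\{y=0\}$ we have $\sigma^*(x)=x(1+yv)$, $\sigma^*(y)=y(1+xw)$ for some $v,w\in k[[x,y]]$, and the identity $\sigma^n=\mathrm{id}$ (with $n=\mathrm{ord}(\sigma)$, a power of $2$) forces $v=w=0$ order by order; hence $\sigma$ is the identity in a formal neighborhood of $p$, and therefore on all of $X$.

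The main obstacle is this last step: in characteristic~$2$, trivial action on the tangent space does not imply triviality of the automorphism -- for example, $(x,y)\mapsto(x,y+x^2)$ is a nontrivial involution whose tangent action at the origin is the identity. The extra information that $\sigma$ fixes two transverse smooth curves \emph{pointwise} rather than just tangentially is what makes the formal calculation go through, and one must carry it out carefully to handle arbitrary $2$-power orders.
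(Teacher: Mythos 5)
The first half of your argument (a kernel element preserves each of the twenty $(-2)$-curves, fixes them pointwise, and preserves each fiber of $g$) is sound and runs parallel to the paper. The genuine gap is your final, local step. It is simply false in characteristic $2$ that a finite $2$-power order automorphism of $k[[x,y]]$ of the form $\sigma^{*}(x)=x(1+yv)$, $\sigma^{*}(y)=y(1+xw)$ must have $v=w=0$: take $\sigma^{*}(x)=x(1+xy^{2})$, $\sigma^{*}(y)=y/(1+xy^{2})$. Then $\sigma^{*}(xy)=xy$, and a one-line computation gives $\sigma^{*2}(x)=x$, $\sigma^{*2}(y)=y$, so $\sigma$ is a nontrivial involution; its fixed ideal is $(x^{2}y^{2},\,xy^{3})$, whose radical is $(xy)$, so $\sigma$ fixes the two transverse smooth curves $\{x=0\}$ and $\{y=0\}$ pointwise and acts trivially on the tangent space at the origin, yet it is of exactly your normal form with $v=xy$, $w=y^{2}/(1+xy^{2})\neq 0$. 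So pointwise fixing a nodal configuration of curves does not force formal triviality of a wild automorphism, and the order-by-order argument you propose cannot be carried out; the conclusion ``identity in a formal neighborhood'' is unreachable by this route.

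The paper closes the argument globally instead of locally: once the twenty curves, in particular the ten $2$-sections of $g$, are fixed pointwise, a general (smooth, genus one) fiber of $g$ contains at least five, in fact at least ten, distinct fixed points, whereas a nontrivial automorphism of a genus one curve in characteristic $2$ has at most four fixed points; hence every general fiber is fixed pointwise and the automorphism is the identity on a dense subset of $X$, so it is the identity. Note that your own intermediate step on $F_{0}$ (five fixed points on a supersingular elliptic curve force the identity) is precisely this count applied to one special fiber; applying the same count to general fibers, rather than descending to the formal computation at a point, repairs your proof and recovers the paper's argument.
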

\begin{proof}
Let $\varphi \in \Ker(\rho)$.  Then $\varphi$ preserves each nonsingular rational curve on $X$.
Since each nonsingular rational curve meets other curves at least three points,
$\varphi$ fixes all 20 nonsingular rational curves pointwisely. Now consider the elliptic fibration $g : X \to {\bf P}^1$.  Since this fibration has has ten 2-sections, $\varphi$ fixes a general fiber of $g$ and hence $\varphi$ is identity.
\end{proof}

By Proposition \ref{finiteness}, we now have the following theorem.

\begin{theorem}\label{main2}
The automorphism group ${\rm Aut}(X)$ is isomorphic to the symmetric group $\mathfrak{S}_5$ of degree five and $X$ contains exactly twenty nonsingular rational curves whose dual graph is of type ${\rm VII}$.
\end{theorem}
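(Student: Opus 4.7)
The upstream discussion has already supplied much of what we need: $X$ carries the twenty $(-2)$-curves in $\calS_1 \cup \calS_2$, their dual graph $\Gamma$ is the type $\VII$ graph of Figure \ref{Figure7-7}, and the argument of \cite{Ko}, (3.7) rules out further $(-2)$-curves, so these twenty are exhaustive; moreover $W(X) = W(\Gamma)$ has finite index in $O(\Num(X))$ by the Vinberg criterion verified just above, and $\rho$ is injective by Lemma \ref{injectiv}. The remaining work is the identification $\Aut(X) \cong \mathfrak{S}_5$.

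First I would establish the upper bound $\Aut(X) \hookrightarrow \mathfrak{S}_5$. Since $\rho$ is injective and $\rho(\Aut(X))$ preserves the ample cone, any automorphism permutes the twenty $(-2)$-curves, inducing a graph automorphism of $\Gamma$. A direct combinatorial inspection of $\Gamma$ identifies its symmetry group with $\mathfrak{S}_5$: the five vertices $K_1, \dots, K_5$ are distinguished by their high valency and form a single orbit, and the fifteen vertices $E_1, \dots, E_{15}$ are in bijection with the unordered pairs $\{K_i, K_j\}$ via the rule that each $E$ is labelled by the two $K$'s it meets with multiplicity $2$; hence every element of $\Aut(\Gamma)$ is determined by, and realized from, a permutation of $\{K_1, \dots, K_5\}$.

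For the matching lower bound, the plan is to exhibit $\mathfrak{S}_5 \subset \Aut(X)$ by producing generators. A cyclic subgroup of order $5$ comes from Mordell--Weil translations: the group of sections $s_0, \dots, s_4, m_0, \dots, m_4$ of $f : Y \to {\bf P}^1$ is ${\bf Z}/10{\bf Z}$, translations by these sections commute with the vector field $D$ (since $D$ is supported on fibral components while translation preserves the fibration), and hence descend to $X$; after the $\mu_2$- or $\alpha_2$-quotient the image is ${\bf Z}/5{\bf Z}$, cyclically permuting $\{K_1,\dots,K_5\}$. A transposition should come from the order-$4$ automorphism $\sigma$ of $Y$ displayed in the text, which swaps the two $\I_{10}$-fibers and the two $\I_2$-fibers and whose action on the ten sections is recorded in the table above; one checks that $\sigma$ normalizes $D$ up to a scalar absorbable by rescaling, whence it descends to an automorphism of $X$ whose action on $\{K_1, \dots, K_5\}$, combined with the cyclic $5$, generates $\mathfrak{S}_5$.

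The principal obstacle is the second half: verifying that $\sigma$ really does descend to an automorphism of $X$ of the correct order, and that its action on $\{K_1,\ldots,K_5\}$ is sharp enough that, together with the cyclic translation, it generates $\mathfrak{S}_5$ rather than only a proper subgroup such as ${\bf Z}/5{\bf Z}\rtimes{\bf Z}/2{\bf Z}$ or ${\bf Z}/5{\bf Z}\rtimes{\bf Z}/4{\bf Z}$. If $\sigma$ alone is inadequate, I would harvest further automorphisms from the alternative elliptic fibrations afforded by the parabolic subdiagrams $\tilde{A}_8$, $\tilde{A}_7 \oplus \tilde{A}_1$ and $\tilde{A}_5 \oplus \tilde{A}_2 \oplus \tilde{A}_1$ on $X$, each carrying its own Mordell--Weil translations permuting the twenty rational curves in different ways; a tally of these orbit structures against the upper bound $\mathfrak{S}_5$ must then pin $\Aut(X)$ down exactly.
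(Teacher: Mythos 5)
Your upper bound is exactly the paper's: finiteness of $\Aut(X)$, the fact that the twenty curves in $\calS_1\cup\calS_2$ are all the nonsingular rational curves, and Lemma \ref{injectiv} give $\Aut(X)\hookrightarrow \Aut(\Gamma)\cong\mathfrak{S}_5$. Where you diverge is the lower bound. The paper disposes of it by invoking ``the same argument as in \cite{Ko}, (3.7)'', i.e.\ every symmetry of $\Gamma$ is realized by automorphisms of $X$ constructed on $X$ itself (translations coming from the Jacobians of the several genus-one fibrations on the Enriques surface, played off against one another), whereas you propose to descend automorphisms of the $K3$ cover $Y$ through the quotient by $D$.

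There are two genuine problems with that plan. First, your reason for the descent of the Mordell--Weil translations (``$D$ is supported on fibral components while translation preserves the fibration'') is not valid: only the divisor $(D)$ is vertical, while the derivation $D=\frac{1}{t-1}D'$ contains the horizontal term $(t-a)(t-b)\frac{\partial}{\partial t}$, so a translation does not automatically satisfy $g_*D=uD$; descent has to be checked by computation. Second, and fatally for the primary plan, even if both the translations and $\sigma$ descend, they cannot generate $\mathfrak{S}_5$: $\sigma$ fixes the zero section $s_0$, hence acts on the Mordell--Weil group ${\bf Z}/10{\bf Z}$ as a group automorphism (your own table shows $\sigma^*(s_1)=s_2=2s_1$), so conjugation by $\sigma$ normalizes the cyclic group of translations and the group you obtain is at most ${\bf Z}/5{\bf Z}\rtimes{\bf Z}/4{\bf Z}$ of order $20$, a proper subgroup of $\mathfrak{S}_5$. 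Thus the ``principal obstacle'' you flag is not a risk but a certainty, and the fallback you mention --- producing further automorphisms from the fibrations of types $\tilde{A}_8$, $\tilde{A}_7\oplus\tilde{A}_1$, $\tilde{A}_5\oplus\tilde{A}_2\oplus\tilde{A}_1$ and checking they enlarge the group to all of $\mathfrak{S}_5$ --- is precisely the substantive content of the step the paper imports from \cite{Ko}, (3.7); you leave it entirely unexecuted. As it stands the inclusion $\mathfrak{S}_5\subseteq\Aut(X)$ is not established, so the proof is incomplete.
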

\begin{proof}
We have already showed that ${\rm Aut}(X)$ is finite and $X$ contains exactly twenty 
nonsigular rational curves whose dual graph $\Gamma$ is of type ${\rm VII}$.
It follows Lemma \ref{injectiv} that ${\rm Aut}(X)$ is a subgroup of $\Aut(\Gamma) \cong\mathfrak{S}_5$.  Then by the same argument in \cite{Ko}, (3.7), we see that ${\rm Aut}(\Gamma)$ is represented by automorpisms of $X$.
\end{proof}

\begin{theorem}\label{non-isotrivial}
The one dimensional family $\{X_{a,b}\}$ is non-isotrivial.
\end{theorem}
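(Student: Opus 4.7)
The plan is to exhibit a non-constant modulus for the family by analyzing the elliptic fibration $g: X_{a,b} \to \bbP^1$ constructed above. By Proposition \ref{singular-fiber} and Liu--Lorenzini--Raynaud, the Jacobian fibration $J(g)$ is a rational elliptic surface with singular fibers of types $\I_5, \I_5, \I_1, \I_1$ at $s = 1, \infty, \omega, \omega^2$; this is independent of the parameter $(a,b)$, and in fact identifies with the fixed surface $R: y^2 + sxy + y = x^3 + x^2 + s$ introduced at the start of this section. By contrast, the multiple fibers of $g$ lie above the two fibers of $f: Y \to \bbP^1$ that are integral with respect to $D$, namely those at $t = a$ and $t = b$. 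Since $D(t^2) = 0$ in characteristic $2$, the function $t^2$ lies in $k(Y)^D$, so the base morphism underlying $\pi$ is the Frobenius $s = t^2$, and the multiple fibers of $g$ sit at $s = a^2$ and $s = b^2$. Thus the isomorphism class of the elliptic surface $(X_{a,b}, g)$ encodes the six marked points $\{1, \infty, \omega, \omega^2, a^2, b^2\} \subset \bbP^1$ with their labels (type $\I_5$, type $\I_1$, or multiple), and only the positions of the last two vary with $(a,b)$.

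Second, I would argue that any isomorphism $\varphi: X_{a,b} \cong X_{a',b'}$ equates these marked configurations up to a finite ambiguity. Since $\Aut(X_{a,b}) \cong \mathfrak{S}_5$ is finite by Theorem \ref{main2}, it acts with finite orbits on the set of isotropic classes in $\Num(X_{a,b})$ whose associated genus-one pencils have fiber type $(\I_5,\I_5,\I_1,\I_1)$; these classes are identified lattice-theoretically with the parabolic subdiagrams of type $\tilde{A}_4\oplus \tilde{A}_4$ in the type $\VII$ dual graph. Hence, after composing with suitable automorphisms of source and target, we may assume $\varphi$ carries $g$ to $g_{a',b'}$. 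It then descends to an automorphism of $\bbP^1$ preserving the type-$\I_5$ locus $\{1,\infty\}$, the type-$\I_1$ locus $\{\omega,\omega^2\}$, and sending $\{a^2,b^2\}$ to $\{(a')^2,(b')^2\}$.

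The stabilizer of both $\{1,\infty\}$ and $\{\omega,\omega^2\}$ in $\PGL_2(k)$ is a finite subgroup of $\mathfrak{S}_4$, so combined with the finite choice of fibration identification, the set of $(a',b')$ with $X_{a,b} \cong X_{a',b'}$ is finite. Since the parameter space $\{(a,b) : a + b = ab\}$ is one-dimensional, the map from parameters to isomorphism classes of Enriques surfaces has finite fibers, hence is non-constant, proving non-isotriviality. The main technical obstacle is the middle step: reducing an arbitrary isomorphism of surfaces to one respecting the fixed fibration $g$. This rests on knowing that the orbit of the class of $g$ under $\Aut(X_{a,b})$ is finite and exhausts the fibrations of type $(\I_5,\I_5,\I_1,\I_1)$ on $X_{a,b}$, which follows from the finiteness of $\Aut(X_{a,b}) \cong \mathfrak{S}_5$ together with the classification of parabolic subdiagrams in the type $\VII$ graph given after Theorem \ref{main}.
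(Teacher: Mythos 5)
Your route is genuinely different from the paper's and is essentially viable, but the step you yourself flag is not justified by the reason you give. From the finiteness of $\Aut(X_{a',b'})\cong\mathfrak{S}_5$ and the finiteness of parabolic subdiagrams of type $\tilde{A}_4\oplus\tilde{A}_4$ in the type $\VII$ graph you may conclude that $X_{a',b'}$ carries only finitely many fibrations with fibers $(\I_5,\I_5,\I_1,\I_1)$, but not that they form a single $\Aut(X_{a',b'})$-orbit: transitivity does not follow from finiteness. Transitivity does in fact hold --- the $\tilde{A}_4\oplus\tilde{A}_4$ subdiagrams are the six pairs of disjoint pentagons inside the line graph of the Petersen graph, i.e.\ they correspond to the six perfect matchings (equivalently $2$-factors) of the Petersen graph, which $\Aut(\Gamma)\cong\mathfrak{S}_5$ permutes transitively, and by Theorem \ref{main2} every graph automorphism is realized on $X$ --- but this requires its own argument. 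Alternatively you can bypass transitivity entirely: if the family were isotrivial, all general members would be isomorphic to one fixed surface $X_0$; $X_0$ has only finitely many elliptic fibrations (finitely many extended Dynkin subdiagrams of $\Gamma$), each with a fixed labeled configuration on its base ($\I_5$-locus, $\I_1$-locus, multiple-fiber locus); an isomorphism $X_{a',b'}\cong X_0$ carries $g_{a',b'}$ to one of these finitely many fibrations and induces an isomorphism of bases matching labeled configurations, and since the elements of $\PGL_2(k)$ carrying $\{1,\infty\}$ and $\{\omega,\omega^2\}$ to two prescribed two-point sets form a finite set, only finitely many $\{a'^2,b'^2\}$, hence finitely many $(a',b')$, could occur, contradicting one-dimensionality. (Your identification of the Jacobian of $g$ with $R$ is also asserted rather than proved, but it is not needed; the paper's statement that $g$ has fibers $\I_5,\I_5,\I_1,\I_1$ with multiple fibers over the images of $t=a,b$ suffices.)

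For comparison, the paper's proof starts from the same key finiteness (elliptic fibrations on $X$ correspond bijectively to the finitely many extended Dynkin subdiagrams of $\Gamma$), but instead of tracking positions of marked points modulo $\PGL_2$ it uses the modulus of the multiple fiber itself: since $E_a\to E'_a$ is purely inseparable of degree $2$, the half-fiber has $j(E'_a)=a^{48}/(a+1)^{20}(a^2+a+1)^4$, which takes infinitely many values as $a$ varies, while a single surface can realize only finitely many such $j$-invariants among its finitely many fibrations; hence infinitely many isomorphism classes occur. That argument avoids any discussion of $\Aut$-orbits of fibrations and of stabilizers in $\PGL_2$, at the cost of the explicit $j$-computation; your argument, once the reduction step is repaired as above, yields the slightly sharper conclusion that each isomorphism class occurs only finitely often in the family.
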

\begin{proof}
Denote by $\Gamma$ the dual graph of all nonsingular rational curves on $X$ which is
given in Figure \ref{Figure7-7}.
$\Gamma$ contains only finitely many extended Dynkin diagrams (= the disjoint union of $\tilde{A}_m, \tilde{D}_n, \tilde{E}_k$), that is, $\tilde{A}_8, \tilde{A}_7\oplus \tilde{A}_1, \tilde{A}_4\oplus \tilde{A}_4, \tilde{A}_5\oplus \tilde{A}_2\oplus \tilde{A}_1$ (see also Kondo \cite{Ko}, page 274, Table 2).
Note that the elliptic fibrations on $X$ bijectively correspond to
the extended Dynkin diagrams in $\Gamma$. This implies that $X$ has only finitely many 
elliptic fibrations.
The $j$-invariant of the elliptic curve which appears as the fiber $E_{a}$ defined by $t = a$ 
of the elliptic fibration $f : Y \longrightarrow {\bf P}^{1}$ is equal to 
$a^{24}/(a + 1)^{10}(a^2 + a + 1)^2$ (cf. section 3). Consider
the multiple fiber $2E'_{a}$ on the elliptic fibration on the Enriques surface $X$ which is the image of $E_a$. 
Since we have a purely inseparable
morphism of degree 2 from $E_{a}$ to $E'_{a}$, we see that the $j$-invariant of $E'_{a}$
is equal to $a^{48}/(a + 1)^{20}(a^2 + a + 1)^4$.  This implies the infiniteness of the number of elliptic curves which appear as the multiple fibers of the elliptic fibration on an Enriques surface 
in our family of Enriques surfaces with parameter $a$. Therefore, in our family of 
Enriques surfaces there are infinitely many non-isomorphic ones (see  also Katsura-Kond\=o \cite{KK},
Remark 4.9).
\end{proof}

\begin{remark}
The pullback of an elliptic fibration $\pi : X\to {\bf P}^1$ to the covering
$K3$ surface $Y$ gives an elliptic fibration $\tilde{\pi} : Y\to {\bf P}^1$. The type of reducible singular fibers of $\tilde{\pi}$ is $({\rm I}_{10}, {\rm I}_{10}, {\rm I}_2, {\rm I_2})$ if $\pi$ is of type $\tilde{A}_4\oplus \tilde{A}_4$, $({\rm I}_{16}, {\rm I}_1^*)$
if $\pi$ is of type $\tilde{A}_7\oplus \tilde{A}_1$, $({\rm I}_{12}, {\rm III}^*, {\rm I}_4)$ if $\pi$ is of type $\tilde{A}_5\oplus \tilde{A}_2\oplus \tilde{A}_1$, and 
type $({\rm I}_{18}, {\rm I}_2, {\rm I}_{2}, {\rm I}_2)$ if $\pi$ is of type $\tilde{A}_8$, respectively.
\end{remark}

The following theorem is due to M. Sch\"utt and H. Ito.
\begin{theorem}\label{non-existVII}
There are no singular Enriques surfaces with the dual graph of type ${\rm VII}$.  
\end{theorem}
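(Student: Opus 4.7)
The plan is to assume for contradiction that $X$ is a singular Enriques surface whose dual graph of nonsingular rational curves is of type $\VII$, and derive a contradiction by transferring data to the étale K3 double cover $\pi : Y \to X$. This cover exists precisely because $X$ is singular; by Appendix A it is a smooth K3 surface of height $1$ (i.e.\ ordinary), hence of Picard rank at most $20$. By Proposition \ref{Vinberg} applied to the four maximal parabolic subdiagrams $\tilde{A}_8$, $\tilde{A}_7 \oplus \tilde{A}_1$, $\tilde{A}_4 \oplus \tilde{A}_4$, $\tilde{A}_5 \oplus \tilde{A}_2 \oplus \tilde{A}_1$ of the type $\VII$ dual graph (as tabulated in the discussion preceding Theorem \ref{main2}), the surface $X$ carries exactly four elliptic fibrations; using Proposition \ref{singular-fiber}, their reducible fiber configurations are $(\I_9)$, $(\I_8, \III)$, $(\I_5, \I_5)$ and $(\I_6, \IV, \I_2)$.

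On each such $g : X \to \mathbb{P}^1$, Proposition \ref{multi-fiber}(ii) forces a unique wild multiple fiber $2F$ with $F$ either a smooth ordinary elliptic curve or a singular fiber of multiplicative type $\I_n$; in particular the additive $\III$ and $\IV$ fibers appearing in two of the four configurations cannot be multiple. I would then take the Stein factorization of $g \circ \pi$, which is of the form $Y \to C \to \mathbb{P}^1$; since $H^1(Y, \mathcal O_Y) = 0$, the smooth curve $C$ is a $\mathbb{P}^1$, and a direct case analysis of the restriction of the $2$-torsion line bundle $\mathcal L$ defining $\pi$ to each fiber of $g$ pins down the singular fiber configuration of the induced genus-one fibration on $Y$: non-multiple fibers split into two disjoint copies on $Y$ if $\mathcal L$ restricts trivially and remain connected, as a nontrivial étale $\mathbb{Z}/2$-cover, otherwise; the multiple fiber $F$ always pulls back to its connected étale $\mathbb{Z}/2$-cover (another ordinary elliptic curve if $F$ is smooth, and $\I_{2n}$ if $F = \I_n$), because the geometry of the multiple fiber forces $\mathcal L|_F$ to be the nontrivial $2$-torsion element of $\mathrm{Pic}(F)[2]$.

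With this translation in hand, I would enumerate the four fibrations together with the finitely many admissible choices of multiple fiber type, tabulate the induced reducible-fiber configuration on $Y$ in each case, and check it against: the Shioda--Tate rank bound combined with $\rho(Y) \le 20$; Proposition \ref{singular-fiber} applied to the elliptic K3 fibration on $Y$; and the requirement that the forty $(-2)$-curves obtained as preimages of the twenty curves of $\Gamma$ form a prescribed configuration inside $\Num(Y)$ whose sublattice rank and discriminant are computable from the intersection data on $X$. The main obstacle, I expect, is the case in which the multiple fiber is a smooth ordinary elliptic curve, since the Euler-number and Shioda--Tate checks are then automatic: here the contradiction must come from a finer invariant, most plausibly by showing that the sublattice of $\Num(Y)$ generated by the forty lifted rational curves has rank exceeding $20$, which is incompatible with the ordinariness of the K3 cover of any singular Enriques surface.
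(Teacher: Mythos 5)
There is a genuine gap, and it occurs exactly where you write that ``the additive $\III$ and $\IV$ fibers appearing in two of the four configurations cannot be multiple.'' The whole point of the paper's (two-line) proof is the opposite: the dual graph of type ${\rm VII}$ \emph{forces} the additive fiber to be the multiple one. Concretely, the parabolic subdiagram $\tilde{A}_5\oplus\tilde{A}_2\oplus\tilde{A}_1$ gives an elliptic fibration whose reducible fibers are $(\IV,\I_2,\I_6)$ by Proposition \ref{singular-fiber}, and in Figure \ref{Figure7-7} the triangle $E_1+E_2+E_{14}$ realizing the $\tilde{A}_2$ is met by the curve $E_3$ with total intersection number $1$. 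On a singular Enriques surface every elliptic fibration has exactly one multiple fiber, of multiplicity $2$, so the full fiber class is twice the half-fiber class and every horizontal curve meets a non-multiple fiber an even number of times; hence $E_1+E_2+E_{14}$ cannot be a whole fiber and must be the half-fiber, i.e.\ the wild multiple fiber is of type $\IV$, additive. This directly contradicts Proposition \ref{multi-fiber}(ii), and the proof ends there, entirely on $X$, with no need of the covering $K3$ surface. By treating Proposition \ref{multi-fiber}(ii) as a selection rule for which fiber may be multiple, rather than reading off from the graph which fiber \emph{is} multiple, you discarded the immediate contradiction.

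Moreover, the fallback you propose for the remaining case (multiple fiber a smooth ordinary curve) --- showing that the forty lifted $(-2)$-curves span a sublattice of $\Num(Y)$ of rank exceeding $20$, against $\rho(Y)\le 20$ for the height-one cover --- cannot succeed. Over the complex numbers the type ${\rm VII}$ Enriques surface exists, and its $K3$ cover carries the same configuration of forty nonsingular rational curves inside a N\'eron--Severi lattice of rank at most $20$; so this configuration is perfectly compatible with $\rho\le 20$, and no purely lattice-theoretic rank count of the lifted curves can produce the desired contradiction. As written, the argument is therefore incomplete at its decisive step, in addition to missing the short direct route.
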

\begin{proof}
Assume that there exists an Enriques surface $S$ with the dual graph of type ${\rm VII}$.  
In the dual graph of type {\rm VII} there exists a parabolic subdiagram $\tilde{A}_5\oplus \tilde{A}_2 \oplus \tilde{A}_1$.  By Proposition \ref{singular-fiber}, it corresponds to an elliptic fibration on $S$ with singular fibers of type $({\rm IV}, {\rm I}_2, {\rm I}_6)$.  For example, the linear system $|2(E_1+E_2+E_{14})|$ defines a such fibration.  Moreover the dual graph of type ${\rm VII}$ tells us that the singular fiber $E_1+E_2+E_{14}$ of type ${\rm IV}$ is a multiple fiber because $E_3$ is a 2-section of this fibration (see Figure \ref{Figure7-7}).  This contradicts to Proposition \ref{multi-fiber}, (ii).
\end{proof}

\section{Examples of singular $K3$ surfaces with a finite automorphism group}\label{sec4}

\subsection{Type $\I$}\label{type1}
Let $(x_0,x_1,x_2,x_3)$ be a homogeneous coodinate of ${\bf P}^3$.
Consider the nonsingular quadric $Q$ in ${\bf P}^3$ defined by
\begin{equation}\label{type1quadric}
x_0x_3 + x_1x_2=0
\end{equation}
which is the image of the map
$${\bf P}^1\times {\bf P}^1 \to {\bf P}^3,\quad ((u_0,u_1),(v_0,v_1)) \to (u_0v_0, u_0v_1, u_1v_0, u_1v_1).$$
The involution of ${\bf P}^1\times {\bf P}^1$ 
$$((u_0,u_1),(v_0,v_1)) \to ((u_1,u_0),(v_1,v_0))$$
induces an involution
\begin{equation}\label{typeIinv}
\tau : (x_0,x_1,x_2,x_3) \to (x_3,x_2,x_1,x_0)
\end{equation}
of $Q$ whose fixed point set on $Q$ is one point $(1,1,1,1)$.
Consider four lines on $Q$ defined by
$$L_{01}:x_0=x_1=0,\quad L_{02}: x_0=x_2=0,$$  
$$L_{13}: x_1=x_3=0, \quad L_{23}: x_2=x_3=0,$$
and a $\tau$-invariant pencil of quadrics
$$C_{\lambda,\mu} : \lambda (x_0+x_3)(x_1+x_2)+ \mu x_0x_3 =0$$
passing through the four vertices 
$$(1,0,0,0), \quad (0,1,0,0),\quad (0,0,1,0),\quad (0,0,0,1)$$ 
of the quadrangle $L_{01}, L_{02}, L_{13}, L_{23}$.  
Note that two conics
$$Q_1: x_0+x_3=0, \quad Q_2: x_1+x_2=0$$
tangent to $C_{\lambda,\mu}$ at two vertices of the quadrangle.  
Obviously 
$$C_{1,0} = Q_1 +Q_2, \quad C_{0,1} = L_{01}+L_{02} + L_{13}+L_{23},$$ 
and 
$C_{\lambda,\mu}$ $(\lambda\cdot \mu\not=0)$ is a nonsingular elliptic curve.
Thus we have the same configuration of
curves given in \cite{Ko}, Figure 1.1 except $Q_1$ and $Q_2$ tangent at $(1,1,1,1)$.

Now we fix $(\lambda_0, \mu_0)\in {\bf P}^1$ $(\lambda_0\cdot \mu_0\not=0)$ and
take Artin-Schreier covering $S \to Q$ defined by the triple
$(L, a, b)$ where $L= \calO_Q(2,2)$, $a \in H^0(Q,L)$ and $b\in H^0(Q,L^{\otimes 2})$
satisfying $Z(a) = C_{0,1}$ and 
$Z(b) = C_{0,1} + C_{\lambda_0,\mu_0}$.
The surface $S$ has four singular points over the four vertices of quadrangle given
locally by $z^2 +uvz + uv(u+v)=0$.  In the notation in Artin's list (see \cite{A}, \S 3), it is of type $D^1_4$.  Let $Y$ be the minimal nonsingular model of $S$.
Then the exceptional divisor over a singular point has the dual graph of type $D_4$.
The canonical bundle formula implies that $Y$ is a $K3$ surface.
The pencil $\{C_{\lambda,\mu}\}_{(\lambda,\mu)\in {\bf P}^1}$ induces an elliptic fibration on $Y$.
The preimage of $L_{01}+L_{02} + L_{13}+L_{23}$ is the singular fiber of type $\I_{16}$
and the preimage of $Q_1+Q_2$ is the union of two singular fibers of type $\III$.
Note that the pencil has four sections.  Thus we have 24 nodal curves on $Y$.
Note that the dual graph of these 24 nodal curves coincide with the one given in
\cite{Ko}, Figure 1.3. 
The involution $\tau$
can be lifted to a fixed point free involution $\sigma$ of $Y$ because the branch divisor $C_{0,1}$ does not contain the point $(1,1,1,1)$.  By taking the quotient
of $Y$ by $\sigma$, we have a singular Enriques surface $X=Y/\langle \sigma \rangle$.
The above elliptic fibration induces an elliptic pencil on $X$ with singular fibers of type $\I_8$ and of type $\III$.   
Since the ramification divisor of the covering $S\to Q$ is the preimage of $L_{01}+L_{02} + L_{13}+L_{23}$, the multiple fiber of this pencil is the singular fiber of type $\I_8$.
By construction, $X$ contains twelve nonsingular rational curves whose dual graph coincides with the one given in \cite{Ko}, Figure 1.4. 
It follows from Vinberg's criterion (Proposition \ref{Vinberg}) that $W(X)$ is of finite index in $\O(\Num(X))$, and hence
the automorphism group $\Aut(X)$ is finite (Proposition \ref{finiteness}). The same argument as in the proof of \cite{Ko}, Theorem 3.1.1 shows that $\Aut(X)$ is isomorphic to the digedral group $D_4$ of order 8.  Thus we have the following theorem.

\begin{theorem}\label{Ithm}
These $X$ form a one dimensional family of singular Enriques surfaces whose dual graph of nonsingular rational curves is of type ${\rm I}$.  The automorphism group ${\rm Aut}(X)$ is isomorphic to the dihedral group $D_4$ of order $8$.
\end{theorem}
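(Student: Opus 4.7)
The construction preceding the statement has already produced, for each parameter $(\lambda_0:\mu_0)\in {\bf P}^1$ with $\lambda_0\mu_0\neq 0$, an Enriques surface $X=X_{\lambda_0,\mu_0}$ equipped with twelve nonsingular rational curves whose dual graph coincides with the type ${\rm I}$ diagram of \cite{Ko}, Figure 1.4. What remains is to verify three things: that $X$ is \emph{singular} in Bombieri--Mumford's trichotomy, that the twelve curves exhaust the nodal curves of $X$, and that $\Aut(X)\cong D_4$.

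For the first point, since the branch divisor $C_{0,1}=L_{01}+L_{02}+L_{13}+L_{23}$ of the Artin--Schreier cover $S\to Q$ avoids the unique $\tau$-fixed point $(1,1,1,1)\in Q$, the lifted involution $\sigma$ on the minimal resolution $Y$ is fixed-point-free. Hence $Y\to X$ is a separable étale double cover and, together with $Y$ being K3, Bombieri--Mumford's classification forces $X$ to be singular Enriques. The unique wild multiple fiber of the induced pencil $g:X\to {\bf P}^1$ is then the ${\rm I}_8$-fiber, which is the only compatible option in Proposition \ref{multi-fiber} (ii), matching the fact that the ramification of $S\to Q$ lies over the quadrangle.

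For $\Aut(X)$, the plan is first to enumerate the maximal parabolic subdiagrams of the dual graph $\Gamma$ of type ${\rm I}$ read off from Figure 1.4 of \cite{Ko}; each has rank $8$ and corresponds to an elliptic fibration on $X$ (the sub-diagram $\tilde A_7\oplus \tilde A_1$ recovers the fibration $g$ itself). Proposition \ref{Vinberg} then gives $[\O(\Num X):W(X)]<\infty$, so $\Aut(X)$ is finite by Proposition \ref{finiteness}, and the same argument as in \cite{Ko}, (3.7), shows that the twelve curves of our configuration are the only nodal curves. The injectivity argument of Lemma \ref{injectiv} (every nodal curve in $\Gamma$ meets at least three others, and the pencil has a section represented in $\Gamma$) gives $\rho:\Aut(X)\hookrightarrow \Aut(\Gamma)$; an elementary inspection of Figure 1.4 yields $\Aut(\Gamma)\cong D_4$.

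The main obstacle is the converse inclusion: producing explicit automorphisms of $X$ realizing all of $D_4$. I expect these to come from the projective symmetries of the data $(Q;L_{01},L_{02},L_{13},L_{23};Q_1,Q_2;C_{\lambda_0,\mu_0})$ that commute with $\tau$. Under the parametrization $Q={\bf P}^1\times {\bf P}^1$, the obvious candidates are the swap of the two rulings of $Q$ and the pair of involutions $((u_0{:}u_1),(v_0{:}v_1))\mapsto ((u_1{:}u_0),(v_0{:}v_1))$ and its companion with the roles of $u,v$ exchanged; these preserve the branch divisor and commute (up to $\sigma$) with the Artin--Schreier cover, so they descend to $X$ and generate a $D_4$ inside $\Aut(X)$, closing the computation exactly as in \cite{Ko}, Theorem 3.1.1.
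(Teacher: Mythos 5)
The first half of your argument coincides with the paper's: the fixed-point-freeness of $\sigma$ (because the branch divisor $C_{0,1}$ misses $(1,1,1,1)$) makes $Y\to X$ a separable \'etale double cover of a $K3$, hence $X$ is a singular Enriques surface; the induced fibration has fibers of type ${\rm I}_8$ and ${\rm III}$ with the ${\rm I}_8$ fiber multiple; the twelve curves give the type ${\rm I}$ graph; and Vinberg's criterion plus Proposition \ref{finiteness} give finiteness of $\Aut(X)$, with the argument of \cite{Ko}, (3.7)/Theorem 3.1.1 showing these are all the nodal curves. Up to there you are reproducing the paper's proof.

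The computation of $\Aut(X)\cong D_4$ is where there is a genuine gap. You claim that the argument of Lemma \ref{injectiv} gives $\rho:\Aut(X)\hookrightarrow \Aut(\Gamma)$, but $\rho$ is \emph{not} injective for this family: the deck transformation of the Artin--Schreier covering $S\to Q$ descends to a numerically trivial involution of $X$ -- this is exactly what the paper records in Remark \ref{typeInumtrivial} and exploits in Appendix \ref{kummer}, and in Kond\=o's complex type ${\rm I}$ surface this involution is the central element of the $D_4$. So the type VII injectivity argument cannot transfer to the twelve-curve configuration (and any proof that does transfer would contradict the existence of this involution); you must bound the kernel of $\rho$ separately (e.g.\ via Dolgachev \cite{D2}) and treat the image separately. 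Your realization of $D_4$ is also flawed as stated: the group of projective symmetries you list (ruling swap, $u$-swap, $v$-swap) is indeed a $D_4$ commuting with the data, but its central element is $\tau$ itself, which lifts to $\sigma$ and therefore induces the \emph{identity} on $X=Y/\langle\sigma\rangle$; so these symmetries alone give at most a group of order $4$ in $\Aut(X)$. To reach order $8$ you must adjoin the Artin--Schreier deck involution (the numerically trivial one) and then verify that the resulting extension is dihedral rather than, say, elementary abelian. The paper avoids all of this by invoking the same argument as in \cite{Ko}, Theorem 3.1.1; your sketch, as written, neither proves the upper bound $|\Aut(X)|\le 8$ nor correctly produces the lower bound.
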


\begin{theorem}\label{non-existI}
There are no
classical and supersingular Enriques surfaces with the dual graph of type ${\rm I}$. 
\end{theorem}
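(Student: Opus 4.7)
My plan is to follow the same strategy as the proof of Theorem \ref{non-existVII}: within the type I dual graph, locate an elliptic pencil whose forced multiple fiber has a Kodaira type that is forbidden by Proposition \ref{multi-fiber}(i) and (iii). Assume $S$ is an Enriques surface in characteristic $2$ whose dual graph of nonsingular rational curves is of type I. From the shape of the type I dual graph (see \cite{Ko}, Figure 1.4, and Subsection \ref{type1} above) I would single out the parabolic subdiagram of type $\tilde{A}_7\oplus\tilde{A}_1$. By Proposition \ref{singular-fiber} it corresponds to an elliptic fibration $g:S\to\bbP^1$ with reducible singular fibers of type $(\I_8,\III)$. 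Ten of the twelve nonsingular rational curves of the type I graph make up the components of these two fibers, and the remaining two nodal curves are multi-sections of $g$.

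The decisive combinatorial point is that the $\I_8$ must be a half-fiber of $g$. The dual graph records the intersection numbers, and inspection shows that each of the two multi-sections meets the $\I_8$ cycle in a single component with multiplicity one. Every genus-one fibration on an Enriques surface possesses at least one half-fiber $2F_0$, so sections cannot exist: a section $E$ would satisfy $1=E\cdot(2F_0)$, forcing $E\cdot F_0 = 1/2$, which is absurd. Hence a multi-section with $E\cdot \I_8=1$ forces the general fiber class to equal $2\cdot \I_8$, so that $\I_8$ is itself a half-fiber (of multiplicity two).

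I would then invoke Proposition \ref{multi-fiber}. The Kodaira type $\I_8$ is of multiplicative type, so part (i) rules out the classical case (in which both tame multiple fibers must be ordinary elliptic curves or singular of additive type) and part (iii) rules out the supersingular case (in which the unique wild multiple fiber must be a supersingular elliptic curve or additive). Both conclusions contradict the existence of such an $S$, proving the theorem. The step I expect to be most delicate is the combinatorial claim that each multi-section meets the $\I_8$ subdiagram in exactly one node with multiplicity one; this can be extracted from \cite{Ko}, Figure 1.4, but if it is not immediately transparent one can fall back on the explicit construction in Subsection \ref{type1}, which realizes the type I dual graph with precisely this incidence pattern, and note that any Enriques surface whose dual graph is of type I must share the same combinatorial data by definition.
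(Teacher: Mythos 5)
Your proposal is correct and follows essentially the same route as the paper: the type I graph forces an elliptic fibration (from the $\tilde{A}_7\oplus\tilde{A}_1$ subdiagram) whose $\I_8$ fiber must be multiple, contradicting Proposition \ref{multi-fiber} in the classical and supersingular cases. The paper states this in two sentences; your argument that the $\I_8$ is a half-fiber (a multi-section meeting the cycle once, plus the nonexistence of sections) is exactly the detail being taken for granted there, and it checks out against the incidence data of the type I configuration.
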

\begin{proof}
From the dual graph of type $\I$, we can see that such Enriques surface has an elliptic fibration with
a multiple fiber of type $\I_8$.  The assersion now follows from Proposition \ref{multi-fiber}.
\end{proof}

\begin{remark}\label{typeInumtrivial}
In the above, we consider special quadrics $C_{\lambda, \mu}$ tangent to $Q_1, Q_2$.
If we drop this condition and consider general $\tau$-invariant quadrics through
the four vertices of the quadrangle $L_{01}, L_{02}, L_{13}, L_{23}$, we have a two dimensional family of singular Enriques surfaces $X$.  The covering transformation of $Y \to S$ descends to a numerically trivial involution of $X$, that is, an involution of $X$ acting trivially on $\Num(X)$. In the appendix \ref{kummer}, we discuss Enriques surfaces with a numerically trivial involution.
\end{remark}

\subsection{Type $\II$}\label{type2}
We use the same notation as in (\ref{type1}).
We consider a $\tau$-invariant pencil of quadrics defined by
$$C_{\lambda,\mu} : \lambda (x_0+x_1+x_2+x_3)^2+ \mu x_0x_3 =0 
$$
which tangents to the quadrangle $L_{01}, L_{02}, L_{13}, L_{23}$
at $(0,0,1,1)$, $(0,1,0,1)$, $(1,0,1,0)$, $(1,1,0,0)$ respectively.
Let
$$L_1: x_0+x_1=x_2+x_3=0,\quad L_2: x_0+x_2=x_1+x_3=0$$
be two lines on $Q$ which passes the tangent points of $C_{\lambda,\mu}$ and
the quadrangle $L_{03}, L_{12}, L_{02}, L_{13}$.
Note that 
$$C_{1,0}= 2L_1+ 2L_2, \quad C_{0,1}= L_{01}+L_{02} + L_{13}+L_{23},$$ 
and 
$C_{\lambda,\mu}$ $(\lambda\cdot \mu\not=0)$ is a nonsingular elliptic curve.
Thus we have the same configuration of
curves given in \cite{Ko}, Figure 2.1.

Now we fix $(\lambda_0, \mu_0)\in {\bf P}^1$ $(\lambda_0 \cdot \mu_0 \not=0)$ and
take Artin-Schreier covering $S \to Q$ defined by the triple
$(L, a, b)$ where $L= \calO_Q(2,2)$, $a \in H^0(Q,L)$ and $b\in H^0(Q,L^{\otimes 2})$
satisfying $Z(a) = C_{0,1}$ and 
$Z(b) = C_{0,1} + C_{\lambda_0,\mu_0}$.
The surface $S$ has four singular points over the four tangent points 
of $C_{\lambda_0,\mu_0}$ with the quadrangle and 
four singular points over the four vertices of the quadrangle.  
A local equation of each of the first four singular points
is given by $z^2 +uz + u(u+v^2)=0$ and the second one is given by $z^2 + uvz + uv=0$. 
In the first case, by the change of coordinates
$$
t=z +\omega u + v^2,\quad s = z +\omega^2 u + v^2,\quad v = v
$$ 
($\omega^3=1$, $\omega\not= 1$), then
we have $v^4 +ts=0$ which gives a rational double point of type $A_3$.  In the second case, obviously, it is a rational double point of type $A_1$.
Let $Y$ be the minimal nonsingular model of $S$.
Then the exceptional divisor over a singular point in the first case has the dual graph of type $A_3$ and in the second case the dual graph of type $A_1$.
The canonical bundle formula implies that $Y$ is a $K3$ surface.
The pencil $\{C_{\lambda,\mu}\}_{(\lambda,\mu)\in {\bf P}^1}$ induces an elliptic fibration on $Y$.
The preimage of $L_{01}+L_{02} + L_{13}+L_{23}$ is the singular fiber of type $\I_{8}$
and the preimage of $C_{1,0}$ is the union of two singular fibers of type $\I_1^*$.
Note that the pencil has four sections.  Thus we have 24 nodal curves on $Y$.
Note that the dual graph of these 24 nodal curves coincide with the one given in
\cite{Ko}, Figure 2.3. 
The involution $\tau$
can be lifted to a fixed point free involution $\sigma$ of $Y$ because the branch divisor $C_{0,1}$ does not contain the point $(1,1,1,1)$.  By taking the quotient
of $Y$ by $\sigma$, we have a singular Enriques surface $X=Y/\langle \sigma \rangle$.
The above elliptic fibration induces an elliptic pencil on $X$ with singular fibers of type $\I_4$ and of type $\I_1^*$.   
Since the ramification divisor of the covering $S\to Q$ is the preimage of $L_{01}+L_{02} + L_{13}+L_{23}$, the multiple fiber of this pencil is the singular fiber of type $\I_4$.
By construction, $X$ contains twelve nonsingular rational curves whose dual graph $\Gamma$ coincides with the one
given in \cite{Ko}, Figure 2.4.  The same argument as in the proof of \cite{Ko}, Theorem 3.2.1 shows that $W(X)$ is of finite index in $\O(\Num(X))$ and $X$ contains only these twelve nonsingular rational curves.  It now follows from Proposition \ref{finiteness} that
the automorphism group $\Aut(X)$ is finite.  
By the similar argument as in the proof of Lemma \ref{injectiv}, we see that 
the map $\rho : \Aut(X) \to \O(\Num(X))$ is injective. Moreover, by the same argument 
as in the proof of \cite{Ko}, Theorem 3.2.1, $\Aut(X)$ is isomorphic to $\Aut(\Gamma) \cong \mathfrak{S}_4$.
Thus we have the following theorem.

\begin{theorem}\label{IIthm}
These $X$ form a one dimensional family of singular Enriques surfaces whose dual graph of nonsingular rational curves is of type ${\rm II}$.  The automorphism group ${\rm Aut}(X)$ is isomorphic to the symmetric group $\mathfrak{S}_4$ of degree four.
\end{theorem}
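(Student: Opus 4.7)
The plan is to verify the claim by following the construction already laid out above and completing three remaining tasks: (i) confirming that $X$ is a singular Enriques surface varying in a one-dimensional family, (ii) identifying all nodal curves on $X$ and checking that their dual graph is of type $\II$, and (iii) computing $\Aut(X)$.

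For (i), I would first reconfirm that $Y$ is a $K3$ surface by analyzing the Artin-Schreier cover $S\to Q$. The four singularities over the tangent points of $C_{\lambda_0,\mu_0}$ with the quadrangle are rational double points of type $A_3$ (after the coordinate change given in the text), and the four singularities over the vertices are of type $A_1$; then the canonical bundle formula for Artin-Schreier covers yields $K_Y\sim 0$, and a direct Euler number count gives $b_2(Y)=22$. The involution $\tau$ on $Q$ has its unique fixed point $(1,1,1,1)$ off the branch divisor $C_{0,1}$, so it lifts to a fixed-point-free involution $\sigma$ on $Y$; hence $X=Y/\langle\sigma\rangle$ is an Enriques surface, and since $Y\to X$ is an \'etale $\mathbb{Z}/2\mathbb{Z}$-cover, $X$ is singular by Bombieri-Mumford. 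The family is parametrized by $(\lambda_0:\mu_0)\in \mathbb{P}^1$ with $\lambda_0\mu_0\ne 0$, giving a one-dimensional family (non-isotriviality being checked by an argument analogous to Theorem \ref{non-isotrivial}).

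For (ii), the pencil $\{C_{\lambda,\mu}\}$ on $Q$ induces an elliptic fibration $g:X\to\mathbb{P}^1$. The preimage on $Y$ of the quadrangle $L_{01}+L_{02}+L_{13}+L_{23}$ is a singular fiber of type $\I_8$ and descends on $X$ to the multiple fiber of type $\I_4$ (this being the ramification locus); the preimage of $C_{1,0}=2L_1+2L_2$ yields on $X$ a fiber of type $\I_1^*$. Combined with the four sections coming from the four vertices, this produces exactly twelve nodal curves on $X$ whose incidence matches Figure 2.4 of \cite{Ko}, i.e. the dual graph of type $\II$. To show these are \emph{all} nodal curves, I would apply Vinberg's criterion (Proposition \ref{Vinberg}) to the twelve vectors: verify that every maximal parabolic subdiagram of $\Gamma$ has rank $8$, so that $W(\Gamma)$ has finite index in $\O(\Num(X))$, and then follow the argument of \cite{Ko}, Theorem 3.2.1 to rule out additional $(-2)$-curves.

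For (iii), finiteness of $\Aut(X)$ is now immediate from Proposition \ref{finiteness}. Injectivity of $\rho:\Aut(X)\to\O(\Num(X))$ follows as in Lemma \ref{injectiv}: any element of the kernel must fix every nodal curve setwise, hence pointwise (since each such curve meets at least three others), and then fix a general fiber of $g$ via its many $2$-sections. This embeds $\Aut(X)\hookrightarrow\Aut(\Gamma)\cong\mathfrak{S}_4$. The main obstacle, as usual in these classifications, is the \emph{surjectivity}: realizing every graph automorphism by an actual automorphism of $X$. I would do this by exhibiting explicit projective symmetries of $Q$ in the coordinates $(x_0,x_1,x_2,x_3)$ that permute the four vertices of the quadrangle $L_{01},L_{02},L_{13},L_{23}$ while preserving both $C_{0,1}$ and $C_{\lambda_0,\mu_0}$; such linear transformations generate a copy of $\mathfrak{S}_4$ commuting with $\tau$ modulo $\tau$, which lifts to $Y$ and descends to $X$, giving the required automorphisms. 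Running the parallel argument of \cite{Ko}, Theorem 3.2.1 in characteristic 2 is the technical heart of the verification, but since the graph-theoretic data is identical to the complex case it goes through with only formal modification.
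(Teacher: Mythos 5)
Your steps (i) and (ii) follow the paper's own route: the singularity analysis of the Artin--Schreier cover, the canonical bundle formula, the lift of $\tau$ to a free involution, the descent of the elliptic fibration with the multiple ${\rm I}_4$ fiber, and the appeal to Vinberg's criterion plus the argument of \cite{Ko}, Theorem 3.2.1 to see that the twelve curves are all of them; the injectivity of $\rho$ is also handled exactly as in the paper (via Lemma \ref{injectiv}). (A minor slip: the four sections of the fibration on $Y$ arise from the base points of the pencil, i.e.\ the four tangent points $(0,0,1,1),\dots$, not from the vertices of the quadrangle, and on $X$ they give two $2$-sections, not four curves.)

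The genuine gap is in your surjectivity step. You propose to realize all of $\Aut(\Gamma)\cong\mathfrak{S}_4$ by linear projective symmetries of $Q$ permuting the four vertices and preserving $C_{0,1}$ and $C_{\lambda_0,\mu_0}$, but no such copy of $\mathfrak{S}_4$ exists. A coordinate permutation preserving $Q: x_0x_3+x_1x_2=0$ must preserve the partition $\{\{0,3\},\{1,2\}\}$, and any element of ${\rm Aut}(Q)\cong({\rm PGL}_2\times{\rm PGL}_2)\rtimes{\bf Z}/2$ preserving the four lines of the quadrangle together with the four tangent (base) points $(0,0,1,1),(0,1,0,1),(1,0,1,0),(1,1,0,0)$ (which are intrinsic to the covering, being the images of the $A_3$-singularities of $S$) is forced to lie in the dihedral group of order $8$ generated by $x_0\leftrightarrow x_3$, $x_1\leftrightarrow x_2$ and $(x_0,x_1,x_2,x_3)\mapsto(x_1,x_0,x_3,x_2)$; the torus part is killed because it moves the tangent points. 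Since $\tau$ itself lies in this group, it contributes only a Klein four-group of automorphisms of $X$, and even adding the deck involution of the Artin--Schreier cover you stay inside a $2$-group, so the $3$-cycles of $\mathfrak{S}_4$ are never obtained this way. To realize all of $\Aut(\Gamma)$ one needs automorphisms not visible as linear symmetries of this quadric model -- this is what the paper means by running ``the same argument as in the proof of \cite{Ko}, Theorem 3.2.1,'' where the generators are produced from the geometry of the elliptic fibrations on $X$ (e.g.\ involutions attached to fibrations and their sections), and one must check that those constructions still work in characteristic $2$. As written, your proof establishes only $\Aut(X)\subset\mathfrak{S}_4$ together with a $2$-subgroup of automorphisms, not the isomorphism $\Aut(X)\cong\mathfrak{S}_4$.
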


\begin{theorem}\label{non-existII}
There are no
classical and supersingular Enriques surfaces with the dual graph of type ${\rm II}$. 
\end{theorem}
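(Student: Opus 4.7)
The plan is to argue exactly as in the proof of Theorem \ref{non-existI}, exhibiting an elliptic fibration forced on any Enriques surface whose dual graph is of type $\II$ whose multiple fiber is of multiplicative type, and invoking Proposition \ref{multi-fiber}.

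First, I would inspect the dual graph of type $\II$ (as described in the proof of Theorem \ref{IIthm}; cf.\ \cite{Ko}, Figure 2.4) and locate a connected parabolic subdiagram of type $\tilde{A}_3$ sitting inside it. By Proposition \ref{singular-fiber}, such a subdiagram determines an elliptic fibration $\pi : X \to {\bf P}^1$ whose reducible singular fibers are of type $(\I_1^*, \I_4)$, since this is the unique entry in the classification of Proposition \ref{singular-fiber} containing an $\tilde{A}_3$. The construction of $\pi$ is via the half-pencil $|2F|$, where $F$ is the effective divisor of Kodaira type $\I_4$ read off from the four curves of the $\tilde{A}_3$ subdiagram.

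Second, I would locate in the dual graph a nonsingular rational curve meeting $F$ with multiplicity $1$; such a curve exists because the twelve rational curves in Figure 2.4 of \cite{Ko} include curves outside the chosen $\tilde{A}_3$ intersecting it in a single component transversely. Such a curve is automatically a $2$-section of $\pi$, and its presence forces $F$ to appear as a multiple fiber $2F$ of $\pi$, since a $2$-section meets a non-multiple fiber with total intersection number $2$ distributed so that the resulting elliptic fibration structure matches the dual graph.

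Finally, since the singular fiber of type $\I_4$ is of multiplicative type, Proposition \ref{multi-fiber}(i) rules out the existence of such a multiple fiber on a classical Enriques surface (whose multiple fibers are either ordinary elliptic curves or of additive type) and Proposition \ref{multi-fiber}(iii) rules it out on a supersingular Enriques surface (whose wild multiple fiber is either a supersingular elliptic curve or of additive type). This contradiction completes the proof. The main obstacle is the combinatorial verification inside the dual graph of type $\II$ that one indeed obtains the fibration of type $(\I_1^*, \I_4)$ with the $\I_4$-fiber being the multiple one; this is a direct inspection of \cite{Ko}, Figure 2.4, entirely parallel to the argument of Theorem \ref{non-existI}.
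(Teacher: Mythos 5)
Your proposal is correct and follows essentially the same route as the paper: the paper's (very terse) proof likewise reads off from the type $\II$ graph an elliptic fibration with a multiple fiber of type $\I_4$ and concludes by Proposition \ref{multi-fiber}, exactly as in its type $\VII$ argument via an $\tilde{A}_3$ half-fiber and a $2$-section. Your extra details (uniqueness of the entry $(\I_1^*,\I_4)$ in Proposition \ref{singular-fiber} and the $2$-section forcing the $\I_4$ configuration to be the half-fiber) are a faithful expansion of what the paper leaves implicit.
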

\begin{proof}
From the dual graph of type $\II$, we can see that such Enriques surface has an elliptic fibration with
a multiple fiber of type $\I_4$.  The assersion now follows from Proposition \ref{multi-fiber}.
\end{proof}

\subsection{Type $\VI$}\label{type6}
Over the field of complex numbers, the following example was studied 
by Dardanelli and van Geemen \cite{DvG}, Remark 2.4.  This surface $X$ is isomorphic to 
the Enriques surface of type $\VI$ given in \cite{Ko} (In \cite{DvG}, Remark 2.4, they claimed that $X$ is of type $\IV$, but this is a misprint).  Their construction 
works well in characteristic 2.

Let $(x_1,\cdots , x_5)$ be a homogeneous coodinate of ${\bf P}^4$.
Consider the surface $S$ in ${\bf P}^4$ defined by

\begin{equation}
\sum_{i=1}^{5} x_i = \sum_{i=1}^5 {1/x_i} = 0.
\end{equation}

\noindent
Let 
$$\ell_{ij} : x_i=x_j=0 \ \ (1\leq i<j \leq 5),$$ 
$$p_{ijk} : x_i=x_j=x_k=0 \ \ (1\leq i<j<k\leq 5).$$
The ten lines $\ell_{ij}$ and ten points $p_{ijk}$ lie on $S$.
By taking partial derivatives, we see that $S$ has ten nodes at $p_{ijk}$.  Let $Y$ be the minimal
nonsingular model of $S$.  Then $Y$ is a $K3$ surface. 
Denote by $L_{ij}$ the proper transform of $\ell_{ij}$ and by $E_{ijk}$ the exceptional curve over $p_{ijk}$.  The Cremonat transformation
$$(x_i) \to \left({1/x_i}\right)$$
acts on $Y$ as an automorphism $\sigma$ of order 2.  Note that the fixed point set of
the Cremonat transformation is 
exactly one point $(1,1,1,1,1)$.  Hence $\sigma$ is a fixed point free involution of $Y$.
The quotient surface $X=Y/\langle \sigma \rangle$ is a singular Enriques surface. 
Obviously the permutation group $\mathfrak{S}_5$ acts on $S$ which commutes with $\sigma$.  Therefore $\mathfrak{S}_5$ acts on $X$ as automorphisms. The involution $\sigma$ changes $L_{ij}$ and $E_{klm}$, where $\{i,j,k,l,m\} =\{1,2,3,4,5\}$.
The images of twenty nonsingular rational curves $L_{ij}$, $E_{ijk}$ give ten nonsingular rational curves on $X$ whose
dual graph is given by the following Figure \ref{petersen}. Note that this graph is 
well known called the Petersen graph.

\begin{figure}[htbp]
 \begin{center}
  \includegraphics[width=60mm]{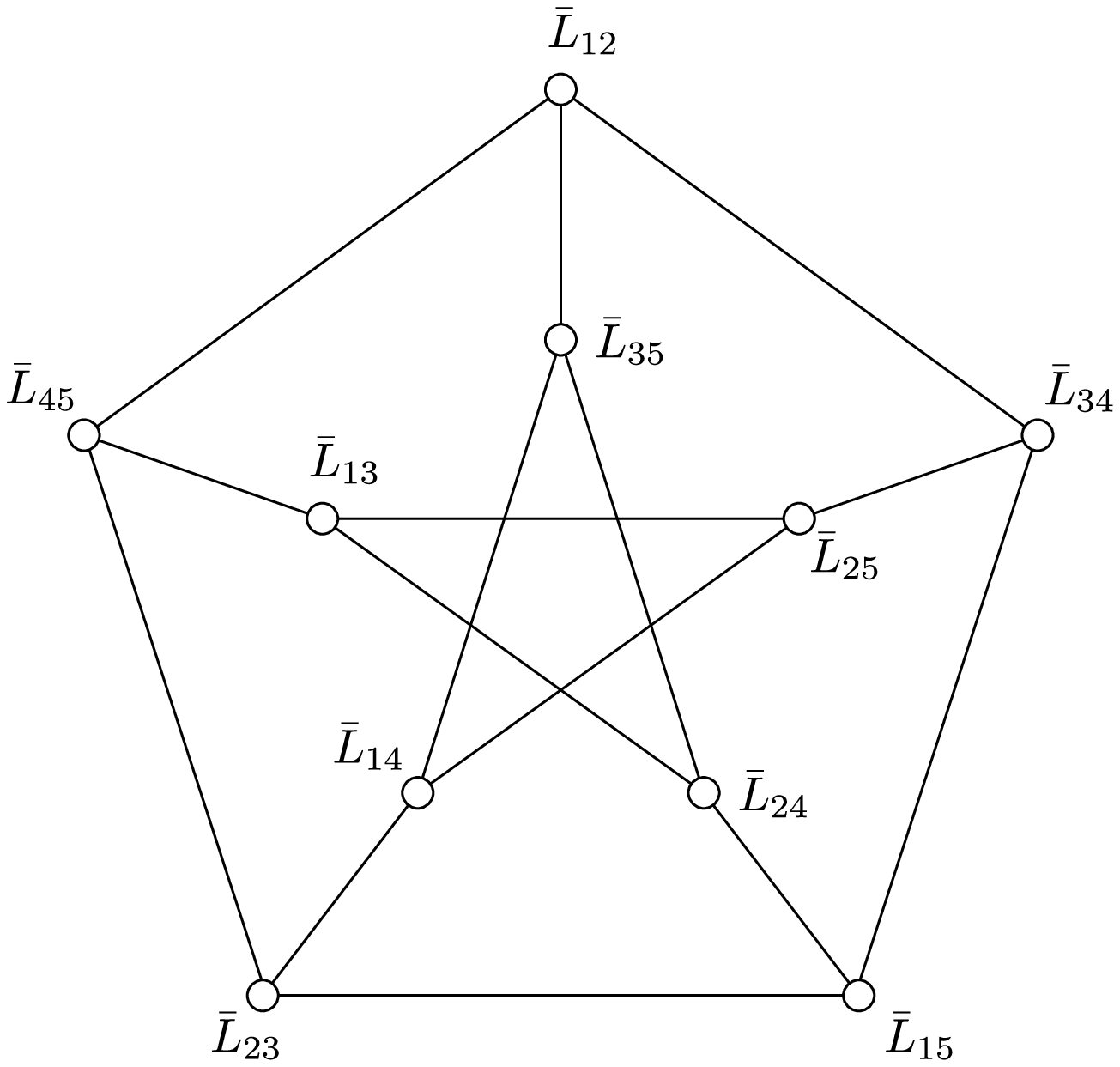}
 \end{center}
 \caption{}
 \label{petersen}
\end{figure}
\noindent
Here $\bar{L}_{ij}$ is the image of $L_{ij}$ (and $E_{klm}$).
Note that $\mathfrak{S}_5$ is the automorphism group of the Petersen graph.

The hyperplane section $x_i+x_j=0$ on $S$ is the union of the double line $2\ell_{ij}$ and two lines through $p_{klm}$ defined by $x_kx_l+x_kx_m+x_lx_m=0$.  Thus we have
additional twenty nodal curves on $Y$.  
Note that the Cremonat transformation changes
two lines defined by $x_kx_l+x_kx_m+x_lx_m=0$.  
Thus $X$ contains twenty nonsingular rational curves
whose dual graph $\Gamma$ coincides with the one
of the Enriques surface of type {\rm VI} (see Fig.6.4 in \cite{Ko}).  
It now follows from Proposition  \ref{finiteness} that
the automorphism group $\Aut(X)$ is finite.  
The same argument as in the proof of \cite{Ko}, Theorem 3.1.1 shows 
that $X$ contains 
only these 20 nonsingular rational curves.  
By a similar argument to the one in the proof of Lemma \ref{injectiv}, we see that 
the map $\rho : \Aut(X) \to \O(\Num(X))$ is injective. 
Since the classes of twenty nonsingular rational curves generate $\Num(X)\otimes {\bf Q}$,  
$\Aut(X)$ is isomorphic to $\Aut(\Gamma) \cong \mathfrak{S}_5$.
Thus we have the following theorem.

\begin{theorem}\label{VIthm}
The surface $X$ is a singular Enriques surfaces whose dual graph of nonsingular rational curves is of type ${\rm VI}$.  The automorphism group $\Aut(X)$ is isomorphic to the symmetric group $\mathfrak{S}_5$ of degree five.
\end{theorem}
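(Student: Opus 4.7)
The plan is to assemble the facts developed in the paragraphs preceding the theorem into the three assertions: that $X$ is singular, that its twenty nodal curves are the complete list, with dual graph of type $\VI$, and that $\Aut(X)\cong \mathfrak{S}_5$. Almost all of the geometry is already in place; what remains is to verify three independent points, each largely by reference to the complex treatment in \cite{Ko}.

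First I would dispatch the ``singular'' claim. Since $Y$ is a $K3$ surface (canonical bundle formula applied to the small resolution of the ten nodes of $S$) and the Cremona involution $\sigma$ is fixed-point-free on $Y$ (its only fixed point $(1{:}1{:}1{:}1{:}1)$ lies away from the ten nodes and from the base locus of the resolution), the quotient $\pi : Y\to X$ is étale of degree $2$. Hence the canonical cover of $X$ is the étale $\bbZ/2\bbZ$-cover $\pi$, so $X$ is a singular Enriques surface in the sense of Bombieri--Mumford. Along the way, I would verify that the $20+20=40$ rational curves $L_{ij}, E_{ijk}$ and the two line components of each hyperplane section $x_i+x_j=0$ are pairwise exchanged (or fixed setwise without fixed points) by $\sigma$, yielding exactly $10+10=20$ nonsingular rational curves on $X$.

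Second, I would compute the intersection behavior of these twenty curves on $X$. The ten images $\bar L_{ij}$ form the Petersen graph by construction of $S\subset \bbP^4$ and the identification with $E_{klm}$ under $\sigma$. For the other ten curves, coming from the pairs of lines in each hyperplane $x_i+x_j=0$, the incidences with $\bar L_{ab}$ and among themselves are determined by their equations $x_kx_l+x_kx_m+x_lx_m=0$ on $Q\cap\{x_i+x_j=0\}$, and a direct check reproduces the dual graph of Figure~6.4 of \cite{Ko}, i.e.\ type $\VI$. At this point I would invoke Vinberg's criterion (Proposition \ref{Vinberg}): the maximal parabolic subdiagrams in $\Gamma$ are the ones exhibited for type $\VI$ in \cite{Ko}, and each has rank $8$, so $W(\Gamma)$ has finite index in $\O(\Num(X))$, and by Proposition \ref{finiteness} the group $\Aut(X)$ is finite. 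The same chamber-counting argument of \cite{Ko}, Theorem~3.1.1 then shows that these twenty curves exhaust the nodal curves on $X$, since any other $(-2)$-curve would yield a reflection chamber distinct from the one cut out by $\Gamma$.

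Third, for the automorphism group, the inclusion $\mathfrak{S}_5\hookrightarrow \Aut(X)$ is immediate from the $\mathfrak{S}_5$-action on $\bbP^4$ by coordinate permutation, which preserves $S$ and commutes with the Cremona involution. For the reverse inclusion, I would first show that $\rho:\Aut(X)\to \O(\Num(X))$ is injective: any element of $\Ker\rho$ preserves each of the twenty nodal curves, each meets sufficiently many others (at least three transverse incidence points) to be fixed pointwise, and then genericity of a fiber in, say, an elliptic fibration associated with a parabolic subdiagram of $\Gamma$ forces the identity, as in Lemma \ref{injectiv}. Because the classes of the twenty $(-2)$-curves span $\Num(X)\otimes \bbQ$, the induced action of $\Aut(X)$ on $\Gamma$ is faithful, so $\Aut(X)\hookrightarrow \Aut(\Gamma)\cong \mathfrak{S}_5$, giving equality. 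The main delicate point, and the one I expect to be the chief obstacle, is verifying that no nodal curves on $X$ arise besides the twenty exhibited; this is the step that rules out a larger automorphism group and is the content of the Vinberg/chamber argument carried over from \cite{Ko}.
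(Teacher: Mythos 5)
Your proposal follows the paper's own route essentially step for step: free involution quotient of the $K3$ surface $Y$ gives a singular Enriques surface, the forty rational curves on $Y$ descend to twenty curves on $X$ whose dual graph is Figure 6.4 of \cite{Ko}, Vinberg's criterion (Proposition \ref{Vinberg}) plus the chamber argument of \cite{Ko}, Theorem 3.1.1 give finiteness of $\Aut(X)$ and the completeness of the list of nodal curves, and injectivity of $\rho$ (as in Lemma \ref{injectiv}) together with the fact that the twenty classes span $\Num(X)\otimes{\bf Q}$ identifies $\Aut(X)$ with $\Aut(\Gamma)\cong\mathfrak{S}_5$, the reverse inclusion coming from the coordinate permutations.

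There is, however, one genuine flaw, and it sits at the only characteristic-$2$-sensitive point of the construction: your justification that $\sigma$ is fixed point free on $Y$. You argue that the unique fixed point $(1{:}1{:}1{:}1{:}1)$ of the Cremona involution ``lies away from the ten nodes and from the base locus of the resolution.'' That by itself proves nothing in the needed direction: if this point lay on $S$ but away from the nodes, it would survive untouched on the resolution $Y$ and would be a fixed point of $\sigma$, so the quotient would fail to be an Enriques surface. What must be checked --- and what the paper's ``Hence'' tacitly uses --- is that $(1{:}1{:}1{:}1{:}1)$ does not lie on $S$ at all: in characteristic $2$ one has $\sum_i x_i=5=1\neq 0$ (and likewise $\sum_i 1/x_i=1\neq 0$) at that point. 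One should also observe that no nodal curve is $\sigma$-invariant (indeed $\sigma$ exchanges $L_{ij}$ with $E_{klm}$ and swaps the two extra lines in each hyperplane $x_i+x_j=0$; an invariant rational curve would in any case carry a fixed point, since every involution of ${\bf P}^1$ in characteristic $2$ has one), which is what yields exactly twenty curves downstairs. That this verification cannot be waved away is shown by the paper's Remark \ref{type7}: for the analogous quintic model of type ${\rm VII}$ the point $(1,1,1,1,1)$ \emph{does} lie on the surface in characteristic $2$ and the quotient is not an Enriques surface. With the fixed-point check stated correctly (and ``small resolution'' replaced by the minimal resolution of the ten rational double points), the rest of your argument goes through exactly as in the paper.
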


\begin{theorem}\label{non-existVI}
There are no
classical and supersingular Enriques surfaces with the dual graph of type ${\rm VI}$. 
\end{theorem}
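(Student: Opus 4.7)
The plan is to mimic the proofs of Theorems \ref{non-existI} and \ref{non-existII}: read off from the dual graph of type $\VI$ (Fig.~6.4 in \cite{Ko}) a parabolic subdiagram of rank $8$ that corresponds to an elliptic fibration whose half-fiber is a singular fiber of multiplicative type (some $\tilde{A}_n$), and then invoke Proposition \ref{multi-fiber} to rule out the classical and supersingular cases.

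Concretely, I would first list the parabolic subdiagrams of rank $8$ in the dual graph $\Gamma$ of type $\VI$; these give all elliptic fibrations $g\colon X\to \bbP^1$ on $X$ (via the divisor class $2F$ obtained from an $\tilde{A}_n$, $\tilde{D}_n$ or $\tilde{E}_k$ component). For each such fibration, I would then inspect $\Gamma$ to decide whether the fiber in question is a simple fiber or a multiple fiber: a nodal curve meeting $F$ with multiplicity $1$ is a section, while if every nodal curve of $\Gamma$ meets $F$ with even multiplicity, then $F$ is forced to be a half-fiber, so the true fiber of $g$ is $2F$, a multiple fiber. Among these fibrations I expect to locate one (e.g.\ a $\tilde{A}_n$ component, paralleling the $\I_8$ and $\I_4$ cases used for types $\I$ and $\II$) whose half-fiber is of multiplicative type; combined with Proposition \ref{singular-fiber} this half-fiber must be some $\I_m$ with $m\ge 1$.

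Once such a half-fiber of type $\I_m$ is exhibited, Proposition \ref{multi-fiber} finishes the job: part (i) allows classical Enriques surfaces to carry only tame multiple fibers that are either smooth ordinary elliptic curves or fibers of additive type, and part (iii) allows supersingular Enriques surfaces only wild multiple fibers which are supersingular elliptic curves or singular fibers of additive type. A multiple fiber of multiplicative type is excluded in both cases, giving a contradiction with the assumption that $X$ is classical or supersingular.

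The genuine work here is combinatorial rather than conceptual: identifying the correct parabolic subdiagram in Fig.~6.4 of \cite{Ko} and certifying, by reading intersection numbers off $\Gamma$, that no curve of $\Gamma$ serves as a $1$-section of the chosen fibration, so the half-fiber interpretation is forced. This is the step I expect to be the main obstacle, though it is routine once the figure is drawn out; an \emph{a priori} check that every vertex of $\Gamma$ meets the candidate fiber class with even multiplicity, using the incidence structure coming from the $\mathfrak{S}_5$-action on the Petersen graph and the associated $20$ nodal curves, suffices.
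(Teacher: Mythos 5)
Your overall strategy (locate, in the type $\VI$ dual graph, an elliptic fibration whose multiple fiber is of multiplicative type and then invoke Proposition \ref{multi-fiber}) is the same as the paper's, but the parity criterion you propose for certifying the multiple fiber is inverted, and as stated the argument fails at exactly the step you yourself flag as the crucial one. On an Enriques surface every elliptic fibration has a multiple fiber $2G$ (Proposition \ref{multi-fiber}), hence no sections: a section $R$ would give $1=R\cdot 2G=2(R\cdot G)$, which is impossible. Consequently, if a nodal curve meets a candidate divisor $F$ of canonical type with multiplicity $1$, the correct conclusion is not that this curve ``is a section'' but that $F$ cannot be a whole fiber, so $F$ is a half-fiber and the curve is a $2$-section; this is precisely how the paper argues in the type $\VII$ case (Theorem \ref{non-existVII}, where $E_3$ is observed to be a $2$-section). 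Conversely, your all-even test proves nothing: a simple fiber is numerically equivalent to twice the half-fiber, so it automatically meets every curve with even multiplicity; evenness is therefore consistent with both possibilities and cannot ``force'' the half-fiber interpretation.

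With your criterion you would in fact reject the divisor that does the job. The paper takes a pentagon in the Petersen graph, e.g. $\bar{L}_{12}+\bar{L}_{34}+\bar{L}_{15}+\bar{L}_{24}+\bar{L}_{35}$ (Figure \ref{petersen}); each of the five remaining Petersen curves meets this pentagon in exactly one point, and by the no-section argument this odd intersection forces the pentagon to be a half-fiber, necessarily of type $\I_5$, i.e. multiplicative, contradicting Proposition \ref{multi-fiber} (i) and (iii) for classical and supersingular surfaces. Under your parity rule these five curves would be read as ``sections'', so you would either draw the wrong conclusion about this fibration or go searching for an all-even candidate whose half-fiber status you could not justify. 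The fix is concrete: replace ``every curve meets $F$ evenly $\Rightarrow$ $F$ is a half-fiber'' by ``some curve meets $F$ with odd multiplicity $\Rightarrow$ $F$ is a half-fiber''; with that corrected criterion the pentagon immediately supplies the required multiplicative multiple fiber and the rest of your argument goes through as in the paper.
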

\begin{proof}
A pentagon in the Figure \ref{petersen}, for example, 
$|\bar{L}_{12} + \bar{L}_{34}+ \bar{L}_{15}+ \bar{L}_{24}+\bar{L}_{35}|$, 
defines an elliptic fibration on $X$.
The multiple fiber of this fibration is nothing but the pentagon, that is, of type 
$\I_5$.
The assertion now follows from Proposition \ref{multi-fiber}.
\end{proof}

\begin{remark}\label{type7}
Over the field of complex numbers, Ohashi found that the Enriques surface 
of type $\VII$ in \cite{Ko} is
isomorphic to the following surface (see \cite{MO}, \S 1.2). 
Let $(x_1,\cdots , x_5)$ be homogeneous coodinates of ${\bf P}^4$.
Consider the surface in ${\bf P}^4$ defined by

\begin{equation}
\sum_{i< j} x_i x_j = \sum_{i<j<k} x_ix_jx_k = 0
\end{equation}
which has five nodes at coodinate points and whose minimal resolution is a $K3$ surface $Y$.
The standard Cremonat transformation
$$(x_i) \to \left({1/ x_i}\right)$$
acts on $Y$ as a fixed point free involution $\sigma$.  Thus 
the quotient surface $X=Y/\langle \sigma \rangle$ is a complex Enriques surface.  
In characteristic 2, the involution $\sigma$ has a fixed point $(1,1,1,1,1)$ on $Y$, and
hence the quotient is not an Enriques surface.
\end{remark}

\subsection{Type $\III, \IV, \V$}\label{type345}
In each case of type $\III$, $\IV$, $\V$, from the dual graph (cf. Kondo \cite{Ko},  Figures 3.5, 4.4, 5.5)
we can find an elliptic fibration
which has two reducible multiples fibers.  In fact, the parabolic subdiagram
of type $\tilde{D}_6 \oplus \tilde{A}_1 \oplus \tilde{A}_1$ in case $\III$ 
(of type $\tilde{A}_3 \oplus \tilde{A}_3 \oplus \tilde{A}_1 \oplus \tilde{A}_1$ in case $\IV$, of type $\tilde{A}_5 \oplus \tilde{A}_2 \oplus \tilde{A}_1$ in case $\V$) defines such an elliptic fibration 
(see \cite{Ko}, Table 2, page 274). Hence if an Enriques
surface with the same dual graph of nodal curves exists in characteristic 2,
then it should be classical (Proposition \ref{multi-fiber}).  On the other hand, in each case of type $\III$, $\IV$, $\V$,
there exists an elliptic fibration which has a reducible multiple fiber 
of multiplicative type  (see \cite{Ko}, Table 2, page 274).  
However this is impossible 
because any multiple fiber of an elliptic fibration on a classical Enriques surface
is nonsingular or singular of additive type (Proposition \ref{multi-fiber}).  
Thus we have prove the following theorem.

\begin{theorem}\label{non-existIII}
There are no Enriques surfaces with the same dual graph as
in case of type ${\rm III}$, ${\rm IV}$ or ${\rm V}$.  
\end{theorem}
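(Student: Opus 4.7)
The plan is to derive a contradiction from Proposition \ref{multi-fiber} by exhibiting, in each of the three dual graphs, two different elliptic fibrations whose multiple fiber data are mutually incompatible. Concretely, in each case I will locate two parabolic subdiagrams in the dual graph (which correspond, via the genus-one pencil attached to each half-fiber class, to two different elliptic fibrations), and then compare the resulting multiple-fiber behavior with the trichotomy singular/classical/supersingular in Proposition \ref{multi-fiber}.

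First I would search, for each of the types $\III$, $\IV$, $\V$, for a parabolic subdiagram whose connected components are the dual graphs of two disjoint reducible multiple fibers. By Kond\=o's tables (\cite{Ko}, Table 2, p.~274), such subdiagrams are $\tilde{D}_6 \oplus \tilde{A}_1 \oplus \tilde{A}_1$ for type $\III$, $\tilde{A}_3 \oplus \tilde{A}_3 \oplus \tilde{A}_1 \oplus \tilde{A}_1$ for type $\IV$, and $\tilde{A}_5 \oplus \tilde{A}_2 \oplus \tilde{A}_1$ for type $\V$. In each case the configuration of the remaining nodal curves as sections/multi-sections forces two of the singular fibers themselves to be the two multiple fibers of the fibration. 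Since only classical Enriques surfaces admit two multiple fibers (Proposition \ref{multi-fiber}), any Enriques surface with such a dual graph must be classical.

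Next I would produce, in each of the three dual graphs, a second parabolic subdiagram whose associated elliptic fibration has a reducible multiple fiber of \emph{multiplicative} type (i.e.\ a singular fiber of type $\I_n$ with $n\geq 1$ appearing as a half-fiber). Again these fibrations can be read off from \cite{Ko}, Table 2. But on a classical Enriques surface, Proposition \ref{multi-fiber}(i) asserts that every multiple fiber is either a smooth ordinary elliptic curve or a singular fiber of \emph{additive} type; a multiplicative type multiple fiber is forbidden. This contradicts the conclusion of the previous step, and rules out the existence of any Enriques surface with dual graph of type $\III$, $\IV$, or $\V$ in characteristic $2$.

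The only real work is the bookkeeping: checking for each of the three graphs that the claimed parabolic subdiagrams do exist and, more importantly, that the designated components really occur as half-fibers rather than as ordinary fibers. The first point is purely combinatorial (inspect the figures in \cite{Ko}). The second point follows from the fact that every nodal curve on the putative Enriques surface $S$ appears in the dual graph, so any nodal curve meeting a fiber $F$ exactly once is a section, forcing $F$ to be a non-multiple fiber; conversely, if the complement of $F$ in the dual graph contains no such curve, then $F$ must be a half-fiber. This intersection-number check, applied to the parabolic subdiagrams listed above, is the only potentially delicate step, but it is already implicit in the classification of fibrations on these Enriques surfaces over $\mathbb{C}$ recorded in \cite{Ko}, Table 2.
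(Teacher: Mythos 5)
Your overall strategy is exactly the paper's: use the parabolic subdiagrams $\tilde{D}_6\oplus\tilde{A}_1\oplus\tilde{A}_1$, $\tilde{A}_3\oplus\tilde{A}_3\oplus\tilde{A}_1\oplus\tilde{A}_1$ and $\tilde{A}_5\oplus\tilde{A}_2\oplus\tilde{A}_1$ from \cite{Ko}, Table 2 to exhibit a fibration with two reducible multiple fibers, conclude from Proposition \ref{multi-fiber} that the surface would have to be classical, and then contradict Proposition \ref{multi-fiber}(i) with a second fibration having a reducible multiple fiber of multiplicative type. This is the same decomposition and the same contradiction as in the paper.

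The one genuine problem is that the criterion you propose for the step you yourself call delicate --- deciding which configurations are half-fibers --- is inverted. In characteristic $2$ every elliptic fibration on an Enriques surface has at least one multiple fiber $2G$ (Proposition \ref{multi-fiber}), so the class of a whole fiber is numerically $2G$ and meets every curve with even multiplicity; in particular the fibration admits no sections at all. Hence if a nodal curve of the graph meets a fiber-type configuration $D$ exactly once, the correct conclusion is not that $D$ is a non-multiple fiber, but that $D$ cannot be a whole fiber and therefore \emph{is} a half-fiber, the curve being a 2-section. This is precisely how the paper argues in the proof of Theorem \ref{non-existVII}, where the type ${\rm IV}$ fiber $E_1+E_2+E_{14}$ is recognized as multiple because $E_3$ is a 2-section meeting it once. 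Your converse statement (``no curve meets $D$ once, hence $D$ is a half-fiber'') is also unfounded, since a whole fiber meets every curve evenly, so evenness alone decides nothing. Applied literally, your bookkeeping rule would tag the wrong components as half-fibers in both steps of the argument; with the corrected rule (odd intersection with some nodal curve forces a half-fiber), the verification is purely numerical, carries over from Kond\=o's characteristic-$0$ tables as you intend, and the rest of your proof coincides with the paper's.
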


Combining Theorems \ref{main2}, \ref{non-existVII}, \ref{Ithm}, \ref{non-existI}, \ref{IIthm}, \ref{non-existII}, \ref{VIthm}, \ref{non-existVI}, \ref{non-existIII},
we have the Table \ref{Table1} in the introduction.

\begin{remark}\label{extra}
In characteristic 2, there exist Enriques surfaces with a finite group of automorphisms
whose dual graphs of all nonsingular rational curves do not appear 
in the case of complex surfaces.  For example, 
it is known that there exists an Enriques surface $X$ which has a genus 1 fibration
with a multiple singular fiber of type $\tilde{E}_8$ and with a 2-section 
(Ekedahl and Shepherd-Barron\cite{ES}, Theorem A, Salomonsson\cite{Sa}, Theorem 1).
We have ten nonsingular rational curves on $X$, that is, nine components of the singular fiber and
a 2-section, whose dual graph is given in Figure \ref{E10Dynkin}.

\begin{figure}[htbp]
\begin{center}
\begin{picture}(120,30)
\put(0, 20){\circle{5}}
\put(0, 30){\makebox(0, 0){}}
\put(2, 20){\line(1, 0){15}}
\put(20, 20){\circle{5}}
\put(20, 30){\makebox(0, 0){}}
\put(22, 20){\line(1, 0){15}}
\put(40, 20){\circle{5}}
\put(40, 30){\makebox(0, 0){}}
\put(42, 20){\line(1, 0){15}}
\put(60, 20){\circle{5}}
\put(60, 30){\makebox(0, 0){}}
\put(62, 20){\line(1, 0){15}}
\put(80, 20){\circle{5}}
\put(80, 30){\makebox(0, 0){}}
\put(82, 20){\line(1, 0){15}}
\put(100, 20){\circle{5}}
\put(100, 30){\makebox(0, 0){}}
\put(102, 20){\line(1, 0){15}}
\put(120, 20){\circle{5}}
\put(120, 30){\makebox(0, 0){}}
\put(122, 20){\line(1, 0){15}}
\put(140, 20){\circle{5}}
\put(140, 30){\makebox(0, 0){}}
\put(142, 20){\line(1, 0){15}}
\put(160, 20){\circle{5}}
\put(160, 30){\makebox(0, 0){}}
\put(40, 17){\line(0, -1){15}}
\put(40, 0){\circle{5}}
\put(50, 0){\makebox(0, 0){}}
\put(120, 0){\makebox(0, 0){}}
\end{picture}
 \caption{}
 \label{E10Dynkin}
\end{center}
\end{figure}

\noindent
It is easy to see that they generate $\Num(X) \cong U\oplus E_8$.  Moreover it is known that the reflection subgroup generated by reflections associated with these $(-2)$-vectors is of
finite index in $\O(\Num(X))$ (Vinberg \cite{V}, Table 4; also see Proposition \ref{Vinberg}) and hence $\Aut(X)$ is finite (Proposition \ref{finiteness}).
\end{remark}

\appendix
\section{The height of the covering $K3$ surfaces of singular Enriques surfaces}\label{height}

In this section we prove the following theorem.

\begin{theorem}\label{height}
In characteristic $2$, if $K3$ surface $Y$ has a fixed point free
involution, then the height $h(Y)$ of the formal Brauer group of $Y$
is equal to $1$. 
\end{theorem}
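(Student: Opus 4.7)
The plan is to exploit the \'etale quotient $\pi : Y \to X := Y/\langle\sigma\rangle$ and transfer the height-$1$ condition into a Frobenius calculation on the Enriques base. Since $\sigma$ is fixed-point-free, $\pi$ is \'etale of degree $2$; from $\pi^{*}\omega_{X}=\omega_{Y}=\calO_{Y}$, $\chi(\calO_{X})=\chi(\calO_{Y})/2=1$, and $b_{1}(X)\le b_{1}(Y)=0$, one checks that $X$ is an Enriques surface. Among the three Bombieri--Mumford types in characteristic $2$, only singular Enriques surfaces admit a nontrivial \'etale $\bbZ/2$-cover: the Artin--Schreier sequence $0\to\bbZ/2\to\calO_{X}\xrightarrow{F-1}\calO_{X}\to 0$ embeds $H^{1}_{\mathrm{et}}(X,\bbZ/2)$ into $\ker(F-1\mid H^{1}(X,\calO_{X}))$, which vanishes in the classical case ($H^{1}(X,\calO_{X})=0$) and in the supersingular case ($F=0$ on $H^{1}(X,\calO_{X})\cong k$, so $F-1=\mathrm{id}$ is injective). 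Therefore $X$ is singular and, by definition, $F$ acts bijectively on $H^{1}(X,\calO_{X})\cong k$.

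Next I analyze $\pi_{*}\calO_{Y}$. Locally $\pi$ is an Artin--Schreier extension $z^{2}+z=f$, and the change of coordinate $z\mapsto z+g$ with $g\in\calO_{X}$ leaves the image $\bar z$ invariant in the quotient. Hence the natural short exact sequence of $\calO_X$-modules
$$
0 \to \calO_{X} \to \pi_{*}\calO_{Y} \to L \to 0
$$
(with $\calO_{X}=(\pi_{*}\calO_{Y})^{\sigma}$ and $L$ the quotient) has $L\cong\calO_{X}$. Because $\sigma$ commutes with the absolute Frobenius, the subsheaf $\calO_{X}\subset\pi_{*}\calO_{Y}$ is $F$-stable, and the induced Frobenius on $L$ is the ordinary one (from $F(\bar z)=\overline{z^{2}}=\overline{z+f}=\bar z$ and semilinearity). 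So the whole sequence is Frobenius-equivariant.

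The associated long exact sequence of cohomology reads
$$
\cdots \to H^{1}(X,\calO_{X}) \to H^{1}(Y,\calO_{Y}) \to H^{1}(X,\calO_{X}) \xrightarrow{\partial} H^{2}(X,\calO_{X}) \to H^{2}(Y,\calO_{Y}) \to H^{2}(X,\calO_{X}) \to 0.
$$
With $H^{i}(X,\calO_{X})$ of dimensions $1,1,1$ (singular Enriques) and $H^{i}(Y,\calO_{Y})$ of dimensions $1,0,1$ ($Y$ a K3 surface), exactness forces both the connecting map $\partial$ and the last map $H^{2}(Y,\calO_{Y})\to H^{2}(X,\calO_{X})$ to be isomorphisms; both are Frobenius-equivariant. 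Combining with $F$ bijective on $H^{1}(X,\calO_{X})$ yields $F$ bijective on $H^{2}(Y,\calO_{Y})$, which by the Artin--Mazur characterization of ordinarity is equivalent to $h(Y)=1$. The main obstacle is pinning down $L\cong\calO_{X}$ and the $F$-equivariance of the extension, which rests on the Artin--Schreier description of \'etale $\bbZ/2$-covers in characteristic $2$; after that the proof reduces to a dimension chase.
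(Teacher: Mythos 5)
Your proposal is correct, but it takes a genuinely different route from the paper's. The paper argues by contradiction: if $h(Y)\neq 1$ then $F=0$ on $H^2(Y,\calO_Y)$, so ${\rm id}-F$ is bijective there; an induction on the length of Witt vectors plus the five lemma makes ${\rm id}-F$ bijective on $H^2(Y,W(\calO_Y))\otimes K$, whence $H^2_{\rm et}(Y,{\bf Q}_2)=0$, contradicting Crew's theorem that this space is one-dimensional for the $K3$ cover of a singular Enriques surface. You stay entirely within coherent cohomology: after showing that the quotient $X$ must be a \emph{singular} Enriques surface (a point the paper asserts without proof; your Artin--Schreier vanishing argument for the classical case, $H^1(X,\calO_X)=0$, and the supersingular case, $F=0$ so $F-1$ injective, is correct), you use the trace/Artin--Schreier structure of $\pi_*\calO_Y$ to get a Frobenius-equivariant extension $0\to\calO_X\to\pi_*\calO_Y\to\calO_X\to 0$, and the dimension chase using $H^1(Y,\calO_Y)=0$ transports the bijectivity of $F$ on $H^1(X,\calO_X)$ --- the defining property of a singular Enriques surface --- first to $H^2(X,\calO_X)$ via the connecting isomorphism and then to $H^2(Y,\calO_Y)$ via the trace isomorphism; both proofs then rest on the same Artin--Mazur fact that $h(Y)=1$ exactly when $F\neq 0$ on the one-dimensional space $H^2(Y,\calO_Y)$. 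Your route buys self-containedness (no Witt vector cohomology, no $2$-adic \'etale cohomology, no appeal to Crew), while the paper's route is shorter once Crew's computation is granted. The only steps worth writing out fully in your sketch are the global identification of the quotient sheaf with $\calO_X$ (cleanest via the trace map, whose kernel is the invariants in characteristic $2$, or via the gluing $z\mapsto z+g_{ij}$ as you indicate) and the verification that $b_2(X)=10$ so that $X$ is an Enriques surface in the sense used in the paper; both are routine.
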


\begin{corollary}\label{height2}
Let $Y$ be the covering $K3$ surface of a singular Enriques surface.  Then the height
$h(Y) = 1$.  
\end{corollary}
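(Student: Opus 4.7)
The plan is to reduce to a computation on the Enriques quotient $X := Y/\langle\sigma\rangle$ and then exhibit a non-zero Frobenius-fixed class in $H^2(Y, \calO_Y)$.

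First, since $\sigma$ is fixed-point-free, the quotient map $\pi : Y \to X$ is an \'etale double cover. From $\chi(\calO_X) = \chi(\calO_Y)/2 = 1$ and the \'etale adjunction $\pi^*\omega_X = \omega_Y = \calO_Y$, one sees that $X$ is an Enriques surface. The existence of a nontrivial \'etale $\bbZ/2\bbZ$-cover makes the \'etale fundamental group of $X$ nontrivial, ruling out the classical and supersingular types (which are simply connected in the \'etale topology). Hence $X$ is a singular Enriques surface, and via the Artin-Schreier sequence $0 \to \bbZ/2\bbZ \to \calO_X \xrightarrow{F-1} \calO_X \to 0$ the cover $\pi$ corresponds to a non-zero class $a \in H^1(X, \calO_X)$ satisfying $F^*(a) = a$.

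Next, I would analyze on $X$ the short exact sequence $0 \to \calO_X \to \pi_*\calO_Y \xrightarrow{1+\sigma} \calO_X \to 0$, whose extension class in $H^1(X, \calO_X)$ is precisely $a$. Consequently the connecting maps in the associated long exact sequence are cup product with $a$. Using $h^1(Y, \calO_Y) = 0$ (since $Y$ is K3) and $h^i(X, \calO_X) = 1$ for $i = 0, 1, 2$, a short dimension count forces $d_0 : H^0 \to H^1$ and $d_1 : H^1 \to H^2$ to be isomorphisms; in particular $a \cup a = d_1(a)$ generates $H^2(X, \calO_X) \cong k$. The tail of the sequence simultaneously furnishes an isomorphism $\tau : H^2(Y, \calO_Y) \xrightarrow{\sim} H^2(X, \calO_X)$ induced by the trace $1+\sigma$, and because $\sigma$ commutes with the absolute Frobenius, $\tau$ is $F^*$-equivariant.

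Finally, since $F^*$ is multiplicative with respect to cup product, $F^*(a \cup a) = F^*(a) \cup F^*(a) = a \cup a$. Hence $F^*$ fixes the generator of the one-dimensional $k$-space $H^2(X, \calO_X)$; being $\sigma$-linear it must be bijective. Pulling back via $\tau$, the absolute Frobenius on $H^2(Y, \calO_Y)$ is bijective, which is the standard criterion for $\widehat{\rm Br}(Y) \cong \widehat{\bbG}_m$, i.e.\ $h(Y) = 1$. Corollary~\ref{height2} follows immediately, since the canonical cover of a singular Enriques surface is an \'etale $\bbZ/2\bbZ$-cover by a K3 surface with fixed-point-free Galois involution.

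The main obstacle will be verifying the two Frobenius-equivariance statements in characteristic $2$: that the extension class of the sequence above is literally the Artin-Schreier class $a$, and that $\tau$ commutes with the absolute Frobenius. In characteristic $\neq 2$ both are immediate from the splitting $\pi_*\calO_Y = \calO_X \oplus \calL$, but in characteristic $2$ one must work locally using Artin-Schreier coordinates $y^2+y=f$ on \'etale charts, where the extension class appears explicitly as the \v{C}ech cocycle $\{y_i - y_j\}$ and the trace map commutes with $F$ since Frobenius is a ring homomorphism that commutes with $\sigma$.
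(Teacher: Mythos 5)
Your proof is correct, but it takes a genuinely different route from the paper's. The paper deduces the corollary from the more general theorem of Appendix A (any $K3$ surface with a fixed-point-free involution has height one), proved by contradiction: if $h(Y)\neq 1$, Frobenius is zero on $H^2(Y,\calO_Y)$, so ${\rm id}-F$ is bijective there; an induction through the Witt-vector sheaves $W_i(\calO_Y)$ and the 5-lemma makes ${\rm id}-F$ bijective on $H^2(Y,W(\calO_Y))\otimes K$, whence $H^2_{et}(Y,{\bf Q}_2)=0$, contradicting Crew's computation that this space is one-dimensional for the $K3$ cover of a singular Enriques surface. You instead stay at the level of coherent cohomology: the \'etale double cover gives a Frobenius-fixed class $a\in H^1(X,\calO_X)$, the extension $0\to\calO_X\to\pi_*\calO_Y\to\calO_X\to 0$ has class $a$, the vanishing of $H^1(Y,\calO_Y)$ forces the connecting maps (cup product with $a$) to be isomorphisms, so $a\cup a$ is a Frobenius-fixed generator of $H^2(X,\calO_X)$, and the trace isomorphism $H^2(Y,\calO_Y)\cong H^2(X,\calO_X)$ is Frobenius-equivariant since ${\rm Tr}(s^2)={\rm Tr}(s)^2$ in characteristic $2$; the two compatibilities you flag do hold, by exactly the local Artin-Schreier computation you indicate ($y_i=y_j+c_{ij}$ with $\{c_{ij}\}$ representing $a$, and $\sigma(y)=y+1$ so the quotient map of the extension is $1+\sigma$). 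Both arguments rest on the same Artin-Mazur criterion that $h(Y)=1$ if and only if $F\neq 0$ on $H^2(Y,\calO_Y)$; your route is more elementary and self-contained (no Witt vectors, no appeal to Crew's $2$-adic result) and yields the finer fact that Frobenius is bijective on $H^2(X,\calO_X)$ for a singular Enriques surface, while the paper's route concentrates the hard content in one citation and is stated directly in the stronger fixed-point-free-involution form -- though your opening reduction (a $K3$ quotient by a free involution is automatically a singular Enriques surface, since classical and supersingular ones admit no connected \'etale double cover by the Artin-Schreier sequence) recovers that generality as well.
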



\begin{proof}
Suppose $h = h(Y) \neq 1$. 
Since ${\rm H}^2(Y, {\calO}_{Y})$ is the tangent space of the formal Brauer group of $Y$
(cf. Artin-Mazur \cite{AM}, Corollary (2.4)), the Frobenius map 
$$
F : {\rm H}^2(Y, {\calO}_{Y}) \rightarrow {\rm H}^2(Y, {\calO}_{Y})
$$
is a zero map. Then, we have an isomorphism
$$
{\rm id} - F : {\rm H}^2(Y, {\calO}_{Y}) \rightarrow {\rm H}^2(Y, {\calO}_{Y}).
$$
Let $W_{i}({\calO}_{Y})$ be the sheaf of ring of Witt vectors of length $i$ on $Y$.
Assume ${\rm id} - F : {\rm H}^2(Y, W_{i-1}({\calO}_{Y})) \longrightarrow 
{\rm H}^2(Y, W_{i-1}({\calO}_{Y}))$ is an isomorphism.
We have an exact sequence
$$
0    \rightarrow   W_{i-1}({\calO}_{Y}) \stackrel{V}{\longrightarrow} W_{i}({\calO}_{Y}) \stackrel{R}{\longrightarrow} {\calO}_{Y}  \rightarrow
 0,
$$
where $V$ is the Verschiebung and $R$ is the restriction. Then, we have a diagram
{\small
$$
\begin{array}{ccccccccc}
0    & \rightarrow &  {\rm H}^2(Y, W_{i-1}({\calO}_{Y}))  & \stackrel{V}{\longrightarrow}& {\rm H}^2(Y, W_{i}({\calO}_{Y})) &\stackrel{R}{\longrightarrow} &  {\rm H}^2(Y, {\calO}_{Y}) & \rightarrow
& 0\\
  &   & {\rm id} - F \downarrow &   & {\rm id} - F \downarrow &  & {\rm id} - F \downarrow & & \\
0    & \rightarrow &  {\rm H}^2(Y, W_{i-1}({\calO}_{Y}))  & \stackrel{V}{\longrightarrow}& {\rm H}^2(Y, W_{i}({\calO}_{Y})) &\stackrel{R}{\longrightarrow} &  {\rm H}^2(Y, {\calO}_{Y}) & \rightarrow
& 0.
\end{array}
$$
}
By the assumption of induction, the first and the third downarrows are isomorphisms. 
Therefore, by the 5-lemma, we have an isomorphism
$$
{\rm id} - F : {\rm H}^2(Y,  W_{i}({\calO}_{Y})) \cong {\rm H}^2(Y,  W_i({\calO}_{Y})).
$$
Therefore, taking the projective limit, we have an isomorphism
$$
{\rm id} - F : {\rm H}^2(Y,  W({\calO}_{Y})) \cong {\rm H}^2(Y,  W({\calO}_{Y}))
$$
Therefore, denoting by $K$ the quotient field of the ring of Witt vectors $W(k)$ of infinite length,
we have an isomorphism
$$
{\rm id} - F : {\rm H}^2(Y,  W({\calO}_{Y}))\otimes K \cong {\rm H}^2(Y,  W({\calO}_{Y}))\otimes K.
$$
Let ${\rm H}_{et}^2(Y, {\bf Q}_2)$ be the second 2-adic \'etale cohomology of $Y$. Then,
we have an exact sequence
$$
0 \rightarrow {\rm H}_{et}^2(Y, {\bf Q}_2)\rightarrow {\rm H}^2(Y,  W({\calO}_{Y}))\otimes K \stackrel{{\rm id} - F}{\longrightarrow} {\rm H}^2(Y,  W({\calO}_{Y}))\otimes K \rightarrow 0
$$
(cf. Crew \cite{Cr}, (2.1.2) for instance). Therefore, we have ${\rm H}_{et}^2(Y, {\bf Q}_2)= 0$.
On the other hand, we consider the quotient surface $X$ of $Y$ 
by the fixed point free involution. Then, $X$ is a singular Enriques surface
and under the assumption Crew showed $\dim {\rm H}_{et}^2(Y, {\bf Q}_2)= 1$ 
for the $K3$ covering Y of $X$ (Crew \cite{Cr}, p41), a contradiction.
\end{proof}

\section{Enriques surfaces associated with Kummer surfaces}\label{kummer}

In this section we show that Enriques surfaces given in Remark \ref{typeInumtrivial} are
obtained from Kummer surfaces associated with the product of two ordinary elliptic curves.
Let $E, F$ be two ordinary elliptic curves and let
$\iota=\iota_E\times \iota_F$ be the inversion of the abelian surface $E\times F$.
Let $\Km(E\times F)$ be the minimal resolution of the quotient surface
$(E\times F)/\langle\iota\rangle$.  It is known that $\Km(E\times F)$ is a $K3$ surface called Kummer surface associated with $E\times F$ (Shioda \cite{Shi}, Proposition 1, see also Katsura \cite{Ka}, Theorem B). The projection from $E\times F$ to $E$ gives an elliptic fibration which has
two singular fibers of type $\I^*_4$ and two sections.

Let $a \in E, \ b\in F$ be the unique non-zero 2-torsion points on $E$, $F$ respectively.  Denote by $t$ the translation of $E\times F$ by the 2-torsion point $(a,b)$.
The involution $(\iota_E\times 1_F)\circ t = t \circ (\iota_E\times 1_F)$ induces a fixed
point free involution $\sigma$ of $\Km(E\times F)$.  Thus we have an Enriques surface
$S = \Km(E\times F)/\langle\sigma\rangle$.  The involution $\iota_E\times 1_F$ (or $t$) induces a numerically trivial involution $\eta$ of $S$.  

\begin{theorem}\label{nt}
The pair $(S, \eta)$ is isomorphic to an Enriques surface given in Remark {\rm \ref{typeInumtrivial}}.
\end{theorem}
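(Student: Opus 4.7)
The plan is to realize the double cover $\Km(E \times F) \to S$ as an instance of the cover $Y \to X$ in Remark \ref{typeInumtrivial}. Concretely, I will exhibit $\Km(E \times F)$ as an Artin--Schreier double cover of $Q = \mathbb{P}^1 \times \mathbb{P}^1$ branched along the quadrangle, and then identify the involution on $Q$ induced by translation with the involution $\tau$ of (\ref{typeIinv}).

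Since $E, F$ are ordinary, the quotient maps $\pi_E \colon E \to E/\iota_E \cong \mathbb{P}^1$ and $\pi_F \colon F \to F/\iota_F \cong \mathbb{P}^1$ are separable (Artin--Schreier) double covers. Their product is a $(\mathbb{Z}/2)^2$-Galois cover $E \times F \to Q = (E/\iota_E) \times (F/\iota_F)$, and the intermediate quotient by $\langle [-1] \rangle$ gives a separable double cover $\pi \colon \Km(E \times F) \to Q$ whose deck transformation is the involution $\widetilde{\iota_E \times 1_F}$ of the Kummer surface. Normalizing Weierstrass equations $y_E^2 + x_E y_E = f_E(x_E)$ and $y_F^2 + x_F y_F = f_F(x_F)$ so that the non-zero $2$-torsion points sit at $x_E = 0, x_F = 0$, a direct invariant-theoretic calculation shows that $k(\Km(E \times F))$ is generated over $k(Q) = k(x_E, x_F)$ by $w = x_F y_E + x_E y_F$ satisfying the Artin--Schreier equation
\[
w^2 + x_E x_F \cdot w = x_F^2 f_E(x_E) + x_E^2 f_F(x_F).
\]
Homogenizing via the embedding $Q \subset \mathbb{P}^3$ of (\ref{type1quadric}), this presents $\pi$ as an Artin--Schreier cover with defining triple $(L, \alpha, \beta)$ where $L = \calO_Q(2,2)$ and $Z(\alpha)$ is the quadrangle $L_{01} + L_{02} + L_{13} + L_{23}$; after applying the AS-equivalence $\beta \sim \beta + g^2 + \alpha g$ one arranges that $Z(\beta)$ is the quadrangle plus an additional $\tau$-invariant $(2,2)$-curve, as in Remark \ref{typeInumtrivial}. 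The four $2$-torsion points of $E \times F$ map to the four vertices of the quadrangle, and the resolution of the singularities of $(E \times F)/[-1]$ at these points matches the resolution of the AS cover in Remark \ref{typeInumtrivial}.

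Translation $t_{(a, b)}$ commutes with $[-1]$ because $(a, b)$ is $2$-torsion, hence descends to the involution $\tilde t$ of $\Km(E \times F)$, and further to an involution $\bar t = \bar t_a \times \bar t_b$ of $Q$; each factor is an involution of $\mathbb{P}^1$ swapping the two branch points of the corresponding double cover. In characteristic $2$ every non-trivial involution of $\mathbb{P}^1$ is unipotent with a unique fixed point, so $\bar t$ has a single fixed point on $Q$. Choosing projective coordinates so that the quadrangle is cut out as in (\ref{type1quadric}), $\bar t$ is identified with the involution $\tau$ of (\ref{typeIinv}). The fixed-point-free involution $\sigma = \widetilde{\iota_E \times 1_F} \circ \tilde t$ then corresponds precisely to the lift of $\tau$ in Remark \ref{typeInumtrivial}, so $S = \Km(E \times F)/\langle \sigma \rangle$ is an Enriques surface of that family. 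Since $\widetilde{\iota_E \times 1_F}$ and $\tilde t$ commute, the deck involution of $\pi$ descends to an involution of $S$ which is precisely $\eta$; by the construction of Remark \ref{typeInumtrivial} this is the numerically trivial involution coming from the covering transformation.

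The main obstacle is verifying the AS-equivalence needed to rewrite $\beta$ in the specific form of Remark \ref{typeInumtrivial}, i.e.\ showing that the class $[\beta/\alpha^2]$ coming from the Kummer construction can be represented so that the extra component of $Z(\beta)$ is a $(2,2)$-curve invariant under $\tau = \bar t$. This reduces to a normal-form calculation showing that the Weierstrass moduli $a_{2,E}, a_{2,F}$ correspond to (and parameterize) the $\tau$-invariant pencil of $(2,2)$-curves through the four vertices of the quadrangle, matching the two-dimensional family structure on both sides.
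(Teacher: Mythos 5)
Your proposal is correct and follows essentially the same route as the paper: both write the Shioda-type Artin--Schreier equation $z^2+xx'z=x^2(x'^3+b'x')+x'^2(x^3+bx)$ for $\Km(E\times F)$ over ${\bf P}^1\times{\bf P}^1$, homogenize it on the quadric $Q\subset{\bf P}^3$ so that the branch data becomes the quadrangle plus a $\tau$-invariant $(2,2)$-curve through the four vertices, and then match the translation-induced involution and the deck transformation with the lifted fixed-point-free involution and the covering involution of Remark \ref{typeInumtrivial}. The only cosmetic difference is that the paper keeps the natural coordinates and uses the conjugate involution $\tau'(x_0,x_1,x_2,x_3)=(x_3,b'x_2,bx_1,bb'x_0)$ with fixed point $(1,b',b,bb')$ instead of rescaling to get $\tau$ literally, and the explicit substitution it carries out is precisely the ``normal-form'' step you defer.
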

\begin{proof}
Let
$$E: y^2+xy =x^3 +bx, \quad F: y'^2+x'y'=x'^3+b'x' \quad (b, b' \in k, bb'\not=0)$$
be two ordinary elliptic curves.  The inversion $\iota_E$ is then expressed by
$$(x,y) \to (x,y+x)$$
and the translation by the non-zero 2-torsion on $E$ is given by
$$(x,y) \to (b/x, by/x^2 + b/x).$$
Then the function field of $(E\times F)/\langle \iota \rangle$ is given by
$$k((E\times F)/\langle \iota\rangle) = k(x,x', z)$$
with the relation
\begin{equation}\label{shiodakummer}
z^2 + xx'z= x^2(x'^3+b'x')+x'^2(x^3+bx)
\end{equation}
where $z=xy'+x'y$ (see Shioda \cite{Shi}, the equation (8)). The fixed point free involution $\sigma$ 
is expressed by
\begin{equation}\label{enriques-inv}
\sigma (x,x',z) = (b/x, b'/x', bb'z/x^2x'^2 + bb'/xx'),
\end{equation}
and the involution induced by $\iota_E\times 1_F$ on $\Km(E\times F)$ is given by
\begin{equation}\label{num-tri-inv}
(x,x',z)\to (x,x', z+xx').
\end{equation}

On the other hand, we consider the quadric $Q$ given in (\ref{type1quadric}).
Instead of $\tau$ in (\ref{typeIinv}), we consider the involution given by
\begin{equation}
\tau' : (x_0,x_1,x_2,x_3) \to (x_3,b'x_2,bx_1,bb'x_0)
\end{equation}
whose fixed point is $(1,b',b,bb')$.  
The Artin-Schreier covering
is defined by the equation
$$z^2 + x_0x_3z = x_0x_3(x_1x_3 +b'x_0x_2+x_2x_3+bx_0x_1)$$
(in the example given in the subsection \ref{type1}, the term $\mu(x_0x_3)^2$ appears in the Artin-Schreier covering.
If $\mu\not=0$, then changing $z$ by $z+ax_0x_3$ where $a^2+a+\mu=0$, we can delete this term).
Now, by putting here 
$$x_0=u_0v_0, \ x_1=u_0v_1, \ x_2=u_1v_0, \ x_3=u_1v_1$$
and considering an affine locus $u_0\not=0, v_0\not=0$, we have
$$z^2 + u_1v_1z=u_1v_1(u_1v_1^2+ u_1^2v_1+ bv_1+b'u_1)$$
which is the same as the equation given in (\ref{shiodakummer}).
Moreover the lifting of $\tau'$ and the covering transformation of the Artin-Schreier covering coincide
with the ones given in (\ref{enriques-inv}) and (\ref{num-tri-inv}) respectively.
\end{proof}

\begin{remark}
Using appendix A, we see that the height of the formal Brauer group of Kummer surfaces associated with the product of two ordinary elliptic curves is equal to 1.
\end{remark}
\begin{remark}
All complex Enriques surfaces with cohomologically or numerically
trivial automorphisms are classified by Mukai and Namikawa \cite{MN}, Main theorem (0.1), and Mukai \cite{M}, Theorem 3.
There are three types: one of them is an Enriques surface associated with
$\Km(E\times F)$ and the second one is mentioned in Remark \ref{typeInumtrivial}. For the third one we refer the reader to Mukai \cite{M}, Theorem 3.  
In positive characteristic, Dolgachev (\cite{D2}, Theorems 4 and 6) determined the order of cohomologically or numerically trivial automorphisms.  However, the explicit classification is not known.  The above Theorem \ref{nt} implies that
two different type of complex Enriques surfaces with a numerically trivial involution coincide in characteristic 2.
\end{remark}

\end{document}